\newcommand{\dataFunction}{\MC{T}}
\newcommand{\forward}{{\boldsymbol q}}
\newcommand{\stforward}{\overline{\forward}}
\newcommand{\MC}{\mathcal}
\newcommand{\expectation}[1]{\mathrm{E}[#1]}
\newcommand{\tindx}{n}
\newcommand{\snapshotArg}[1]{\boldsymbol S(#1)}
\newcommand{\svdLeft}{\boldsymbol U}
\newcommand{\svdLeftVecArg}[1]{\boldsymbol u_{#1}}
\newcommand{\svdMid}{\boldsymbol \Sigma}
\newcommand{\svdRight}{\boldsymbol V^T}
\newcommand{\spatialvar}{\mathsf x}
\newcommand{\spatialvarTwo}{\mathsf y}
\newcommand{\physDomain}{\Omega}
\newcommand{\depth}{d}
\newcommand{\gpCovarianceAll}{\bar{\boldsymbol \Sigma}^n(\parametervecTest)}
\newcommand{\gpCovarianceTrain}{{\boldsymbol \Sigma_{\text{train}}^n}}
\newcommand{\gpCovarianceTest}{{\boldsymbol
\Sigma_{\text{test}}^n}(\parametervecTest)}
\newcommand{\gpNoise}{{\lambda}}
\newcommand{\gpLength}{{h}}
\newcommand{\velocityLF}{ \boldsymbol {f}_\text{LF} }
\newcommand{\velocityFOM}{ \boldsymbol f}
\newcommand{\parametervec}{\boldsymbol \mu}
\newcommand{\parametervecTest}{\boldsymbol \mu}
\newcommand{\parametervecTrainArg}[1]{\boldsymbol \mu_{#1}}
\newcommand{\parameter}{\mu}
\newcommand{\param}{\parametervec}
\newcommand{\nparams}{N_{\parametervec}}
\newcommand{\parameterdomain}{\MC{D}}
\newcommand{\parameterdomainROM}{\parameterdomain_\text{ROM}}
\newcommand{\parameterdomainDum}{\overline{\MC{D}}}
\newcommand{\parameterdomainTrain}{\parameterdomain_\text{train}}
\newcommand{\parameterdomainVal}{\parameterdomain_\text{val}}
\newcommand{\parameterdomainTest}{\parameterdomain_\text{test}}
\newcommand{\nfom}{N}
\newcommand{\statevecIntercept}{ {\boldsymbol x}_\text{ref} }
\newcommand{\statevec}{ \boldsymbol x }
\newcommand{\statevecHFM}{\statevec}
\newcommand{\genstatevecROM}{\hat{\boldsymbol {x}}}
\newcommand{\statevecROM}{\genstatevecROM}
\newcommand{\statevecLF}{\statevec_\text{LF}}
\newcommand{\statevecLFInit}{\statevec_\text{LF,0}}
\newcommand{\timeArg}[1]{t^{#1}}
\newcommand{\defeq}{\vcentcolon=}
\newcommand{\matshapea}{
\tikz[baseline=0.0ex]\draw (0,0) rectangle (20pt,2pt);}
\newcommand{\matshapebf}{
\tikz[baseline=0pt]\draw (0pt,0pt) rectangle (20pt,20pt);}
\newcommand{\matshapec}{
\tikz[baseline=0.0ex]\draw (0,0pt) rectangle (2pt,20pt);}
\newcommand{\basismat}{\boldsymbol {\Phi}}
\newcommand{\basismatVecArg}[1]{\boldsymbol {\phi}_{#1}}
\newcommand{\kroneckerVecArg}[1]{\boldsymbol {e}_{#1}}
\newcommand{\testmat}{\boldsymbol {\Psi}}
\newcommand{\dimrom}{K}
\newcommand{\dimLF}{{N_\text{LF}}}
\newcommand{\prolongate}{\mathbf{p}}
\newcommand{\sampMat}{\mathbf{P}}
\newcommand{\sampMatvec}[1]{\mathbf{p}_{#1}}
\newcommand{\nsample}{n_s}
\newcommand{\ntrainROM}{N_{\text{train,ROM}}}
\newcommand{\nskip}{N_{\text{skip}}}
\newcommand{\nparamTrain}{N_s}
\newcommand{\featurevec}{\boldsymbol{\rho}}
\newcommand{\feature}{\rho}
\newcommand{\featurenmu}{\boldsymbol \feature^\tindx(\parametervec)}
\newcommand{\approxregressionfunction}{ {\hat{f}}}
\newcommand{\latentstate}{\boldsymbol h}
\newcommand{\latentstatefnmu}{\latentstate^\tindx(\parametervec)}
\newcommand{\latentstatefn}{\latentstate^\tindx}
\newcommand{\latentstatefnmuArg}[1]{\latentstate^{#1}(\parametervec)}
\newcommand{\latentstatefnArg}[1]{\latentstate^{#1}}
\newcommand{\latentstatezero}{\latentstate_0}
\newcommand{\weights}{\boldsymbol{\theta}}
\newcommand{\weightsf}{\boldsymbol{\theta}_f}
\newcommand{\weightsg}{\boldsymbol{\theta}_{\boldsymbol g}}
\newcommand{\weightsmat}{\boldsymbol{\Theta}}
\newcommand{\regTermf}{\kappa_f}
\newcommand{\regTermg}{\kappa_{\boldsymbol g}}
\newcommand{\arxarga}{p}
\newcommand{\arxargb}{r}
\newcommand{\arxargs}{(\arxarga,\arxargb)}
\newcommand{\ARXfweightsOne}{\weights_1}
\newcommand{\ARXfweightsTwo}{\theta_2}
\newcommand{\cellstate}{\boldsymbol c}
\newcommand{\latentstateh}{\tilde{\latentstate}}
\newcommand{\bias}{\boldsymbol b}
\newcommand{\scalarbias}{ b}
\newcommand{\nneurons}{p}
\newcommand{\activation}{\boldsymbol z}
\newcommand{\activationf}{\boldsymbol \zeta}
\newcommand{\activationfscalar}{\zeta}
\newcommand{\activationRecurrent}{\boldsymbol r}
\newcommand{\weightsActivation}{\boldsymbol W}
\newcommand{\weightsActivationVec}{\boldsymbol w}
\newcommand{\nlayers}{\depth}
\newcommand{\npca}{n_r}
\newcommand{\dualweightedresidual}{d}
\newcommand{\paramtrainindex}{\MC{D}_{\text{train}}}
\newcommand{\trainSet}{\MC{T}_{\text{train}}}
\newcommand{\trainSetKNN}{\MC{T}_{\text{kNN}}}
\newcommand{\testSet}{\MC{T}_{\text{test}}}
\newcommand{\testSetNoise}{\MC{T}_{\text{test},\noise}}
\newcommand{\trainSetNoise}{\MC{T}_{\text{train},\noise}}
\newcommand{\valSet}{\MC{T}_{\text{val}}}
\newcommand{\residualTrainSet}{\MC{T}_{\residual,\text{train}}}
\newcommand{\residualSnapshotMatrix}{\boldsymbol S_{\residual}}
\newcommand{\paramtrainset}{\MC{D}_{\text{train}}}
\newcommand{\paramtrainsetNoise}{\MC{D}_{\text{train},\noise}}
\newcommand{\paramtestsetNoise}{\MC{D}_{\text{test},\noise}}
\newcommand{\noise}{\epsilon}
\newcommand{\noiseDummy}{\epsilon}
\newcommand{\noisenArg}[1]{\epsilon^{#1}}
\newcommand{\approxnoisefunction}{\hat \epsilon}
\newcommand{\residual}{\boldsymbol r}
\newcommand{\residualLF}{\boldsymbol r_\text{LF}}
\newcommand\norm[1]{\left\lVert#1\right\rVert_2}
\newcommand{\methodName}{Time-Series Machine-Learning Error Modeling}
\newcommand{\methodAcronym}{T-MLEM}
\newcommand{\RR}[1]{\mathbb R^{#1}}
\newcommand{\RRstar}[1]{\mathbb R_\star^{#1}}
\newcommand{\ntimesteps}{N_t}
\newcommand{\kArg}[1]{k^{#1}}
\newcommand{\stateDummy}{\boldsymbol w}
\newcommand{\stateDummyLF}{\stateDummy_\text{LF}}
\newcommand{\stateApprox}{\tilde\statevec}
\newcommand{\qoi}{s}
\newcommand{\qoiArg}[1]{s^{#1}}
\newcommand{\qoiFunc}{g}
\newcommand{\qoiFuncArgs}[3]{g(#1;#2,#3)}
\newcommand{\qoiApprox}{\tilde s}
\newcommand{\qoiApproxArg}[1]{\qoiApprox^{#1}}
\newcommand{\qoiErrorArg}[1]{\delta_{\qoi}^{#1}}
\newcommand{\qoiError}{\delta_{\qoi}}
\newcommand{\stateError}{\delta_{\statevec}}
\newcommand{\stateErrorArg}[1]{\delta_{\statevec}^{#1}}
\newcommand{\state}{\statevec}
\newcommand{\tol}{\varepsilon}
\newcommand{\range}[1]{\text{Ran}(#1)}
\newcommand{\identity}{\mathbf{I}}
\newcommand{\lipschitz}{\kappa}
\newcommand{\lipschitzQoI}{\lipschitz_\qoiFunc}
\newcommand{\hconstant}{h}  
\newcommand{\gammaconstantj}{\gamma_j}
\newcommand{\velo}{\velocityFOM}
\newcommand{\bm}[1]{\mathbf{#1}}
\newcommand{\nstate}{N}
\newcommand{\abs}[1]{\left|#1\right|}
\newcommand{\stateROMArg}[1]{\stateApprox^{#1}}
\newcommand{\resnFOM}{\boldsymbol{r}^n}        
\newcommand{\resnROMParamsArg}[1]{{\boldsymbol{r}}^n\left(#1;\stateApprox^{n-1}\paramDep,\ldots,\stateApprox^{n-\kArg{n}}\paramDep,\parametervec\right)}       
\newcommand{\zero}{\bm{0}}
\newcommand{\paramDep}{(\parametervec)}
\newcommand{\timeIndexSet}{\mathbb T}
\newcommand{\timeIndexSetTrainNoise}{\mathbb T_{\text{train},\noise}}
\newcommand{\timeIndexSetTestNoise}{\mathbb T_{\text{test},\noise}}
\newcommand{\timeIndexSetDum}{\overline{\mathbb T}}
\newcommand{\timeIndexSetTrain}{{\mathbb T}_{\text{train}}}
\newcommand{\timeIndexSetVal}{{\mathbb T}_{\text{val}}}
\newcommand{\timeIndexSetTest}{{\mathbb T}_{\text{test}}}
\newcommand{\timeIndexMappingNo}{\tau}
\newcommand{\timeIndexMapping}[1]{\timeIndexMappingNo(#1)}
\newcommand{\ntimestepsCoarse}{\bar\ntimesteps}
\newcommand{\card}[1]{|#1|}
\newcommand{\nfeatures}{N_{\feature}}
\newcommand{\nlatentf}{N_{\latentstate}}
\newcommand{\resApprox}{\tilde\residual}
\newcommand{\resApproxArg}[1]{\tilde\residual^{#1}(\parametervec)}
\newcommand{\resApproxArgParam}[2]{\tilde\residual^{#1}(#2)}
\newcommand{\resApproxArgNo}[1]{\tilde\residual^{#1}}
\newcommand{\resPCAApproxArg}[1]{\hat\residual^{#1}(\parametervec)}
\newcommand{\resGappyApproxArg}[1]{\hat\residual_g^{#1}(\parametervec)}
\newcommand{\errorGen}{\delta}
\newcommand{\meanErrorGen}{\overline{\errorGen}}
\newcommand{\errorGennmu}{\errorGen^n(\parametervec)}
\newcommand{\errorGennmuArg}[1]{\errorGen^{#1}(\parametervec)}
\newcommand{\errorGennTrainArg}[1]{\boldsymbol \errorGen^{#1}_{\text{train}}}
\newcommand{\errorGennAllArg}[1]{\bar {\boldsymbol \errorGen}^{#1}}
\newcommand{\errorGennArg}[1]{\errorGen^{#1}}
\newcommand{\errorModelGen}{\hat \delta}
\newcommand{\errorModelGenn}{\errorModelGen^n}
\newcommand{\errorModelGennmu}{\errorModelGen^n(\parametervec)}
\newcommand{\errorApproxGennmuArg}[1]{\hat\errorGen^{#1}(\parametervec)}
\newcommand{\errorApproxMeanGen}{\hat\errorGen_f}
\newcommand{\errorApproxMeanGenn}{\hat\errorGen^n_f}
\newcommand{\errorApproxMeanGennmunArg}[1]{\hat\errorGen_f^{#1}(\parametervec)}
\newcommand{\errorApproxMeanGennnArg}[1]{\hat\errorGen_f^{#1}}
\newcommand{\errorApproxMeanGennmu}{\hat\errorGen^n_f(\parametervec)}
\newcommand{\errorApproxRandGenn}{\hat{\errorGen}_{\noise}^n}
\newcommand{\errorApproxRandGennmu}{\hat{\errorGen}_{\noise}^n(\parametervec)}
\newcommand{\errorApproxRandGennmunArg}[1]{\hat{\errorGen}_{\noise}^{#1}(\parametervec)}
\newcommand{\errorApproxRandGen}{\hat{\errorGen}_{\noise}}
\newcommand{\errorApproxRandGenDummy}{\hat{\errorGen}_{\noise}}
\newcommand{\recursionf}{\mathbf{g}}
\newcommand{\featuref}{\featurevec}
\newcommand{\featurefnmuArg}[1]{\featuref^{#1}(\param)}
\newcommand{\featurefnArg}[1]{\featuref^{#1}}
\newcommand{\spacetimeOp}[1]{\bar{#1}}
\newcommand{\dual}{\boldsymbol y}
\newcommand{\stdual}{\spacetimeOp{\dual}}
\newcommand{\stresidual}{\spacetimeOp{\residual}}
\newcommand{\ststate}{\spacetimeOp{\state}}
\newcommand{\stt}{\spacetimeOp{t}}
\newcommand{\stqoiFunc}{\spacetimeOp{ \qoiFunc}}
\newcommand{\ststateApprox}{\spacetimeOp{\stateApprox}}
\newcommand{\stqoi}{\spacetimeOp{s}}
\newcommand{\stqoiApprox}{\spacetimeOp{\qoiApprox}}
\newcommand{\vecdummy}{\boldsymbol y}
\newcommand{\unroll}{\text{vec}}
\newcommand{\ststateDummy}{\spacetimeOp{\stateDummy}}
\newcommand{\dummyArg}{\boldsymbol w}
\newcommand{\indexSpace}{\mathcal{H}}
\newcommand{\paramDepoptional}{(\parametervec)}
\newcommand{\normalDistribution}{\MC{N}}
\newcommand{\laplaceDistribution}{\MC{L}}
\newcommand{\std}{\sigma}
\newcommand{\randomvars}[1]{\mathbb V^{#1}}
\newlist{Objective}{enumerate}{2}
\setlist[Objective]{label={Objective \arabic*},align=left}
\newlist{Step}{enumerate}{2}
\setlist[Step]{label={Step \arabic*},align=left}
\newlist{Consideration}{enumerate}{2}
\setlist[Consideration]{label={Consideration \arabic*},align=left}
\newlist{Feature}{enumerate}{2}
\setlist[Feature]{label={Feature \arabic*},align=left}
\newlist{Category}{enumerate}{2}
\setlist[Category]{label={Category \arabic*},align=left}
\theoremstyle{definition}
\newtheorem{proposition}{Proposition}[section]
\newtheorem{theorem}{Theorem}[section]
\newtheorem{remark}[theorem]{Remark}
\tikzset{
  neuron/.style={ 
    circle,draw,thick, 
    inner sep=0pt, 
    minimum size=3.5em, 
    node distance=1ex and 2em, 
  },
  group/.style={ 
    rectangle,draw,thick, 
    inner sep=0pt, 
  },
  io/.style={ 
    neuron, 
    fill=gray!15, 
  },
  conn/.style={ 
    -{Straight Barb[angle=60:2pt 3]}, 
    thick, 
  },
}
\definecolor{mygreen}{rgb}{0,0.6,0}
\definecolor{mygray}{rgb}{0.5,0.5,0.5}
\definecolor{mymauve}{rgb}{0.58,0,0.82}
\tiny\color{mygray}, 
\begin{document}

\begin{frontmatter}
\title{Time-series machine-learning error models for approximate solutions to
	parameterized dynamical systems}

\author[a]{Eric J.\ Parish}
\ead{ejparis@sandia.gov}
\author[a]{Kevin T.\ Carlberg}
\ead{ktcarlb@sandia.gov} 

\address[a]{Sandia National Laboratories, 7011 East Ave, Livermore, CA 94550}


\begin{abstract}
This work proposes a machine-learning framework for modeling the error
	incurred by approximate solutions to parameterized dynamical systems. In
	particular, we extend the machine-learning error models (MLEM) framework
	proposed in Ref.~\cite{freno_carlberg_ml} to dynamical systems. The proposed
	\methodName\ (\methodAcronym) method constructs a regression model that maps
	features---which comprise error indicators that are derived from standard
	\textit{a posteriori} error-quantification techniques---to a random variable for
	the approximate-solution error at each time instance. The proposed framework
	considers a wide range of candidate features, regression methods, and
	additive noise models.
	We consider primarily recursive regression
	techniques developed for time-series modeling, including both classical
	time-series models (e.g., autoregressive models) and recurrent neural
	networks (RNNs), but also analyze standard non-recursive regression
	techniques
	(e.g., feed-forward neural networks) for comparative purposes.
	Numerical experiments conducted on multiple benchmark problems
	illustrate that the long short-term memory (LSTM) neural network, which is a
	type of RNN, outperforms other methods and yields substantial improvements
	in error predictions over traditional approaches.
\end{abstract}
\end{frontmatter}

\def\layersep{2.5cm}

\section{Introduction}

Myriad applications in engineering and science require many simulations of a
(parameterized) dynamical-system model.  Examples of such ``many-query''
problems include uncertainty propagation, design optimization, and statistical
inference.  When the cost of a single dynamical-system simulation is
computationally expensive---as is the case for partial-differential-equation
(PDE) models characterized by a fine spatiotemporal discretization---such
many-query problems are intractable.  In these cases, analysts often replace
the original (high-fidelity) dynamical-system model with a surrogate model
that can generate low-cost \textit{approximate solutions}, which can in turn
make the many-query problem computationally tractable. Examples of surrogate
models include \textit{inexact solutions} arising from early termination of an
iterative method, \textit{lower-fidelity models} characterized by simplifying
physical assumptions (e.g., linearized dynamics, inviscid flow, elastic
deformation) or a coarse discretization, and projection-based
\textit{reduced-order models (ROMs)} (e.g., Galerkin projection with a proper
orthogonal decomposition basis).

The use of an approximate solution introduces error; as such, it is critical to quantify this error and
properly account for it in the analysis underpinning the many-query
problem. For this purpose, researchers have developed a wide range of
methods for 
\textit{a posteriori}
error quantification.
These efforts, which date back to the pioneering work 
of Babu\v{s}ka and Rheinboldt for finite element analysis~\cite{babuska_rheinboldt_error_1978a,babuska_rheinboldt_error_1978b,ainsworth_oden_fem_error_review},
have resulted in a techniques 
 that can be categorized as (1) \textit{a posteriori} error bounds, (2) error
 indicators, and (3) error models.

\textit{A posteriori error bounds} place bounds on
the (normed) state or quantity-of-interest (QoI) error arising from
the use of an approximate solution. These methods were originally developed in 
the finite-element~\cite{paraschivoiu_bounds97,paraschivoiu_bounds98,maday_2000} and
model-reduction communities. In the reduced-basis community,
Refs.~\cite{reduced_basis_rovas,reliable_prediction_rb}
derived \textit{a
posteriori} QoI error bounds for the reduced-basis method applied to linear
elliptic and parabolic PDEs. 
Refs.~\cite{grepl_patera_apost,rovas_apost} extended these results to
time-dependent linear parabolic PDEs. In the context of model reduction for nonlinear dynamical systems,
Ref.~\cite{homescu_romlinbounds} derived error bounds for
the linearized Galerkin ROM, while
Ref.~\cite{carlberg_lspg_v_galerkin} derived error bounds
for Galerkin and least-squares Petrov-Galerkin (LSPG) ROMs.
These error bounds are 
\textit{residual-based}, meaning that they depend
on the (dual) norm of the high-fidelity-model residual evaluated at the
approximate
solution. Such bounds are appealing because they can provide guaranteed,
rigorous
bounds on the errors of interest, thus providing a critical tool for
certified predictions. However, these bounds often suffer from 
lack of sharpness, i.e., they are often orders-of-magnitude
larger than the approximate-solution error itself. This is
exacerbated in dynamical systems, where error bounds for many types of  
approximate solutions grow
exponentially in time (see, e.g., Ref.~\cite{carlberg_lspg_v_galerkin}).
In addition, these
bounds typically require (difficult-to-compute) estimates or bounds of operator
quantities such as Lipschitz and inf--sup constants. These drawbacks often limit
the utility of \textit{a posteriori} error bounds in practical applications.

Alternatively, \textit{error indicators} are computable quantities that are 
\textit{indicative} of the approximate-solution
error. Error indicators typically do not provide unbiased
estimates of the error, nor do they rigorously bound the error;
instead, they are often correlated with the error, correspond to a low-order
approximation of the error, and/or
appear as terms in error bounds.
The two most common 
error indicators are the (dual) norm of the high-fidelity-model residual
evaluated at the approximate solution (i.e., the residual norm) and the
dual-weighted residual. The residual norm is informative of the
error, as it typically appears in \textit{a posteriori} error bounds and is
usually strongly correlated with the error; it is
often used within
greedy methods for snapshot collection 
\cite{bui2008parametric,bui2008model,hinze2012residual,
wu2015adaptive,yano2018lp} and when using ROMs for PDE-constrained
optimization within a trust-region framework
\cite{zahr2015progressive,zahrThesis,zahr2018efficient}.
In contrast, dual-weighted residuals are derived from a
Taylor-series approximation of the error, and are typically used for
goal-oriented error estimation and mesh adaptation~\cite{lu_phd_thesis,ackmann_errorestimation,VENDITTI2000204,
VENDITTI200240}. 
Due to their practical utility, error indicators have been largely successful
in quantifying and controlling errors through mesh adaptation for
static problems (i.e., problems without time evolution). However, their
success has been more 
limited for dynamical systems due to the fact that errors for such
systems exhibit dependence on \textit{non-local} quantities, i.e.,
the approximate-solution error at a given time instance depends on the past
time history of the system.
Thus, the residual norm 
at the current time instance is no longer directly indicative of the
approximate-solution error; the error additionally depends on non-local
quantities. This can be clearly seen from \textit{a posteriori} error bounds,
as the bound for the error at a given time instance depends on the residual
norm at previous time instances. Similarly, dual-weighted-residual error estimation requires
computing solutions to either the forward-in-time linearized sensitivity
equations or the backward-in-time linearized adjoint equations.
Not only does this incur significant
computational and storage costs, it can also be linearly unstable and produce
exploding gradients when applied to chaotic systems (although this can be mitigated by
approaches such as least-squares
shadowing~\cite{blonigan_thesis,shimizu_aiaa18}).
Further, it is often practically challenging to implement adjoint-based
methods, e.g., due to the requirement of residual-Jacobian
transposes, which are not always easily exposed.

\textit{Error models} seek to model the state and QoI errors
directly via regression techniques.  The most popular such approach is the
so-called ``model-discrepancy'' method of Kennedy and O'Hagan~\cite{ken_ohag} (and related approaches~\cite{huang_kriging,eldred_correction,march_optimize,gano_kriging}). The
model-discrepancy approach computes a Gaussian process (GP) that provides a
mapping from any finite number of points in parameter space to a Gaussian
random vector representing the approximate-solution error at those points. As
such, this technique can be considered a regression approach, where the
\textit{regression model} is provided by a GP, the \textit{features} are the
system parameters, and the \textit{response} is the approximate-solution
error.
Recently, researchers have pursued improvements over this technique
by
(1) considering a wider class of modern machine-learning regression methods
than simply Gaussian processes (which do not scale well to high-dimensional
feature spaces), and (2) considering more informative features than the system
parameters alone.  First, the reduced-order model error surrogates (ROMES)
method~\cite{carlberg_ROMES} was proposed in the context of static 
problems.
ROMES employs the same regression model (GP) as the model-discrepancy approach, but it employs
different features; namely, it employs the aforementioned \textit{error
indicators} (i.e., residual norm, approximated dual-weighted residual) as
features. Because these quantities are
indicative of the error, they provide more informative features for predicting
the error than the system parameters. Numerical experiments illustrated the
ability of 
the ROMES method to produce significantly more accurate
error predictions than the model-discrepancy approach. More recently, the
ROMES method was applied to
construct error models for the in-plane and out-of-plane state error incurred by
reduced-order models for static problems~\cite{romes_closure}.  Subsequent
work~\cite{freno_carlberg_ml} again considered static problems, but
constructed machine-learning error models that considered a wide range of
candidate regression models (e.g., neural networks, support vector machines,
random forests) and error-indicator-based features (e.g., residual
samples)
in order to predict the approximate-solution error response.
This work showed the ability of machine-learning methods to predict the error
with near perfect accuracy across a range of static problems and approximate-solution
types.

The construction of error models for dynamical systems is significantly more
challenging than doing so for static systems, as the errors exhibit
dependence on non-local quantities. Relevant work on constructing error models for
dynamical systems accounts for the time-dependent nature of the approximate-solution error either (1) by using both time itself and time-lagged error
indicators as features~\cite{trehan_ml_error} or (2) by constructing a different
regression method for each time instance or window~\cite{pagani_enkfrom}.  In
the first case, Ref.~\cite{trehan_ml_error} 
proposes the 
error modeling via machine learning (EMML) method. 
EMML is a regression approach, where the
{regression model} is provided by random forests or LASSO, the
{features} correspond to application-specific quantities that
include time itself and time-lagged quantities, and the {response} is the
(relative) QoI or state error at a given time instance.  While this work generated promising results on
an oil--water subsurface flow problem, it suffers from a noticeable
limitation: it treats error modeling as a classical (non-recursive)
regression problem as opposed to a time-series-prediction problem. As such,
it does not capture the intrinsic recursive structure of the error.  In the
second case, Ref.~\cite{pagani_enkfrom} proposes a regression model that is
tantamount to applying the model-discrepancy method \cite{ken_ohag} to model
the approximate-solution error at each
time instance or window, as 
the {regression model} corresponds to a GP, the {features} are
system parameters, and the {response} is the QoI error at a given time
instance/window. Unfortunately, this approach also fails to capture 
non-local dependencies of the error, and---as with the original model-discrepancy
method---does not use particularly informative features to
predict the error.  

The objective of this work is to construct accurate error models for dynamical
systems by (1) applying the same machine-learning error modeling framework of
Ref.~\cite{freno_carlberg_ml}, but (2) considering recursive regression
techniques developed for time-series modeling, as this naturally enables 
the method to capture the non-local dependencies of the error.
The proposed 
\methodName\ (\methodAcronym)
method, which views the error modeling problem from the 
{time-series-prediction} perspective, 
is characterized by a \textit{regression model}
provided by a recursive time-series-prediction model (e.g., autoregressive
model, recurrent neural network~\cite{rnn_rumelhart}),
\textit{features} corresponding to time-local error indicators (e.g., residual
samples), and a \textit{response} corresponding to the normed state
or QoI error at a given time instance.  For comparative purposes, we also consider classical
non-recursive regression methods (e.g., feed-forward neural networks) within
the \methodAcronym\ framework, which is analogous to the approach proposed in 
Ref.~\cite{trehan_ml_error}. To produce a statistical model of the
approximate-solution error, we also consider several noise models (e.g.,
Gaussian, Laplacian). The net
result is a random variable for the error whose statistics can be used to
assess the uncertainty in the error prediction.

%

The paper proceeds as follows.
Section~\ref{sec:dynamical_systems} sets the mathematical context
by introducing parameterized dynamical systems,
approximate solutions, and classical methods for error quantification.
Section~\ref{sec:ml_framework} outlines the proposed 
\methodAcronym\ method, including the structure of the
regression model, feature engineering, data generation, and training of the
regression and noise models.
Section~\ref{sec:numerical_experiments} provides numerical experiments that
apply the proposed framework to 1)
the parameterized advection--diffusion equation, where the approximate solution is provided
by a 
proper orthogonal decomposition (POD) ROM with Galerkin
projection, 2) the parameterized shallow-water equations, where the
approximate solution is again provided by a POD--Galerkin ROM, and 3) the
parameterized Burgers' equation, where the approximate solution is provided by
a coarse-mesh model. Finally, Section~\ref{sec:conclude} provides conclusions.

\section{Parametrized nonlinear dynamical systems}\label{sec:dynamical_systems}
This work considers the high-fidelity model to be a parameterized 
dynamical system characterized by a parameterized system of nonlinear ordinary
differential equations (ODEs) of the form
\begin{equation}\label{eq:fom}
\frac{d\statevecHFM }{dt} = \velocityFOM(\statevecHFM,t;\parametervec), \qquad
	\statevecHFM(0) = \statevec_0(\parametervec), \qquad t\in[0,T],
\end{equation}
where $T \in \mathbb{R}^+$ denotes the final time;
$\statevecHFM\equiv\statevecHFM(t,\parametervec)$ 
with 
$\statevecHFM:
[0,T]\times\parameterdomain\rightarrow
\mathbb{R}^N$
denotes the state implicitly defined as the solution to the initial-value problem
\eqref{eq:fom}; $\statevec_0:\parameterdomain\rightarrow
\RR{N}$ denotes the parameterized initial condition; $\parametervec \in \parameterdomain
\subseteq \RR{N_{\parametervec}}$ denote parametric inputs to the system; and
$\velocityFOM : \RR{N} \times [0,T]\times \parameterdomain \rightarrow \RR{N}
$ denotes the parameterized velocity, which may be linear or nonlinear in its first
argument. We refer to Eq.~\eqref{eq:fom} as the full-order-model system of ordinary
differential equations (FOM ODE), which
may arise, for example, from the spatial discretization of a system of PDEs.

Numerically computing a solution to Eq.~\eqref{eq:fom} requires applying a
time-discretization method. In this work, we assume the use of
a linear multistep method for this purpose. For notational simplicity, we
assume a uniform time discretization 
characterized by $\ntimesteps$ time instances 
$t^n = n\Delta t$, $n=0,\ldots,
\ntimesteps$ with 
time step
$\Delta t = T/\ntimesteps$; however, the proposed
methodology does not rely on this restriction.
Applying a linear multistep method to discretize the FOM ODE \eqref{eq:fom} yields the
following system of algebraic equations arising at the $n$th time instance,
which we refer to as the FOM
O$\Delta$E:
\begin{equation}\label{eq:fom_discretized}
\residual^{\tindx}(\statevec^{\tindx}; \statevec^{\tindx-1}, \hdots,
	\statevec^{\tindx - \kArg{\tindx}} , \parametervec) = \mathbf{0}, \qquad
	\tindx = 1,\ldots,\ntimesteps.
\end{equation}
The discrete state $\statevec^{\tindx}$ 
comprises the numerical approximation to
$\statevecHFM(t^\tindx)$
and is implicitly defined as the solution to
Eq.~\eqref{eq:fom_discretized} given the discrete state at
the previous $\kArg{\tindx}$ time instances and
the parameters $\parametervec$.
Here, the discrete residual
$\residual^{\tindx}: \mathbb{R}^N \otimes\RR{\kArg{\tindx}+ 1}\times
\parameterdomain \rightarrow \mathbb{R}^N$ is defined as
\begin{equation} \label{eq:FOMODeltaERes}
	\residual^{\tindx}:(\stateDummy;\statevec^{\tindx-1},\ldots,\statevec^{\tindx
	- \kArg{\tindx}},\parametervec)\mapsto
	\alpha_0\stateDummy - \Delta t\beta_0\velocityFOM(\stateDummy,t^\tindx;\parametervec)
	 + \sum_{i=1}^{\kArg{\tindx}}\alpha_i\statevec^{\tindx - i} - 
	 \Delta
	 t\sum_{i=1}^{\kArg{\tindx}}\beta_i\velocityFOM(\statevec^{\tindx-i},t^{\tindx-i};\parametervec).
\end{equation} 
Given the initial state $\statevec^{0}(\parametervec) = \statevec_0(\parametervec)$, solving
the FOM O$\Delta$E \eqref{eq:fom_discretized} for $n=1,\ldots,\ntimesteps$ yields the sequence of
(discrete) FOM solutions $\statevec^n(\parametervec)$, $n=0,\ldots,\ntimesteps$. 

In many cases, the analyst is primarily interested in computing a scalar-valued quantity
of interest (QoI) at each time instance, which we define as
\begin{equation} \label{eq:FOMQOI}
	\qoiArg{\tindx}:\parametervec\mapsto\qoiFuncArgs{\statevec^\tindx(\parametervec)}{\timeArg{\tindx}}{\parametervec},\quad
	n=0,\ldots,\ntimesteps,
\end{equation} 
where 
$\qoiArg{\tindx}:\parameterdomain\rightarrow\RR{}$ and
$\qoiFunc:\RR{N}\times[0,T]\times \parameterdomain\rightarrow\RR{}$ denotes the
QoI functional.

If the FOM state-space dimension $N$ or the number of time
instances $\ntimesteps$ is large, then computing the sequence of FOM
solutions $\statevec^n(\parametervec)$, $n=0,\ldots,\ntimesteps$ and
associated QoIs 
$\qoiArg{\tindx}(\parametervec)$, $\tindx=0,\ldots,\ntimesteps$
can be
computationally expensive. If additionally these solutions are sought at many
parameter instances, then the resulting \textit{many-query} problem is
computationally intractable.
In such scenarios, analysts typically aim to overcome this computational barrier by replacing the FOM with a surrogate model
that can generate low-cost \textit{approximate solutions}.

\subsection{Approximate solutions}

When a surrogate model is employed to mitigate the aforementioned
computational bottleneck, it generates a sequence of \textit{approximate
solutions} 
$\stateApprox^\tindx:\parameterdomain\rightarrow\RR{N}$,
$\tindx=0,\ldots,\ntimesteps$
with $\stateApprox^0(\parametervec)=\stateApprox_0(\parametervec)$ given
and attendant
approximate QoIs
\begin{equation} \label{eq:approxQOI}
	\qoiApproxArg{\tindx}:\parametervec\mapsto\qoiFuncArgs{\stateApprox^\tindx(\parametervec)}{\timeArg{\tindx}}{\parametervec},\quad
	n=0,\ldots,\ntimesteps,
\end{equation} 
where $\qoiApproxArg{\tindx}:\parameterdomain\rightarrow\RR{}$.
As in Ref.~\cite{freno_carlberg_ml}, we now discuss three types of surrogate
models that are often employed to generate the approximate solutions
$\stateApprox^\tindx(\parametervec)$, $\tindx=0,\ldots,\ntimesteps$.

\subsubsection{Inexact solutions}

If one applies an iterative method (e.g., Newton's method) to solve the FOM
O$\Delta$E system \eqref{eq:fom_discretized} arising at each time instance,
then inexact tolerances $\tol^\tindx>0$, $\tindx=1,\ldots,\ntimesteps$ can be
employed such that the approximate solutions satisfy
\begin{equation}
\|\residual^{\tindx}(\stateApprox^{\tindx}; \stateApprox^{\tindx-1}, \hdots,
	\stateApprox^{\tindx - \kArg{\tindx}} , \parametervec)\|_2 \leq\tol^\tindx,
	\qquad \tindx = 1,\ldots,\ntimesteps.
\end{equation}
In such cases, the surrogate model merely corresponds to early termination of
the associated iterative method.

\subsubsection{Lower-fidelity models}\label{sec:LFM}

Approximate solutions can also be obtained by employing a computational model
of lower fidelity, which can be achieved, for example, by introducing
simplifying physical assumptions or coarsening the spatial discretization.
In such cases, the corresponding low-fidelity-model
(LFM) ODE takes the form
\begin{equation}\label{eq:approx_model}
\frac{d\statevecLF}{dt}  = \velocityLF(\statevecLF,t;\parametervec),
	\qquad \statevecLF(0) = {\statevecLFInit}(\parametervec),\qquad
t\in[0,T],
\end{equation} 
where $\statevecLF\equiv \statevecLF(t,\parametervec)$
with $\statevecLF:[0,T]\times \parameterdomain\rightarrow
\RR{\dimLF}$ denotes the LFM state of dimension
$\dimLF(\ll N)$;
$\statevecLFInit:\parameterdomain\rightarrow\mathbb{R}^{\dimLF}$ denotes the
parameterized initial conditions; and $\velocityLF: \mathbb{R}^{\dimLF}
\times[0,T]\times \parameterdomain \rightarrow \mathbb{R}^{\dimLF}$ denotes
the parameterized LFM velocity. 

Applying a linear multistep method (with the same uniform time step 
$\Delta t$
as the FOM) to the
discretized LFM ODE \eqref{eq:approx_model} yields the
LFM O$\Delta$E arising at the $n$th time instance
\begin{equation}\label{eq:LF_discretized}
\residualLF^{\tindx}(\statevecLF^{\tindx}; \statevecLF^{\tindx-1}, \hdots,
	\statevecLF^{\tindx - \kArg{\tindx}} , \parametervec) = \mathbf{0}, \qquad
	\tindx = 1,\ldots,\ntimesteps,
\end{equation}
where the discrete state $\statevecLF^{\tindx}$ is the numerical approximation
to $\statevecLF(t^\tindx)$ and is implicitly defined as the solution to 
Eq.~\eqref{eq:LF_discretized}
given the LFM state at the previous $\kArg{\tindx}$ time instances and
the parameters $\parametervec$.
Analogously to definition \eqref{eq:FOMODeltaERes}, the LFM O$\Delta$E
residual $\residual^{\tindx}: \mathbb{R}^\dimLF \otimes\RR{\kArg{\tindx}+
1}\times \parameterdomain \rightarrow \mathbb{R}^N$ is defined as
\begin{equation} \label{eq:residualLF}
	\residualLF^{\tindx}:(\stateDummyLF;\statevecLF^{\tindx-1},\ldots,\statevecLF^{\tindx
	- \kArg{\tindx}},\parametervec)\mapsto
	\alpha_{\text{LF},0}\stateDummyLF - \Delta t\beta_{\text{LF},0}\velocityLF(\stateDummyLF,t^\tindx;\parametervec)
	 + \sum_{i=1}^{\kArg{\tindx}}\alpha_{\text{LF},i}\statevecLF^{\tindx - i} - 
	 \Delta
	 t\sum_{i=1}^{\kArg{\tindx}}\beta_{\text{LF},i}\velocityLF(\statevecLF^{\tindx-i},t^{\tindx-i};\parametervec).
\end{equation} 
We emphasize that the time integrators for the FOM and LFM can be different,
as denoted by the LF subscripts on the coefficients in definition \eqref{eq:residualLF}.
As with the FOM, given the initial LFM state $\statevecLF^0(\parametervec) =
{\statevecLFInit}(\parametervec)$, solving the LFM O$\Delta$E
\eqref{eq:LF_discretized} yields the sequence of (discrete) LFM solutions
$\statevecLF^{\tindx}$, $\tindx=0,\ldots,\ntimesteps$.

Critically, to recover the approximate solution in the FOM state space $\RR{N}$, we assume the
existence of a prolongation operator
$\prolongate:\RR{\dimLF}\times\parameterdomain\rightarrow\RR{N}$ such that 
\begin{equation} 
	\stateApprox^\tindx(\parametervec)  = 
	\prolongate( \statevecLF^\tindx(\parametervec);\parametervec),\quad
	\tindx=0,\ldots,\ntimesteps.
\end{equation} 

\subsubsection{Projection-based reduced-order models}\label{sec:romsec}
Projection-based reduced-order models (ROMs) comprise a popular technique for
generating approximate solutions. Such approaches can be derived by applying
projection to either the FOM ODE \eqref{eq:fom} or O$\Delta$E
\eqref{eq:fom_discretized}.
In the former case, ROMs correspond to a particular type of low-fidelity
model as described in Section \ref{sec:LFM}. In particular,
introducing a trial-basis matrix $\basismat\in\RRstar{N\times \dimrom}$ (e.g.,
computed via proper orthogonal decomposition \cite{berkooz_turbulence_pod}) and a
test-basis matrix $\testmat\in\RRstar{N\times\dimrom}$ (e.g., 
$\testmat = \basismat$
in the case of Galerkin projection), where $\dimrom\ll N$
and $\RRstar{m\times
n}$ denotes the set of $m\times n$ full-column-rank matrices (i.e., the
non-compact Stiefel manifold), ROMs seek an approximate solution in a
$\dimrom$-dimensional affine trial subspace, i.e.,
\begin{equation}\label{eq:ROM_POD}
	\statevec(t;\param) \approx \stateApprox(t;\param) = 
	\statevecIntercept(\param) + 
	\basismat
	\genstatevecROM(t;\param),
\end{equation}
where $\genstatevecROM: 
[0,T]\times\parameterdomain\rightarrow\RR{\dimrom}$ are the generalized coordinates and $\statevecIntercept : \parameterdomain \rightarrow \RR{\dimrom}$ 
denotes the reference state (typically chosen to be either $\statevecIntercept(\param) = \statevec_0(\param)$ or $\statevecIntercept(\param) = \boldsymbol 0$).
The generalized coordinates are computed by performing a projection process performed on the FOM ODE \eqref{eq:fom}. Namely, 
the approximation \eqref{eq:ROM_POD} is substituted in the FOM ODE \eqref{eq:fom}, and the
resulting residual is enforced to be orthogonal to range of the test-basis
matrix. This projection process yields
\begin{equation}\label{eq:ROM}
	\frac{d \statevecROM}{dt} = [\testmat^T\basismat]^{-1}\testmat^T
\velocityFOM (\statevecIntercept(\param) + 
	\basismat
	\genstatevecROM,t;\parametervec), \qquad
 \genstatevecROM(0;\param) =
 	\basismat^+
	(\statevec_0(\param) - \statevecIntercept(\param)),
\end{equation}
where the superscript $+$ denotes the Moore--Penrose pseudo-inverse.
This approach to model reduction is equivalent to a low-fidelity
model of the form \eqref{eq:approx_model} with $\statevecLF = \statevecROM$,
$\velocityLF:(\genstatevecROM,t;\parametervec)\mapsto
[\testmat^T\basismat]^{-1}\testmat^T
\velocityFOM (\statevecIntercept(\param) + 
	\basismat
	\genstatevecROM,t;\parametervec) $, and
	$\prolongate:(\genstatevecROM,\parametervec)\mapsto\statevecIntercept(\param) + 
	\basismat
	\genstatevecROM.$

The most common method that associates with applying projection to the FOM
O$\Delta$E \eqref{eq:fom_discretized} is the least-squares
Petrov--Galerkin (LSPG) method \cite{carlberg_lspg_v_galerkin}, which
substitutes the approximation \eqref{eq:ROM_POD} into the FOM
O$\Delta$E \eqref{eq:fom_discretized} and minimizes the resulting residual in
the $\ell^2$-norm. In this case, the approximate
solutions satisfy 
\begin{equation} 
	\stateApprox^\tindx \in\underset{\stateApprox\in 
	\statevecIntercept(\param) + 
	\range{\basismat}
	}{\mathrm{arg}\,\mathrm{min}}
	\|
\residual^{\tindx}(\stateApprox; \stateApprox^{\tindx-1}, \hdots,
	\stateApprox^{\tindx - \kArg{\tindx}} , \parametervec)
	\|_2.
\end{equation} 
It can be shown \cite{carlberg_lspg_v_galerkin} that
this LSPG approach can be expressed as a time-continuous projection
\eqref{eq:ROM} 
with $\testmat = \testmat(\genstatevecROM,t;\param)=(\alpha_0\identity - \Delta t\beta_0\partial \velocityFOM/\partial
\state(\statevecIntercept(\param) + 
	\basismat
	\genstatevecROM,t;\param))\basismat$
if $\beta_j=0$ for $j\geq 1$ (i.e., a single-step method is
used), the velocity $\velocityFOM$ is linear in its first argument, or if
$\beta_0=0$ (i.e., an explicit scheme is used).

\subsection{Approximate-solution errors}\label{sec:error}
Employing an approximate solution in lieu of the FOM solution incurs an error
that must be accounted for in the ultimate analysis. This work focuses on
quantifying two types of such errors, namely
\begin{enumerate}
	\item the quantity-of-interest error $ \qoiErrorArg{\tindx}(\parametervec)
	\defeq 
		\qoiArg{\tindx}(\parametervec) - 
		\qoiApproxArg{\tindx}(\parametervec)$, $\tindx=1,\ldots,\ntimesteps$ and
\item the normed state error $ 
	\stateErrorArg{\tindx}(\parametervec)	\defeq\norm{ \statevec^\tindx(\parametervec) -
		\stateApprox^\tindx(\parametervec) } $, $\tindx=1,\ldots,\ntimesteps$.
\end{enumerate}
Because 
$\stateApprox^0(\parametervec)$ and $\state^0(\parametervec)$ are given, we
can compute
$\qoiErrorArg{0}(\parametervec)$ and $\stateErrorArg{0}(\parametervec)$
explicitly.  

\subsection{Classical approaches for error
quantification}\label{sec:classicalError}


Classical approaches for quantifying approximate-solution errors for
parameterized dynamical systems correspond to \textit{a posteriori error
bounds} and \textit{error indicators}. We proceed by briefly reviewing these
methods, as they serve as a starting point for our work and will be used to
engineer features for the proposed regression framework. We focus in
particular on the dual-weighted-residual error indicator.

\subsubsection{\textit{A posteriori} error bounds}\label{sec:errorbounds}

We now state \textit{a posteriori} error bounds for both the errors
$\abs{\qoiErrorArg{\tindx}(\parametervec)}$ and
$\stateErrorArg{\tindx}(\parametervec)$, $\tindx=1,\ldots,\ntimesteps$, which
arise from \textit{any} arbitrary sequence of approximate solutions
$\stateApprox^\tindx(\parametervec)$, $\tindx=0,\ldots,\ntimesteps$. The first
result was presented in
Ref.~\cite[Theorem 4.3]{leeCarlberg}.

\begin{proposition}[\textit{A posteriori} error bounds \cite{leeCarlberg}]\label{thm:error_bound}
	For a given parameter instance $\parametervec\in\parameterdomain$, if the velocity $\velo$ is Lipschitz continuous, i.e.,
	there exists a constant $\lipschitz>0$ such that
	$\norm{\velo(\bm{x},t;\parametervec)-\velo(\bm{y},t;\parametervec)}\leq\lipschitz\norm{\bm{x}-\bm{y}}$
	for all $\bm{x},\,\bm{y}\in\RR{\nstate}$ and
	$t\in\{t^\tindx\}_{i=1}^{\ntimesteps}$, and
	the time step is sufficiently small such that
	$ \Delta t < {\abs{\alpha_0}}/{\abs{\beta_0}\lipschitz}$,
	then 
\begin{equation} \label{eq:finalGenResult}
	\stateErrorArg{n}(\parametervec)\leq\frac{1}{\hconstant}
	\norm{\resnROMParamsArg{\stateROMArg{n}\paramDep}} +
\sum_{j=1}^{\kArg{n}}\gammaconstantj\stateErrorArg{n-j}(\parametervec)
\end{equation} 
for all $n=1,\ldots,\ntimesteps$. 
	Here, $\hconstant\defeq\abs{\alpha_0} - \abs{\beta_0}\lipschitz\Delta t$
and $\gammaconstantj\defeq\left(\abs{\alpha_j} + 
	\abs{\beta_j}\lipschitz
	\Delta
	t\right)/\hconstant$.
	If additionally 
the QoI functional $\qoiFunc$ is Lipschitz continuous, i.e.,
	there exists a constant $\lipschitzQoI>0$ such that
	$\abs{\qoiFuncArgs{\bm{x}}{t}{\parametervec}-\qoiFuncArgs{\bm{y}}{t}{\parametervec}}\leq\lipschitzQoI\norm{\bm{x}-\bm{y}}$
	for all $\bm{x},\,\bm{y}\in\RR{\nstate}$ and
	$t\in\{t^\tindx\}_{i=1}^{\ntimesteps}$, then
\begin{equation} \label{eq:finalGenResultQoI}
	\abs{\qoiErrorArg{\tindx}(\parametervec)}\leq\frac{\lipschitzQoI}{\hconstant}
	\norm{\resnROMParamsArg{\stateROMArg{n}\paramDep}} +
\lipschitzQoI\sum_{j=1}^{\kArg{n}}\gammaconstantj
	\stateErrorArg{n-j}(\parametervec)
\end{equation} 
for all $n=1,\ldots,\ntimesteps$.
\end{proposition}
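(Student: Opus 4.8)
The plan is to subtract the FOM O$\Delta$E residual evaluated at the approximate solution from the same residual evaluated at the exact discrete state, and then to close a recursive inequality in the state error by invoking Lipschitz continuity of $\velo$. Define the (unnormed) state error $\boldsymbol{e}^\tindx \defeq \statevec^\tindx(\parametervec) - \stateApprox^\tindx(\parametervec)$, so that $\stateErrorArg{\tindx}(\parametervec) = \norm{\boldsymbol{e}^\tindx}$. Since the exact discrete state satisfies the FOM O$\Delta$E \eqref{eq:fom_discretized}, we have $\residual^\tindx(\statevec^\tindx; \statevec^{\tindx-1}, \ldots, \statevec^{\tindx-\kArg{\tindx}}, \parametervec) = \mathbf{0}$. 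Subtracting the residual definition \eqref{eq:FOMODeltaERes} evaluated at the approximate states and rearranging isolates the leading error term:
\begin{equation*}
\begin{aligned}
\alpha_0 \boldsymbol{e}^\tindx = {}& -\resnROMParamsArg{\stateROMArg{n}\paramDep} + \Delta t \beta_0 \left[ \velo(\statevec^\tindx, t^\tindx; \parametervec) - \velo(\stateApprox^\tindx, t^\tindx; \parametervec) \right] \\
& - \sum_{i=1}^{\kArg{\tindx}} \alpha_i \boldsymbol{e}^{\tindx-i} + \Delta t \sum_{i=1}^{\kArg{\tindx}} \beta_i \left[ \velo(\statevec^{\tindx-i}, t^{\tindx-i}; \parametervec) - \velo(\stateApprox^{\tindx-i}, t^{\tindx-i}; \parametervec) \right].
\end{aligned}
\end{equation*}

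Next I would take the Euclidean norm of both sides, apply the triangle inequality, and bound each velocity difference via the hypothesis $\norm{\velo(\bm{x}, t; \parametervec) - \velo(\bm{y}, t; \parametervec)} \leq \lipschitz \norm{\bm{x} - \bm{y}}$. Moving the resulting $\abs{\beta_0}\lipschitz\Delta t\,\norm{\boldsymbol{e}^\tindx}$ contribution from the right-hand side to the left and collecting terms gives
\begin{equation*}
\hconstant \norm{\boldsymbol{e}^\tindx} \leq \norm{\resnROMParamsArg{\stateROMArg{n}\paramDep}} + \sum_{i=1}^{\kArg{\tindx}} \left( \abs{\alpha_i} + \abs{\beta_i} \lipschitz \Delta t \right) \norm{\boldsymbol{e}^{\tindx-i}},
\end{equation*}
with $\hconstant = \abs{\alpha_0} - \abs{\beta_0}\lipschitz\Delta t$. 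The step requiring the most care—and effectively the only nonroutine point—is ensuring that $\hconstant > 0$ so that dividing through preserves the inequality direction; this is exactly what the time-step restriction $\Delta t < \abs{\alpha_0}/(\abs{\beta_0}\lipschitz)$ guarantees. Dividing by $\hconstant$ and identifying $\gammaconstantj = (\abs{\alpha_j} + \abs{\beta_j}\lipschitz\Delta t)/\hconstant$ then yields the state bound \eqref{eq:finalGenResult} for every $\tindx = 1,\ldots,\ntimesteps$.

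For the QoI bound I would start from the definitions \eqref{eq:FOMQOI} and \eqref{eq:approxQOI}, writing $\abs{\qoiErrorArg{\tindx}(\parametervec)} = \abs{\qoiFuncArgs{\statevec^\tindx(\parametervec)}{t^\tindx}{\parametervec} - \qoiFuncArgs{\stateApprox^\tindx(\parametervec)}{t^\tindx}{\parametervec}}$, apply the Lipschitz hypothesis on $\qoiFunc$ to obtain $\abs{\qoiErrorArg{\tindx}(\parametervec)} \leq \lipschitzQoI \norm{\boldsymbol{e}^\tindx} = \lipschitzQoI \stateErrorArg{\tindx}(\parametervec)$, and then substitute the state bound \eqref{eq:finalGenResult} just established. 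Distributing $\lipschitzQoI$ over the two terms reproduces \eqref{eq:finalGenResultQoI} directly. No fixed-point or induction argument is needed here, since each bound is stated pointwise in $\tindx$ with the lagged errors $\stateErrorArg{\tindx-j}(\parametervec)$ appearing on the right-hand side rather than being resolved; the entire argument reduces to one algebraic rearrangement plus two applications of the triangle inequality and Lipschitz continuity, gated only by the positivity of $\hconstant$.
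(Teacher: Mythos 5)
Your proof is correct. The paper itself does not rederive the state-error bound---it simply cites Theorem 4.3 of the referenced work---and it obtains the QoI bound exactly as you do, by applying Lipschitz continuity of the QoI functional to the state bound; your self-contained derivation of the state bound (subtract the residual identities, apply the triangle inequality and Lipschitz continuity, then absorb the $\abs{\beta_0}\lipschitz\Delta t\,\stateErrorArg{n}(\parametervec)$ term into the left-hand side using the time-step restriction to guarantee $\hconstant>0$) is the standard argument and matches what the cited theorem establishes.
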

\begin{proof}
	Bound \eqref{eq:finalGenResult} was derived in Ref.~\cite[Theorem
	4.3]{leeCarlberg}. 
	Bound
	\eqref{eq:finalGenResultQoI}
follows directly from Lipschitz continuity of the QoI functional and the bound 
\ref{eq:finalGenResult}.
\end{proof}

We make three critical observations from inequalities~\eqref{eq:finalGenResult}
and \eqref{eq:finalGenResultQoI}:
\begin{enumerate}
	\item The error bounds are \textit{residual based}, i.e., the magnitudes of
				the error bounds are driven by the FOM residual operator $\resnFOM$ evaluated
				at the sequence of approximate solutions $\stateApprox^\tindx(\parametervec)$,
				$\tindx=0,\ldots,\ntimesteps$.  Note that the approximate solution incurs no
				error if the approximate solutions exactly satisfy the FOM O$\Delta$E
				\eqref{eq:fom_discretized} such that the residual is zero at all time
				instances. This also illustrates why the residual norm is often viewed as a
				useful \textit{error indicator} for guiding greedy methods for snapshot
				collection
				\cite{bui2008parametric,bui2008model,hinze2012residual,
				wu2015adaptive,yano2018lp} or trust-region optimization algorithms
				\cite{zahr2015progressive,zahrThesis,zahr2018efficient}.
	\item The error bounds exhibit dependence on \textit{non-local} quantities,
				i.e., the error at the $n$th time instance depends on the error at the
				previous $\kArg{n}$ time instances through recursion via the rightmost term in
				inequalities \eqref{eq:finalGenResult} and \eqref{eq:finalGenResultQoI}.  This
				comprises a fundamental difference from error bounds for static problems.
	\item The error bounds often \textit{lack sharpness}, i.e., they are often
		significant larger than the magnitudes of the actual errors. This
		overestimation is exacerbated for ill-conditioned problems (in which case
		the Lipschitz constants $\lipschitz$ and $\lipschitzQoI$ are large) and
		long time horizons (as the error bounds grow exponentially in time). As
		such, their practical utility for error estimation itself is often limited.
	\item The error bounds are \textit{difficult to compute}, i.e., they require
		computing Lipschitz constants $\lipschitz$ and
		$\lipschitzQoI$, which is challenging for general nonlinear
		dynamical systems. While bounding these constants is possible in some
		contexts \cite{huynh2010natural}, this often incurs substantial
		additional computational costs.
\end{enumerate}

\subsubsection{Error indicator: dual-weighted residuals}\label{sec:DWR}
Besides the residual norm, the dual-weighted residual is perhaps the most
popular error indicator. The dual-weighted residual is derived from a low-order approximation of
the QoI error and, as such, is often strongly correlated with the error
\cite{carlberg_ROMES}. 
Because deriving dual-weighted residuals for dynamical systems requires
considering the problem from the monolithic ``space--time'' perspective, we
now introduce the required space--time formulation of the problem.


We begin by defining a ``vectorization" function that assembles the full
space--time solution from a tuple of time-local solutions as
\begin{align*}
 \unroll &\vcentcolon (\vecdummy^1, \hdots, \vecdummy^{N_t})  \mapsto \begin{bmatrix} [ \vecdummy^1 ]^T & \hdots & [\vecdummy^{N_t}]^T \end{bmatrix}^T \\
&\vcentcolon \mathbb{R}^p \otimes \mathbb{R}^{N_t} \rightarrow \mathbb{R}^{p N_t},
\end{align*}
for arbitrary $p$. Then, we define the space--time FOM state and space--time approximate solution 
as
$ \ststate(\parametervec) \defeq \unroll( \state^1(\parametervec), \hdots,
\state^{N_t} (\parametervec) )$ and  
$ \ststateApprox (\parametervec)\defeq \unroll( \stateApprox^1(\parametervec),
\hdots, \stateApprox^{N_t} (\parametervec) ),$ respectively, 
where $\ststate, \ststateApprox:\parameterdomain\rightarrow \RR{N
N_t}$. We also define the space--time residual as
\begin{align*}
	\stresidual &: \big(\ststateDummy;\parametervec \big) \mapsto \unroll(
  \residual^1(\stateDummy^1; \parametervec)  , 
  \ldots  ,  
  \residual^{N_t}(\stateDummy^{N_t};\stateDummy^{N_t-1} , \ldots ,{\stateDummy^{N_t-k}}^{N_t}, \parametervec) 
), \\
&: \RR{N N_t} \times \parameterdomain \rightarrow \mathbb{R}^{N N_t},
\end{align*}
where
$\ststateDummy\equiv\unroll(\stateDummy^1,\ldots,\stateDummy^{\ntimesteps})$.
Lastly, we define
the space--time QoI functional as
\begin{align*}
\stqoiFunc &\vcentcolon \big( \ststateDummy;\parametervec) \mapsto \unroll \big( 
\qoiFunc(\stateDummy^1;t^1,\parametervec) ,
\hdots ,
\qoiFunc(\stateDummy^{N_t};t^{N_t},\parametervec) 
\big), \\
& \vcentcolon \mathbb{R}^{N N_t} \times \parameterdomain \rightarrow
	\mathbb{R}^{N_t},
\end{align*}
such that 
the space--time QoIs computed from the FOM and approximate solutions are given by
\begin{align*}
\stqoi: \parametervec\mapsto
\stqoiFunc( \ststate\paramDepoptional;\parametervec),
	\quad
\stqoiApprox: \parametervec\mapsto
\stqoiFunc( \ststateApprox\paramDepoptional;\parametervec),
\end{align*}
respectively, where  $\stqoi ,\stqoiApprox: \parameterdomain \rightarrow
\RR{N_t}$.

Assuming the residual is twice continuously differentiable
in its first
argument and noting
that $\stresidual(\ststate\paramDepoptional;\parametervec)=\zero$, we can write
\begin{equation}\label{eq:resFirst}
- \stresidual(\ststateApprox\paramDepoptional;\parametervec) = \bigg[ \frac{\partial
	\stresidual}{\partial
	\ststateDummy}(\ststateApprox(\parametervec);\parametervec) \bigg]
	\bigg(\ststate\paramDepoptional - \ststateApprox\paramDepoptional \bigg) + \mathcal{O}\bigg(
	\norm{\ststate\paramDepoptional - \ststateApprox\paramDepoptional}^2 \bigg).
\end{equation}
Assuming the space--time residual Jacobian $\bigg[ \frac{\partial
\stresidual}{\partial \ststateDummy}(\ststateApprox\paramDepoptional;\parametervec) \bigg]$ is
invertible, Eq.~\eqref{eq:resFirst} can be solved for a first-order
approximation of the solution error
\begin{equation}\label{eq:resFirstSolve}
\ststate\paramDepoptional - \ststateApprox\paramDepoptional = 
- \bigg[ \frac{\partial
	\stresidual}{\partial \ststateDummy}(\ststateApprox\paramDepoptional;\parametervec)
	\bigg]^{-1}\stresidual(\ststateApprox\paramDepoptional;\parametervec) +
	\mathcal{O}\bigg( \norm{\ststate\paramDepoptional - \ststateApprox\paramDepoptional}^2 \bigg).
\end{equation}
If we additionally assume the space--time QoI functional $\stqoiFunc$ is twice continuously differentiable
in its first argument, the error in the space--time QoI can be represented to first order as, 
\begin{equation}\label{eq:qoi_error_1}
 \stqoi(\parametervec) - \stqoiApprox(\parametervec) = \bigg[ \frac{\partial
	\stqoiFunc}{\partial \ststateDummy}(\ststateApprox\paramDepoptional;\parametervec)
	\bigg] \bigg(\ststate\paramDepoptional - \ststateApprox\paramDepoptional \bigg)+ \mathcal{O}\bigg(\norm{
		\ststate\paramDepoptional - \ststateApprox\paramDepoptional }^2 \bigg).
\end{equation}
Substituting Eq.~\eqref{eq:resFirstSolve} into Eq.~\eqref{eq:qoi_error_1} yields,
\begin{equation} \label{eq:dwrFirstOrder}
 \stqoi(\parametervec) - \stqoiApprox(\parametervec)  =
	\dualweightedresidual(\parametervec) + \mathcal{O}\bigg(\norm{
		\ststate\paramDepoptional - \ststateApprox\paramDepoptional }^2 \bigg),
\end{equation} 
where the dual-weighted residual comprises a first-order approximation of the
QoI error and is defined as
\begin{align}\label{eq:dual_final}
 \dualweightedresidual(\parametervec)  \defeq  -&\underbrace{ \bigg[
	 \frac{\partial \stqoiFunc}{\partial
	 \ststateDummy}(\ststateApprox\paramDepoptional;\parametervec) \bigg ]
 }_{N_t \times N N_t} \underbrace{ \bigg[ \frac{\partial \stresidual}{\partial
	\ststateDummy}(\ststateApprox\paramDepoptional;\parametervec)\bigg]^{-1}
	}_{N N_t \times N N_t}  \underbrace{
		\stresidual(\ststateApprox\paramDepoptional;\parametervec) }_{N N_t \times 1}, \\
  =&\begin{bmatrix*}[c]
\matshapea & \\[-5pt]
& \matshapea & \\[-5pt]
& & \hdots \\[-5pt]
 & & &\matshapea 
\end{bmatrix*}
\begin{bmatrix*}[c]
\matshapebf &  \\ 
\matshapebf & \matshapebf \\
\vdots &  \vdots  & \ddots \\
\matshapebf & \cdots   & \cdots & \matshapebf \\
\end{bmatrix*}
\begin{bmatrix*}[c]
\matshapec \\ \matshapec  \\ \vdots \\  \matshapec
\end{bmatrix*}.
\end{align}
Computing the dual-weighted residual \eqref{eq:dual_final} involves applying
the inverse of the $\nstate\ntimesteps\times \nstate\ntimesteps$ space--time
residual Jacobian and can be computed in one of two ways: (1) the direct
(forward) method, or (2) the adjoint (backward) method. The direct method
solves the (linear) \textit{sensitivity equations}
\begin{equation}\label{eq:stforward}
	\frac{\partial \stresidual}{\partial
\ststateDummy}(\ststateApprox\paramDepoptional;\parametervec) \stforward=
	\stresidual(\ststateApprox\paramDepoptional;\parametervec),
\end{equation}
where $\stforward\equiv\stforward(\parametervec)\equiv\unroll( \forward^1(\parametervec), \hdots,
\forward^{N_t} (\parametervec) )$ with 
$\forward^n:\parameterdomain\rightarrow\RR{\nstate}$, $n=1,\ldots,\ntimesteps$
is implicitly defined as the solution to Eq.~\eqref{eq:stforward} and
denotes the space--time \textit{sensitivity vector}. The direct method 
subsequently computes the dual-weighted residual as 
\begin{equation}\label{eq:dwrForward}
 \dualweightedresidual(\parametervec)  =   -\big[ \frac{\partial
	\stqoiFunc}{\partial \ststateDummy}(\ststateApprox\paramDepoptional;\parametervec) \big ]
	\stforward(\parametervec).
\end{equation}
Due to the sparsity pattern of the space--time residual Jacobian 
$\frac{\partial \stresidual}{\partial
\ststateDummy}(\ststateApprox\paramDepoptional;\parametervec)
$, this approach is equivalent to performing a linear forward-in-time solve,
which can be executed simultaneously with the forward-in-time solve used to
generate the approximate solutions, i.e., the sensitivities
$\forward^\tindx(\parametervec)$ 
can be computed contemporaneously with $\stateApprox^\tindx(\parametervec)$. In contrast,
the adjoint method solves the  multiple-right-hand-side (linear) \textit{adjoint
equations}
\begin{equation}\label{eq:stadjoint}
	\bigg[ \frac{\partial \stresidual}{\partial
\ststateDummy}(\ststateApprox\paramDepoptional;\parametervec)\bigg]^T \stdual = \bigg[
	\frac{\partial \stqoiFunc}{\partial
	\ststateDummy}(\ststateApprox\paramDepoptional;\parametervec) \bigg]^T,
\end{equation}
where 
$$\stdual\equiv\stdual(\parametervec)\equiv \begin{bmatrix} \unroll( \dual_1^1(\parametervec), \hdots,
\dual_1^{N_t} (\parametervec) ) & \cdots & \unroll( \dual_{N_t}^1(\parametervec), \hdots,
\dual_{N_t}^{N_t} (\parametervec) ) \end{bmatrix}\in\RR{\nstate\ntimesteps\times
\ntimesteps},$$
with 
$\dual_i^n:\parameterdomain\rightarrow\RR{\nstate}$, $i,n=1,\ldots,\ntimesteps$
is implicitly defined as the solution to Eq.~\eqref{eq:stadjoint} and
denotes $N_t$ space--time \textit{dual vectors}. The adjoint 
method subsequently computes the dual-weighted residual as
\begin{equation} \label{eq:dwrFormula}
	\dualweightedresidual(\parametervec)  =  -\stdual(\parametervec)^T
	\stresidual(\ststateApprox\paramDepoptional;\parametervec). 
\end{equation} 
Due to the sparsity pattern of the space--time residual Jacobian transpose, 
this approach is tantamount to performing a linear backward-in-time solve,
and must be computed \textit{after} the approximate-solution sequence has been
fully computed, as this sequence defines the argument at which the space--time
residual Jacobian is evaluated.

%
%
%

	Analogously to the observations made about error bounds in the Section
	\ref{sec:errorbounds}, 
	we now make several observations about the dual-weighted residual: 
\begin{enumerate}
\item The dual-weighted residual is
		\textit{residual-based}, i.e., it corresponds to a linear combination of
		elements of the space--time residual, where the weights in the linear
		combination are provided by the dual according to
		Eq.~\eqref{eq:dwrFormula}.
	\item The dual-weighted residual exhibits dependence on \textit{non-local}
		quantities, i.e., computing the dual-weighted residual using either the
		direct method \eqref{eq:dwrForward} or the adjoint method
		\eqref{eq:dwrFormula} requires solving a linear system either
		forward-in-time or backward-in-time over the entire time domain. Further, the
		sparsity pattern of the operators in Eq.~\eqref{eq:dual_final} illustrates that the QoI
		error at the $n$th time instance depends on quantities computed at all
		previous time instances.
	\item The dual-weighted residual may be an \textit{inaccurate} approximation
		of the error, i.e., it is a first-order approximation valid in the limit 
	$\norm{\ststate\paramDepoptional - \ststateApprox\paramDepoptional} = 
		\sqrt{\sum_{n=1}^{\ntimesteps}(\stateErrorArg{\tindx}\paramDepoptional)^2}
		\rightarrow 0$, 
		and thus 
	can be inaccurate for cases where the normed state error is not
		negligible. This is precisely the case we are interested in, as we aim to
		model this error.
	\item Dual-weighted residuals can be \textit{difficult to compute}, i.e., they
		require solving either the forward sensitivity \eqref{eq:stforward} or adjoint
		equations \eqref{eq:stadjoint}, each of which is characterized by a block triangular
		system matrix of dimension $\nstate\ntimesteps\times \nstate\ntimesteps$,
		with $\nstate\ntimesteps$
		the space--time dimension of the original full-order-model problem. The
		use of approximate solutions aims to avoid any computations that scale
		with the dimension $\nstate$, and thus computing the dual-weighted
		residual in this manner is often impractical. To mitigate this
		computational burden, analysts often adopt the adjoint method, solve
		the adjoint equations \eqref{eq:stadjoint} using a coarse model (e.g., a
		coarse mesh), and prolongate the computed (coarse) dual vector to a
		representation compatible with the (fine) full-order model in order to
		compute the dual-weighted residual \cite{VENDITTI2000204}. In addition to
		these computational-cost issues, adopting the adjoint method introduces
		other challenges, as it requires access to the residual-Jacobian transpose
		to solve Eq.~\eqref{eq:stadjoint}, which may not be available.
\end{enumerate} 

\subsubsection{Discussion}

This work aims to overcome the most prominent shortcomings of \textit{a
posteriori} error bounds and dual-weighted residuals: (1) their lack of
sharpness or inaccuracy, and (2) the difficulty in their computation.  We aim
to achieve this by applying modern machine-learning methods within a
regression setting, wherein the \textit{regression model} is provided by a
recursive time-series-prediction model that can capture dependencies on
non-local quantities, the \textit{features} correspond to residual-based
time-local error indicators that appear in the definitions of the \textit{a
posteriori} error bounds and dual-weighted residuals, and the
\textit{response} corresponds to the time-local normed state or QoI error.
Thus, we employ the classical methods for error quantification to guide the
design of the both the regression model and features.  We also aim to
construct a statistical model of the response error 
by considering several noise
models.  The next section describes the proposed methodology, which adopts
this strategy.



\section{\methodName}\label{sec:ml_framework}

We now present the proposed  \methodName\ (\methodAcronym) method, which
constructs a mapping from residual-based features to a random variable for the
error using regression techniques that can capture dependencies on non-local
quantities.  The method comprises an extension of the machine-learning error
modeling (MLEM) framework presented in Ref.~\cite{freno_carlberg_ml}---which
was applicable to parameterized static systems---to parameterized
dynamical systems. Because the method is applicable to modeling both the QoI
error $\qoiErrorArg{\tindx}(\parametervec)$ and normed-state error
$\stateErrorArg{\tindx}(\parametervec)$, we denote the error of interest
generically as $\errorGen^\tindx(\parametervec)$ in this section.

\subsection{Machine-learning framework}\label{sec:mlFramework}
The proposed method aims to predict errors on a \textit{subset} of the
time grid
$t^n = n\Delta t$, $n=0,\ldots,
\ntimesteps$ used to define the FOM O$\Delta$E \eqref{eq:fom_discretized}. We make
this choice because the time step $\Delta t$ is typically chosen to control
time-discretization errors, while we may be
interested in modeling the error at only a (relatively small) subset of these
grid points; furthermore, constructing a time-series model of these errors 
on a coarser time grid reduces the computational cost
of training the regression model.
We define this coarse time grid as $t^n = n\Delta t$, $n\in\timeIndexSet$, where
$\timeIndexSet\subseteq\{1,\ldots,\ntimesteps\}$ denotes the coarse time-index
set comprising $\ntimestepsCoarse\defeq\card{\timeIndexSet}(\leq \ntimesteps)$
time instances and equipped with a mapping
$\timeIndexMappingNo:\{0,\ldots,\ntimestepsCoarse\}\rightarrow\{0,\ldots,\ntimesteps\}$
from a coarse time index to the corresponding fine time index, which satisfies $\tau(0) = 0$.

\begin{figure}
\begin{centering}
\includegraphics[trim={0.0cm 9cm 0cm 6cm},clip,width=1.0\textwidth]{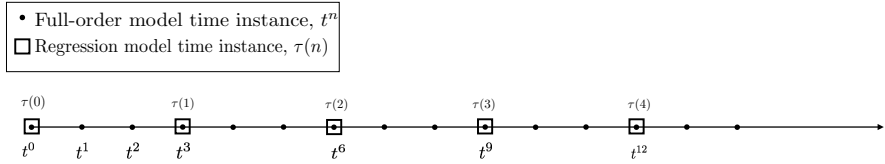}
\caption{Example of time discretizations for the full-order model and machine
	learning model. In this case, the regression model is solved on a temporal
	grid that is three times as coarse as the approximate and full-order models such 
	that $\timeIndexSet = \{3,6,9,12,\ldots \}$.
	}
\label{fig:time_grid}
\end{centering}
\end{figure}

We assume that we can compute \textit{features}
$\featurenmu\in\mathbb{R}^{\nfeatures}$ that are informative of the
\textit{response}
$\errorGennmu\in\RR{}$---which is the time-local error of
interest---at time instances $n\in\timeIndexSet$. Section
\ref{sec:featureEngineering} provides candidate features, which are inspired
by the classical error-quantification methods described in Section
\ref{sec:classicalError}. Given the ability to compute these features, we
propose to construct error models 
$\errorModelGennmu(\approx\errorGennmu)$ of
the form
\begin{equation}\label{eq:error_split}
\errorModelGennmu  =  \errorApproxMeanGennmu + \errorApproxRandGennmu,\quad
	\tindx \in \timeIndexSet,
\end{equation}
where $\errorApproxMeanGenn:\parameterdomain\rightarrow \RR{}$,
$n\in\{0\}\cup\timeIndexSet$ denotes the sequence of
\textit{deterministic regression-function models}
and $\errorApproxRandGenn:\parameterdomain \rightarrow \randomvars{}$,
$n\in\{0\}\cup\timeIndexSet$ denotes a sequence of mean-zero random variables
that serve as the \textit{stochastic noise models}
such that 
 $\errorModelGenn:\parameterdomain\rightarrow\randomvars{}$,
 $n\in\timeIndexSet$ with
$\expectation{\errorModelGennmu} = \errorApproxMeanGennmu$ 
 ; we denote the set of real-valued random variables
by $\randomvars{}$. 
%
%

As previously described, we propose to construct the regression
model using a recursive time-series-prediction model that can capture
dependencies on non-local quantities.
In particular, we impose the following structure on the 
regression-function model:
\begin{equation}\label{eq:ml_deterministic}
\errorApproxMeanGennmu =
	\approxregressionfunction(\featurenmu,\latentstatefnmu),\quad
	\tindx \in \timeIndexSet.
\end{equation}
Here, $\approxregressionfunction: \RR{\nfeatures}\times\RR{\nlatentf}\rightarrow
\RR{}$, while
$\latentstatefn:\parameterdomain\rightarrow \RR{\nlatentf}$,
$n\in\{0\}\cup\timeIndexSet$ denotes the sequence of \textit{latent
variables}. Critically, the inclusion of
these latent variables enables the proposed method to capture dependencies on non-local
quantities, as we enforce the latent variables to be governed
by the recursion
\begin{align}\label{eq:latentStateRecursion}
	\latentstatefnmuArg{\timeIndexMapping{n}} &=
	\recursionf(\featurefnmuArg{\timeIndexMapping{n}},
	\latentstatefnmuArg{\timeIndexMapping{n-1}},
	\errorApproxMeanGennmunArg{\timeIndexMapping{n-1}}
	),\quad
	n=1,\ldots,\ntimestepsCoarse,
\end{align}
where $\recursionf: 
\RR{\nfeatures}\times
\RR{\nlatentf}\times\RR{}
\rightarrow
\mathbb{R}^{\nlatentf}$; 
$\errorApproxMeanGennmunArg{0}=\errorGennmuArg{0}$ is given; and 
$\latentstatefnmuArg{0}=\zero$.
Similarly, we impose the following structure on the noise model:
\begin{equation}\label{eq:ml_stochastic}
	\errorApproxRandGennmunArg{\timeIndexMapping{n}}
=
	\approxnoisefunction(\errorApproxRandGennmunArg{\timeIndexMapping{n-1}},\noisenArg{\timeIndexMapping{n}}),\quad
	n=1,\ldots,\ntimestepsCoarse.
\end{equation}
Here, $\approxnoisefunction:
\randomvars{}\times\randomvars{}\rightarrow
\randomvars{}$, 
$\errorApproxRandGennmunArg{0}=0$, 
and $\noisenArg{n} \in \randomvars{}$,
$n\in\timeIndexSet$ denotes a sequence of independent and identically distributed mean-zero
random variables. 
\begin{remark}[Classical non-recursive regression]
We note that classical non-recursive regression methods can be described by
omitting latent variables from the regression function approximation such that 
$\approxregressionfunction(\featurenmu,\latentstatefnmu)\equiv
\approxregressionfunction(\featurenmu)
$,
omitting latent-variable dynamics \eqref{eq:latentStateRecursion}, and
	employing a simple
non-recursive noise model 
such that
$\errorApproxRandGennmunArg{n} = \noisenArg{n}
	$, $n\in\timeIndexSet$. 
\end{remark}
\begin{remark}[Noise models]
We note that more sophisticated functional forms of $\approxnoisefunction$
could be considered, e.g., by allowing parameters that define the
	probability distribution of 
$\noisenArg{n}$ for $n\in\timeIndexSet$ to
depend on features, or by including latent variables as inputs to the function
$\approxnoisefunction$.  
\end{remark}

The primary distinction between the proposed \methodAcronym\ method and the
methods
proposed in Refs.~\cite{freno_carlberg_ml,trehan_ml_error} is the
consideration of recursive regression models that employ
latent variables. The MLEM method~\cite{freno_carlberg_ml} considered only
static problems and thus did not require the use of such variables.  The
EMML framework~\cite{trehan_ml_error} accounted for dependencies on non-local
quantities by explicitly considering features corresponding to quantities computed
at previous time instances. This introduces difficulties, as it requires
knowledge about the memory of the system (i.e., how many previous time
instances to consider in constructing the features) and can lead to
a prohibitively large set of features. In contrast, the use of latent variables
and the specific functional forms
\eqref{eq:ml_deterministic}--\eqref{eq:ml_stochastic} allow the proposed \methodAcronym\ method to capture
non-local effects naturally.

As in Ref.~\cite{freno_carlberg_ml}, we aim to construct regression models
that exhibit the following properties:
\begin{Objective}
	\item\label{obj:cost} \textbf{Low cost}. The error model 
	$\errorModelGennmu$	
		should be computationally
		inexpensive to deploy, i.e., its evaluation should not dominate the
		computational cost of computing the approximate solutions 
		$\stateApprox^\tindx(\parametervec)$, $\tindx=0,\ldots,\ntimesteps$.
		This requirement implies that
		the features 
		$\featurenmu$ should be cheap to compute, the latent-variable dimension $\nlatentf$ 
		should be relatively small, and it should be inexpensive to evaluate the
		functions $\approxregressionfunction$, $\recursionf$,	 and 
		$\approxnoisefunction$.
	\item\label{obj:lowvar} \textbf{Low noise variance}. Because the variance of
		the noise model $\errorApproxRandGennmu$ 
		reflects the magnitude of
		epistemic uncertainty introduced by the use of an approximate solution,
		reducing this variance in turn reduces the uncertainty
		introduced into the analysis. A model with
		low noise variance has
		not been underfit to training data.
	\item\label{obj:generalize} \textbf{Generalizability}. For the regression
		model to be trustworthy, the distributions of the true error
		$\errorGennmu$ and the error model $\errorModelGennmu$ should closely
		match on an independent test set, which was not used to train the error
		model. A model exhibiting generalizability has not been overfit to
		training data.
\end{Objective}

We adopt the machine-learning framework proposed in
Ref.~\cite{freno_carlberg_ml} to satisfy these objectives.  This framework
comprises the following steps: 

\begin{Step}
\item \label{step:featureEng}\textbf{Feature engineering.} Select features that are inexpensive to
	compute (\ref{obj:cost}), informative of the error such that a
		low-noise-variance model can be constructed (\ref{obj:lowvar}),
		and low dimensional (i.e., $\nfeatures$ small) such that less training
		data are required for the resulting model to generalize
		(\ref{obj:generalize}). To satisfy these requirements, we engineer
		residual-based features informed by classical error-quantification methods
		as described in Section \ref{sec:classicalError}. We ensure these features
		are low dimensional and inexpensive to compute through the use of
		sampling-based approximations (e.g., gappy principal component analysis).
		Section \ref{sec:featureEngineering} describes this step.
\item \label{step:dataGen}\textbf{Data generation.} After selecting candidate features, 
	generate both training and test data in the form of a set of
		response--feature pairs on
		the coarse time-index set $\timeIndexSet$ for parameter instances sampled
		according to the imposed probability distribution on the parameter domain
		$\parameterdomain$.
		Constructing these data requires computing \textit{both} the FOM solutions
		and approximate solutions for the selected parameter instances. Sections
		\ref{sec:training}--\ref{ssec:pca}
		describe this step.
\item\label{step:regressionModel} \textbf{Train the deterministic regression-function model $\errorApproxMeanGennmu$.} 
Define candidate functional forms for the functions
		$\approxregressionfunction$ and $\recursionf$, compute model parameters by
		(approximately) minimizing a loss function defined on
		the training data, and perform hyperparameter selection
		using (cross-)validation. Sections \ref{sec:regressionfunction}--\ref{sec:regressionTraining}
		describes this step.
\item\label{step:noiseModel}\textbf{Train the stochastic noise model $\errorApproxRandGennmu$.}
	Finally, define candidate functional forms for the function
		$\approxnoisefunction$, and compute model parameters via maximum
		likelihood estimation on a training set independent from that used to
		train the deterministic regression function. In this work, we employ a
		subset of the test set used to evaluate the deterministic regression
		function for this purpose.
		Sections~\ref{sec:noise_modeling}--\ref{sec:noise}
		describe this step.

\end{Step}

\subsubsection{Feature engineering}\label{sec:featureEngineering}
We now describe \ref{step:featureEng} of the proposed framework: feature
engineering. Namely, we describe the candidate features $\featurevec^\tindx$ that we consider,
which are inspired by those proposed in Ref.~\cite{freno_carlberg_ml}.
Several considerations drive feature engineering, which balance
\ref{obj:cost}--\ref{obj:generalize}:
\begin{Consideration}
\item \textbf{Feature cost.} To satisfy \ref{obj:cost}, we aim to
	develop features that are inexpensive to compute. Satisfying this
		objective can compromise quality, as high-quality features are often
		costly to compute.
	\item \textbf{Feature quality.} To satisfy \ref{obj:lowvar}, we
		aim to develop features that are as informative of the error as possible,
		because employing high-quality explanatory features can lead to a
		low-noise-variance regression model.
	\item\label{cons:numberFeatures} \textbf{Number of features.} Employing a large number of features often
	leads to a high-capacity model that can yield low noise variance with
		sufficient training data (i.e., \ref{obj:lowvar} is bolstered). However, computing
		a large number of features can be computationally expensive (i.e., 
		\ref{obj:cost} suffers), and often requires a large amount of training
		data to generalize well and avoid overfitting (i.e., 
		\ref{obj:generalize} suffers). Regularization strategies can be employed
		when training a regression model with a large number of features to mitigate overfitting.
\end{Consideration}

The remainder of this section details a variety of candidate features that differ
in terms of the above considerations.  


\begin{Feature}
		\item\label{feat:params} \textbf{Parameters}:
			The system's parametric inputs $\parametervec$
			correspond to free-to-compute but low-quality features, where the number
			of features is equal to the number of parameters
			$\nparams$. Simply setting  $\featurenmu = \parametervec$ comprises
			the same feature choice as that employed by the classical
model-discrepancy approach \cite{ken_ohag}, which uses Gaussian-process
regression to construct the regression model.
These features are time independent.
\item \label{feat:t}\textbf{Time}: The time variable $t^n$ is a single,
	free-to-compute feature, which is likely to be low quality in most
	scenarios. This low quality arises from the fact that---in most
	applications---the error is not driven by the time variable itself. Instead,
	the classical error-quantification methods discussed in Section
	\ref{sec:classicalError} suggest that the error is instead determined by the
	residual, Lipschitz constants, and dual vectors. Intuitively, employing time
	as a feature can cause the resulting regression model to be ``tied'' to the
	particular time coordinates associated with the behavior observed during
	training. This implies that the resulting regression method will lack
	invariance with respect to the time grid, i.e., the model will not
	generalize well for problems characterized by phase shifts or for
	prediction beyond the training time interval.  The EMML framework 
	considered time as a feature \cite[Table I]{trehan_ml_error}. 
\item \label{feat:rnorm}\textbf{Residual norm}:
	As discussed in Section \ref{sec:classicalError}, classical
		error-quantification methods are residual based. In particular, 
		the first term on the right-hand side of \textit{a posteriori} error bound~\eqref{eq:finalGenResult}
		demonstrates that the residual norm
	$\norm{ \resApproxArg{\tindx}
	}$, where
		$\resApproxArgNo{\tindx}:\parametervec\mapsto\resnROMParamsArg{\stateROMArg{n}(\parametervec)}$,
		is
		informative of the state error
	$\stateErrorArg{n}(\parametervec)$; bound~\eqref{eq:finalGenResultQoI} shows that
		the residual norm 
		$\norm{ \resApproxArg{\tindx}
	}$
		is also informative of the magnitude of the QoI
		error $\abs{\qoiErrorArg{\tindx}(\parametervec)}$.  This feature is
		computationally expensive to compute, as it requires
		computing all $\nfom$ elements of the vector $\resApproxArg{\tindx}$
		before computing its norm.
		However, this (single) feature is likely of higher quality than the both \ref{feat:params}
		and \ref{feat:t}, as it appears explicitly
		in classical \textit{a posteriori} error bounds
		\eqref{eq:finalGenResult}--\eqref{eq:finalGenResultQoI}. Yet, 
		this feature
		will likely be of lower quality for predicting the (signed) QoI error
		$\qoiErrorArg{\tindx}(\parametervec)$ because
		it is not informative of the error's sign.
		We note
		that---in contrast to static systems---the
		residual norm 
	$\norm{ \resApproxArg{\tindx}
	}$	
		(along with stability constants) is not sufficient to 
bound the errors $\stateErrorArg{n}(\parametervec)$
	and 
	$\abs{\qoiErrorArg{\tindx}(\parametervec)}$.
		Instead, inequalities~\eqref{eq:finalGenResult}--\eqref{eq:finalGenResultQoI}
		illustrate that these bounds depend additionally on non-local quantities,
		namely $\stateErrorArg{n-j}(\parametervec)$, $j=1,\ldots,\kArg{n}$. Thus, we expect this feature
		choice---as well as all subsequent residual-based features---to perform best
		when used with recursive regression methods that can capture dependencies on
		non-local quantities.
	We note that the ROMES method
		\cite{carlberg_ROMES} employed features
$\featurevec =  \norm{ \resApproxArg{} }$ with Gaussian-process
		regression to construct a regression model for the normed state error.

\item \label{feat:r}\textbf{Residual}: To increase the number of features
	relative to \ref{feat:rnorm} yet maintain high feature quality, one may employ 
	all $\nfom$ elements of the residual vector 
	$\resApproxArgNo{\tindx}(\parametervec)$ as features.
	This has the effect of changing the number of features
	from one extreme to another, as we assume the full-order-model dimension
	$\nfom$ is large.
	Employing $\nfom$ features 
	can enable very high-capacity regression models, which can lead to
	low noise variance given sufficient training data; however, the amount of
	training data needed to ensure generalizability may be excessive. Further, 
	these features are costly to compute.
	These features are of high quality, as the residual not only appears in the
	\textit{a posteriori} error bounds
	\eqref{eq:finalGenResult}--\eqref{eq:finalGenResultQoI}, but it also appears
	in the dual-weighted residual discussed in Section \ref{sec:DWR}. Namely,
	inserting Eq.~\eqref{eq:dwrFormula} into Eq.~\eqref{eq:dwrFirstOrder} shows that the QoI
	error can be approximated to first order by a linear combination of
	space--time residual elements. 
\item \textbf{Residual principal components}\label{feat:params_rpca}
	$\resPCAApproxArg{\tindx}$.
	As proposed in Ref.~\cite{freno_carlberg_ml}, to retain the high quality of
	\ref{feat:r}, but reduce the number of features 
	such that less
	training data is needed for generalization,
	we can employ \textit{principal components} of the residual as features. In particular, 
one can use  
$$\resPCAApproxArg{\tindx} \defeq
	\basismat_{\residual}^T  (\resApproxArg{\tindx}- \bar{\resApprox})$$ as
	features, where the columns of 
$\basismat_{\residual} \in \RRstar{\nfom\times\npca}$ 
comprise the first $\npca (\ll \nfom)$ principal components of the training set
$\residualTrainSet\subset\RR{\nfom}$ (which will be described in Section \ref{ssec:pca}), and $\overline{\resApprox}
\defeq \frac{1}{\card{\residualTrainSet} }
\sum_{\resApprox\in\residualTrainSet}\resApprox$ denotes the training-set mean.
These features are costly to compute, as they require computing all $\nfom$
elements of the residual $\resApproxArg{\tindx}$ before applying projection
with the
principal-component matrix $\basismat_{\residual}$. However, these features
retain the high quality of the residual vector if the truncation level $\npca$
is set such that the principal components explain most of the variance of the
data in the
training set $\residualTrainSet$. Critically,
this approach associates with using $\npca$ features and thus constitutes a
compromise between \ref{feat:rnorm} and \ref{feat:r}: it will lead to a higher-capacity model
than can be obtained using \ref{feat:rnorm}, but will 
likely generalize with significantly less training data than would be the case
with \ref{feat:r}.

\item \textbf{Residual gappy principal components}\label{feat:params_rgpca}:
 This approach aims to address the primary shortcoming of
 \ref{feat:params_rpca}: the high computational cost incurred by the
 need to compute all $\nfom$ elements of the residual $\resApproxArg{\tindx}$.
In particular, this feature choice aims to approximate the residual principal components
using the gappy POD method~\cite{everson_sirovich_gappy}, which
replaces orthogonal projection with
a sampling-based linear least-squares approximation to realize computational-cost savings.
Specifically, introducing a sampling matrix
$\sampMat \in
\{0,1\}^{\nsample\times\nfom } $ comprising $\nsample$ selected rows of the
identity matrix
(with
$\npca\leq\nsample\ll\nfom$), the residual gappy principal components are
$$\resGappyApproxArg{\tindx} \defeq
[\sampMat\basismat_{\residual}]^{+} \sampMat ( \resApproxArg{\tindx}- \overline{\resApprox} ).$$
In this work, we construct the sampling matrix $\sampMat$ according to the
q-sampling method~\cite{qdeim_drmac}; Appendix \ref{appendix:qsampling}
presents the associated algorithm. 
We note that the number of features for this method is
the same as that for \ref{feat:params_rpca}, and the feature quality is
similar to that of \ref{feat:params_rpca} if least-squares reconstruction
yields similar results to orthogonal projection.
However, computing these features requires
computing only $\nsample(\ll\nfom)$ elements of the residual vector 
$\resApproxArg{\tindx}$; thus, this feature choice is substantially less
costly to compute than \ref{feat:params_rpca}.

\item \textbf{Sampled residual}\label{feat:params_pr}:
	\ref{feat:params_rgpca} constitutes an affine transformation
	applied to the sampled residual $\sampMat\resApproxArg{\tindx}$. Thus, it is
	sensible to consider employing only the underlying sampled residual
	$\sampMat\resApproxArg{\tindx}$ as features, given that many regression
	techniques are likely to discover this affine transformation if it yields
	superior prediction performance.
	Further, employing the sampled residual does not
	require computing the residual principal components
	$\basismat_{\residual}$ (although q-sampling employs this matrix to
	construct the
	sampling matrix $\sampMat$). We note that the number of features in this case
	is $\nsample(\geq\npca)$.
\end{Feature}
In the numerical experiments, we consider feature-engineering methods that
employ different combinations of the above features. However, all considered
methods use at most one of the residual-based features, i.e., at most one of
\ref{feat:rnorm}--\ref{feat:params_pr}. Table~\ref{tab:feature_summary}
summarizes the feature-engineering methods considered in the numerical
experiments. 

\begin{table}
\centering
\renewcommand{\arraystretch}{2}
\begin{tabular}{ p{5cm} c c  c }\hline
Description & Features $\featurevec^\tindx$&  Number of features $\nfeatures$
	& 
\begin{tabular}{@{}c@{}}
	Number of residual 
	\vspace{-0.3cm}
	\\ elements required\end{tabular}	
	\\ \hline
Parameters & $\parametervec$ &  $N_{\parametervec}$  & 0   \\
Parameters and time & $[\parametervec;t^{\tindx}]$ & $\nparams + 1$ & $0$ \\
Residual norm & $\norm{ \resApproxArg{\tindx} }$ &  $1$  & $\nfom$  \\
Parameters and \newline residual norm & $[\parametervec; \norm{\resApproxArg{\tindx}}]$ &  $\nparams + 1$  & $\nfom$  \\
Parameters, residual norm, \newline and time & $[\parametervec;\norm{\resApproxArg{\tindx}} ;t^{\tindx}]$ &  $\nparams + 2$  & $\nfom$   \\
Parameters and residual  & $[\parametervec;\resApproxArg{\tindx} ]$ &  $\nparams+ \nfom$  & $\nfom$   \\
Parameters, residual, \newline and time  & $[\parametervec;\resApproxArg{\tindx};t^{\tindx} ]$ &  $\nparams + \nfom + 1$  & $\nfom$   \\
Parameters and residual \newline principal components & $[\parametervec; \resPCAApproxArg{\tindx} ]$ &  $\nparams + \npca$  & $\nfom$   \\
Parameters, residual \newline principal components, and time & $[\parametervec;\resPCAApproxArg{\tindx} ; t^{\tindx}]$ &  $\nparams  + \npca + 1$  & $\nfom$   \\
Parameters and gappy \newline residual principal components & $[\parametervec;\resGappyApproxArg{\tindx}]$ &  $\nparams + \npca$  & $\nsample$  \\
Parameters, gappy residual \newline principal components, and time & $[\parametervec;\resGappyApproxArg{\tindx};t^{\tindx} ]$ &  $\nparams + \npca + 1$  & $\nsample$  \\
Parameters and sampled residual & $[\parametervec;\sampMat \resApproxArg{\tindx}]$ &  $\nparams + \nsample$  & $\nsample$  \\
 Parameters, sampled \newline residual, and time & $[\parametervec;\sampMat\resApproxArg{\tindx} ;t^{\tindx} ]$ &  $\nparams + \nsample + 1$  & $\nsample$  \\
\hline
\end{tabular}
\caption{Summary of considered feature-engineering methods.
	}
\label{tab:feature_summary}
\end{table}

\subsubsection{Data generation}\label{sec:training}
We now describe \ref{step:dataGen} of the proposed framework: data generation.
After selecting the features, the method generates response--feature pairs
that comprise the training, validation, and test sets used to select the deterministic
regression-function model and stochastic noise model, respectively.
Specifically, we first define 
$$\dataFunction(\parameterdomainDum,\timeIndexSetDum) \defeq \{ (
\errorGennmuArg{n},\featurefnmuArg{n} )\, |\, \parametervec \in \parameterdomainDum, \tindx \in \timeIndexSetDum \}$$
as the set of response--feature  pairs generated by computing the full-order
and approximate states 
for $\parametervec\in\parameterdomainDum$ and computing the
features and errors at time instances $t^n$ with $n \in
	\timeIndexSetDum\subseteq\{1,\ldots,\ntimesteps\}.$
We then define the training, validation, and test data as 
	\begin{equation}\label{eq:trainTestSet}
		\trainSet \defeq \dataFunction(\parameterdomainTrain,\timeIndexSetTrain),\qquad
\valSet \defeq
		\dataFunction(\parameterdomainVal,\timeIndexSetVal),\qquad
\testSet \defeq \dataFunction(\parameterdomainTest,\timeIndexSetTest),
	\end{equation}
respectively,
where
$\parameterdomainTrain,\parameterdomainVal,\parameterdomainTest \subset
	\parameterdomain$; and
$\timeIndexSetTrain\defeq\{\timeIndexMapping{1},\ldots,\timeIndexMapping{\card{\timeIndexSetTrain}}\}\subseteq \timeIndexSet$, 
	$\timeIndexSetVal\defeq\{\timeIndexMapping{1},\ldots,\timeIndexMapping{\card{\timeIndexSetVal}}\}\subseteq \timeIndexSet$,
	and $\timeIndexSetTest
\defeq\{\timeIndexMapping{1},\ldots,\timeIndexMapping{\card{\timeIndexSetTest}}\}
\subseteq \timeIndexSet $. For example, 
$\parameterdomainTrain$, $\parameterdomainVal$, and $\parameterdomainTest$ can be generated by
drawing samples from a probability distribution defined on the parameter domain
$\parameterdomain$, and the time indices can be set to
	$\timeIndexSetTrain=\timeIndexSetVal=\timeIndexSetTest=\timeIndexSet$ (which we do in the numerical
experiments).
\subsubsection{Training data for principal component analysis and q-sampling}\label{ssec:pca} 
\ref{feat:params_rpca} and \ref{feat:params_rgpca}
require computing principal
components of the residual; \ref{feat:params_rgpca} and \ref{feat:params_pr}
require constructing the sampling matrix $\sampMat$, which we do via
q-sampling~\cite{qdeim_drmac} (which in turn requires
residual principal components).
In these cases, computing residual principal
components requires the additional training set
\begin{equation}\label{eq:residualTrainSet}
	\residualTrainSet \defeq  \{ \resApproxArg{\tindx} \,|\,
\parametervec \in \parameterdomainTrain, \tindx \in \timeIndexSetTrain \}.
\end{equation}
%
Because principal component analysis is an unsupervised machine learning
method, it does not require validation or test sets.
\subsubsection{Regression function}\label{sec:regressionfunction}
We now describe \ref{step:regressionModel} of the proposed framework:
training the deterministic regression-function model $\errorApproxMeanGennmu$
of the form \eqref{eq:ml_deterministic}--\eqref{eq:latentStateRecursion}. In
particular, this step constructs the mappings
$\approxregressionfunction$
and $\recursionf$.
To achieve this, we consider a variety of both recursive and non-recursive
regression methods from classical time-series analysis and supervised machine
learning. In particular, we consider three categories of regression methods:
\begin{Category}
\item \label{cat:nonrecursive}\textbf{Non-recursive.} Non-recursive regression methods do not include
	latent variables or latent-variable dynamics. In this case, the latent variables can
	be omitted from the regression function arguments, i.e., 
$\approxregressionfunction(\featurevec,\latentstate)\equiv
\approxregressionfunction(\featurevec)$, and the mapping $\recursionf$
	is not employed.
\item \label{cat:recursiveLatentPred}\textbf{Recursive with latent state equal to previous prediction.}
	This category of methods employs latent variables, but sets them equal to
	the predicted response at the previous time instance, i.e.,
		$
	\latentstatefnArg{\timeIndexMapping{n}} = 
		\errorApproxMeanGennnArg{\timeIndexMapping{n-1}}$,
		$n=1,\ldots,\ntimestepsCoarse$.
This is equivalent to prescribing the recursion function as
		simply
	\begin{equation} \label{eq:recursionCase2}
\recursionf:(\featurevec,\latentstate,
	\errorApproxMeanGen
	)\mapsto 
	\errorApproxMeanGen.
\end{equation} 
These methods place no particular restriction on the mapping
	$\approxregressionfunction$.
\item \label{cat:recursive}\textbf{Recursive.} This category of methods follows the general
	framework outlined in Section \ref{sec:mlFramework} with no particular
	restrictions on the forms of the mappings
$\approxregressionfunction$ or $\recursionf$.
\end{Category}
Table~\ref{tab:regression_summary} summarizes the considered
methods and their associated categories.

\begin{table}
\begin{small}
\centering
\renewcommand{\arraystretch}{2}
\begin{tabular}{l c c c c }\hline
Description &  Description & Classification & Latent dynamics & Latent dimension $\nlatentf$\\
\hline
	 $k$-nearest neighbors & kNN & \ref{cat:nonrecursive} & None  & N/A\\
	 Artificial neural network & ANN &  \ref{cat:nonrecursive}  & None & N/A \\
 	 Autoregressive w/ exogenous inputs & ARX  &\ref{cat:recursiveLatentPred}  & Linear & 1 \\
 Integrated artificial neural network &	ANN-I &  \ref{cat:recursiveLatentPred} & Linear & 1\\
	Latent autoregressive w/ exogenous inputs & LARX  &  \ref{cat:recursive}  & Linear & Arbitrary\\
	Recurrent neural network &RNN &  \ref{cat:recursive}  & Nonlinear & Arbitrary\\
	Long short-term memory  network  &LSTM &  \ref{cat:recursive} & Nonlinear & Arbitrary\\
\hline
\end{tabular}
	\caption{Summary of considered regression methods.}
\label{tab:regression_summary}
\end{small}
\end{table}

\begin{itemize}


\item \textbf{$k$-nearest neighbors (kNN)} (\ref{cat:nonrecursive}): kNN is a
	simple regression method wherein the prediction comprises a weighted average
		of the nearest neighbors in feature space such that
		\begin{equation}
\approxregressionfunction :\featurevec
			\mapsto \frac{1}{k} \sum_{
				(\bar\errorGen,\bar\featurevec)\in\trainSetKNN(\featurevec)} w(
			\featurevec,\bar\featurevec ) \bar\errorGen
		\end{equation}
where
		$\trainSetKNN(\featurevec)\subseteq\trainSet$	with
		$\card{\trainSetKNN(\featurevec)}=k$ denotes the $k$ response--feature
		pairs of the
		training set $\trainSet$ whose features are closest to the vector $\featurevec$ in
		feature space. Common choices for the weights are uniform weights,
		$w:(\featurevec,\bar\featurevec)\mapsto 1/k$ and weights based on
		Euclidean distance, i.e.,
\begin{equation} 
	w:(\featurevec,\bar\featurevec)\mapsto  \frac{ (
	\sum_{(\tilde\errorGen,\tilde\featurevec)\in\trainSetKNN(\featurevec)} \norm{
		\featurevec - \tilde\featurevec }^{-1} )^{-1} }
{\norm{ \featurevec-\bar\featurevec  } } . 
\end{equation} 
The hyperparameters associated with this method correspond to the number of
		neighbors $k$, and the definition of the weights $w$. We note that all
		parameters characterizing this model are hyperparameters; there are no
		parameters that are subject to training via optimization.
\item \textbf{Artificial neural network (ANN)} (\ref{cat:nonrecursive}):
	Feed-forward ANNs (i.e., multilayer perceptrons) employ function composition
		to create nonlinear input--output maps.  In the
		context of error modeling, Ref.~\cite{freno_carlberg_ml} observed ANNs to
		yield the best performance for static systems. In the present context,
		feed-forward ANNs can be used to construct the (parameterized)
		non-recursive regression function that satisfies
$\approxregressionfunction(\featurevec,\latentstate) 
	\equiv
\approxregressionfunction(\featurevec) 
$
and is given by
\begin{equation*}
\approxregressionfunction : (\featurevec;\weightsf) \mapsto
	\activation_{\nlayers} ( \cdot ,\weights_{\nlayers} ) \circ \cdots
	\circ \activation_1 ( \featurevec; \weights_{1} ) ,
\end{equation*}
where 
		$\weightsf\equiv(\weights_1,\ldots,\weights_{\nlayers})$ with $\weights_i
		\equiv(\weightsActivation_i , \bias_i)\in\RR{p_i\times p_{i-1}}\times
		\RR{p_i}$, $i=1,\ldots,\nlayers$ denotes
		neural-network weights and biases;
		$\activation_i(\cdot;\weights_i): \RR{p_{i-1}} \rightarrow \RR{p_{i}}$,
		$i=1,\ldots,\nlayers$ denotes the activation function employed at the
		$i$th layer; $d$ denotes the depth of the network; and
		$p_i$, $i=0,\ldots,\nlayers$ (with $p_0 = \nfeatures$ and $p_\nlayers=1$) denotes
		the number of neurons at layer $i$. The activation
		function takes the form
$$ \activation_i : (\dummyArg;\weights_i) \mapsto
		\activationf_i(\weightsActivation_i \dummyArg+ \bias_i),\quad
		i=1,\ldots,\nlayers,$$
		where $\activationf_i: \RR{p_{i}} \rightarrow \RR{p_{i}}$,
		$i=1,\ldots,\nlayers$ denote activations (e.g., 
		rectified linear unit
		(ReLU), hyperbolic tangent) applied elementwise.


In this work, we consider only fully connected feed-forward ANNs with ReLU activation functions. As such,
we consider the number of layers $\nlayers$ and the number of neurons at each layer $p_i$,
$i=1,\ldots,\nlayers-1$ to be hyperparameters.

\item \textbf{Autoregressive with exogenous inputs (ARX)}
	(\ref{cat:recursiveLatentPred}):
Autoregressive methods are linear models that have been widely adopted for time-series
		analysis. The ARX model is an extension of the autoregressive (AR) model that
		allows for exogenous inputs, which we treat as the features. ARX$\arxargs$
		methods regress the response at given
		time index as a linear combination of the value of the response at the 
		previous $\arxarga$ time instances and the exogenous inputs at both the
		current time instance and the previous 
		$\arxargb-1$ time instances. This work considers the ARX$(1,1)$ model, in which case 
		the (parameterized) regression function is given by
\begin{equation} 
	\approxregressionfunction:(\featurevec,\latentstate;\weightsf)
\mapsto 
\ARXfweightsOne^T \featurevec+ \ARXfweightsTwo\latentstate  + \scalarbias,
\end{equation} 
		where
		$\weightsf\equiv(\ARXfweightsOne,\ARXfweightsTwo,\scalarbias)\in\RR{\nfeatures}\times\RR{}\times \RR{}$
		denote the regression-function weights and bias, 
		and the latent variable
		(of dimension $\nlatentf=1$)
		corresponds to the 
	predicted response at the previous time instance such that 
	$
	\latentstatefnArg{\timeIndexMapping{n}} = 
		\errorApproxMeanGennnArg{\timeIndexMapping{n-1}}$,
		$n=1,\ldots,\ntimestepsCoarse$
		and
	the
	latent-variables dynamics are prescribed via \eqref{eq:recursionCase2}. 

	We remark that classical autoregressive
models typically include the noise term directly within the recursion,
i.e., the latent variables $\latentstatefnArg{\timeIndexMapping{n}}$
correspond to the regression-function prediction perturbed by stochastic noise.
As a result, generating statistical predictions requires simulating an
ensemble of model realizations. To avoid this complication, this work
constructs the stochastic noise model \textit{separately} from the
deterministic regression-function model; for the stochastic noise models considered 
in this work, the statistical distribution of the response is given analytically.
Sections \ref{sec:noise_modeling}--\ref{sec:noise} describe this step. We also
note that it is
possible to construct higher-order autoregressive models (i.e., AR($p,q$) with
$p>1$ and/or $q>1$) that include additional lagged states. While we do not
consider such approaches in this work, the ARX formulation presented above could be modified
to address this case by changing the definition of the latent variables and
recursion function. 
\item \textbf{Integrated artificial neural network (ANN-I)}
	(\ref{cat:recursiveLatentPred}):
To facilitate time-series modeling, it is common to 
apply transformations to the training data in order to make the associated
processes weakly
stationary. The most commonly adopted transformation is time differencing. This
process leads to \textit{integrated} methods. Here, we investigate the use
of an ``integrated" artificial neural network (ANN-I) model, wherein a neural
network is used to predict the
time-differenced error. This leads to a modification of the ANN model wherein
the (parameterized) recursive regression
function takes the form
\begin{equation*}
\approxregressionfunction :
	(\featurevec,\latentstate;\weightsf)
	\mapsto \latentstate +
	\activation_{\nlayers} ( \cdot ,\weights_{\nlayers} ) 
	\circ\cdots
	\circ \activation_1 (
	\featurevec , \weights_{1} ) ,
\end{equation*}
where the parameters
		$\weightsf\equiv(\weights_1,\ldots,\weights_{\nlayers})$ and activations
	$\activation_{i}$,	 $i=1,\ldots,\nlayers$
		are defined as in the ANN model and 
		the latent variable 
		(of dimension $\nlatentf=1$)
		corresponds to the 
	predicted response at the previous time instance such that 
	$
	\latentstatefnArg{\timeIndexMapping{n}} = 
		\errorApproxMeanGennnArg{\timeIndexMapping{n-1}}$,
		$n=1,\ldots,\ntimestepsCoarse$
		and
	\eqref{eq:recursionCase2} holds.  
As with ANN,
we consider only fully connected feed-forward networks with ReLU
activation functions such that the hyperparameters correspond to 
the number of layers $\nlayers$ and the number of neurons at each layer $p_i$,
$i=1,\ldots,\nlayers-1$. 
\item \textbf{Latent autoregressive with exogenous inputs (LARX)}
	(\ref{cat:recursive}):
	Because ARX falls into \ref{cat:recursiveLatentPred}, it employs the
	predicted response at the previous time instance as the latent variables
	according to Eq.~\eqref{eq:recursionCase2}. The LARX method relaxes this
	assumption and considers higher-dimensional latent variables governed by
	general linear
	dynamics.
	This equips the model with a higher capacity than ARX and thus enables it to capture
	more complex non-local dependencies. In particular, LARX defines the (parameterized) regression
	function as a linear mapping from the latent variables to the prediction
	that satisfies
	$\approxregressionfunction(\featurevec,\latentstate) 
	\equiv
\approxregressionfunction(\latentstate) 
$ and is
	given by
\begin{equation*}
\approxregressionfunction : (\latentstate;\weightsf)
\mapsto 
\weights_1^T \latentstate + \scalarbias
\end{equation*}
where $\weightsf \equiv (\weights_1 , \scalarbias)
\in\RR{\nlatentf}\times\RR{}
$. The latent-variable recursion operator
is linear in its first two arguments such that
$
\recursionf(\featurevec,\latentstate,
	\errorApproxMeanGen
	)\equiv
\recursionf(\featurevec,\latentstate
	)
$ with
\begin{equation*} 
\recursionf:(\featurevec,\latentstate
	; \weightsg)\mapsto
        \weightsmat_{\feature} \featurevec + \weightsmat_{\latentstate}
				\latentstate + \bias_{\latentstate}, 
\end{equation*} 
where $\weightsg\equiv(\weightsmat_{\feature},\weightsmat_{\latentstate},\bias_{\latentstate})
\in\RR{N_{\latentstate} \times \nfeatures}\times
\RR{N_{\latentstate} \times N_{\latentstate}}\times\RR{\nlatentf}
$.

LARX is characterized by one hyperparameter: the number of latent variables
$\nlatentf$. 
\item \textbf{Recurrent neural network (RNN)} (\ref{cat:recursive}):
RNNs~\cite{rnn} comprise a particular neural-network architecture that have
been widely adopted for modeling time sequences.  The main idea of an RNN is to
employ the same feed-forward neural network for prediction at each point in a sequence,
but to treat the hidden layer of the neural network as inputs to the neural
network at the next point in the sequence; these hidden layers associate with the latent variables.
RNNs comprise a generalization of LARX models: LARX methods develop a linear recursion for the latent-variable 
dynamics, while RNNs enable nonlinear latent-variable dynamics.
Figure~\ref{fig:RNN} illustrates the computational graph for an ``unrolled'' RNN. 
An RNN of depth $d$ employs latent variables
$\latentstate \equiv \left(
	\latentstate_1, \ldots, \latentstate_{\depth}\right),
$ 
with 
$\latentstate_i\in\RR{p_i}$, $i=1,\ldots,\nlayers$ and
$\nlatentf=\sum_{i=1}^d p_i$.  For RNNs, the
deterministic regression-function model is given by an output activation
function acting on the latent variables at the final layer;
it
satisfies
$\approxregressionfunction(\featurevec,\latentstate) 
	\equiv
\approxregressionfunction(\latentstate) 
$  
and is given by
\begin{equation}\label{eq:RNNactivation}
\approxregressionfunction : (\latentstate;\weightsf) \mapsto
\activationfscalar(\weightsActivationVec_f^T \latentstate_{\depth}+ \scalarbias_f)
\end{equation}
where 
$\weightsf \equiv (\weightsActivationVec_f , \scalarbias_f) \in \RR{
	\nneurons_d}\times \RR{} $ and
		$\activationfscalar(\cdot)$ is typically the identity operator.

		The latent variables satisfy the recursive relationships
\begin{equation}\label{eq:RNNrecursion}
\latentstatefnArg{\timeIndexMapping{n}}_i = 
	\activationRecurrent_i(  \latentstatefnArg{\timeIndexMapping{n}}_{i-1},
	\latentstatefnArg{\timeIndexMapping{n-1}}_i ; \weights_i ),\quad i
	=1,\ldots,\nlayers,\quad n=1,\ldots,\ntimestepsCoarse,
%
\end{equation}
where
$\activationRecurrent_i(\cdot,\cdot,\weights_i) : \RR{\nneurons_{i-1}} \times
\RR{\nneurons_i} \rightarrow \RR{\nneurons_i}$, $i=1,\ldots,\nlayers$ 
denote activation functions and  the latent state at layer zero is defined
from the input features such that
$
\latentstatefnArg{\timeIndexMapping{n}}_{0} =
\latentstatezero(\featurefnArg{\timeIndexMapping{n}})$
with $\latentstatezero:\RR{\nfeatures}\rightarrow\RR{{\nneurons_0}}$.
These recursive relationships can be expressed by a latent-variable
recursion operator, which satisfies $
\recursionf(\featurevec,\latentstate, 
	\errorApproxMeanGen
	)\equiv
\recursionf(\featurevec,\latentstate
	)
$  
and is given by
\begin{equation}\label{eq:RNNrecursionOp}
\recursionf : (\featurevec,\latentstate
	;\weights_{\recursionf}) \mapsto 
\begin{bmatrix}
	\activationRecurrent_1(  \latentstatezero(\featurevec) , \latentstatefnArg{}_1   ;  \weights_1 ) \\
\activationRecurrent_2(   \cdot ,\latentstatefnArg{}_2 ; \weights_2 ) \circ \activationRecurrent_1( \latentstatezero(\featurevec),  \latentstatefnArg{}_1  ;  \weights_1 )  \\
 \vdots \\
\activationRecurrent_d(   \cdot ,\latentstatefnArg{}_d ; \weights_d ) \circ \cdots \circ  \activationRecurrent_1(\latentstatezero(\featurevec),  \latentstatefnArg{}_1  ;  \weights_1 )
\end{bmatrix}
,
\end{equation}
where $\weights_{\recursionf} \equiv (\weights_1 ,\ldots, \weights_d
)$.
 
For standard RNNs, the function applied at the $i$th layer takes the form
\begin{equation}\label{eq:activation_recurrent}
\activationRecurrent_i : ( \latentstate_{i-1}, \latentstate_{i}; \weights_i)
	\mapsto \activationf_i ( \weightsActivation_{i,1} \latentstate_{i-1} +
	\weightsActivation_{i,2} \latentstate_{i} + \bias_i ),
\end{equation}
where 
$\weights_i \equiv
(\weightsActivation_{i,1},\weightsActivation_{i,2},\bias_i)
\in\RR{\nneurons_i\times\nneurons_{i-1}}\times\RR{\nneurons_i\times\nneurons_i}\times\RR{\nneurons_i}
$, $i=1,\ldots,d$
denote the
recursive weights and biases,
$\activationf_i : \RR{\nneurons_{i}}\rightarrow\RR{\nneurons_i}$,
$i=1,\ldots,\nlayers$ again denote activations (e.g.,
ReLU) applied elementwise, and $\latentstatezero$ is the identity operator
such that $\latentstatezero^{\timeIndexMapping{n}}
=\featurefnArg{\timeIndexMapping{n}}$, $n=1,\ldots,\ntimestepsCoarse$ and $p_0
=\nfeatures$.

Different RNN architectures can be obtained, for example, by modifying the
activation functions, the number of latent variables, or the network depth.
The LARX method can be recovered from RNN by setting the depth $\depth=1$ and
setting $\activationfscalar(\cdot)$ and $\activationf_1(\cdot)$ to be the
identity mappings.   We consider hyperparameters corresponding to the
number of neurons at each layer and the depth of the network.

\begin{figure}
\centering
\begin{subfigure}[t]{1.\textwidth}
\includegraphics[trim={0cm 5cm 10cm 5cm},clip,width=1.\linewidth]{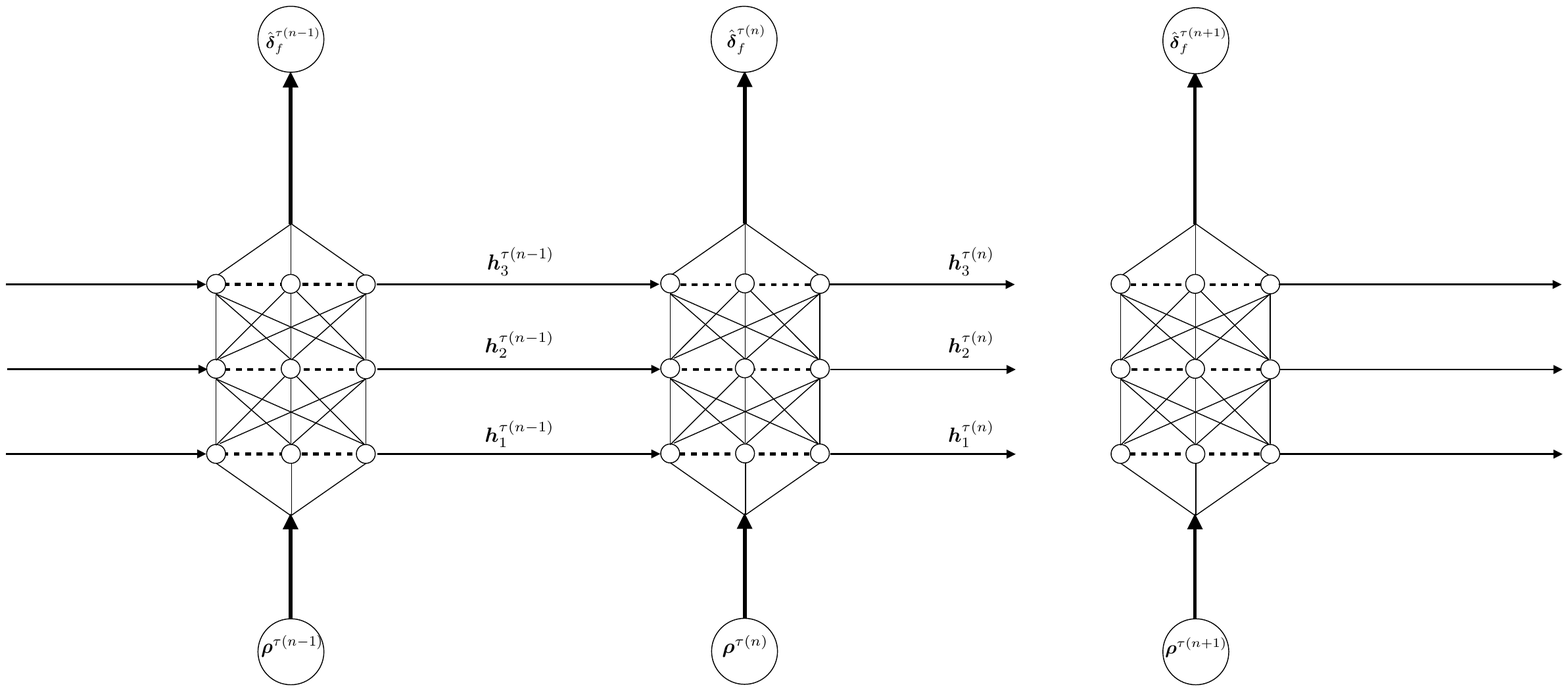}
\end{subfigure}

\caption{Diagram of an ``unrolled" recurrent neural network with depth
	$\depth=3$.}
\label{fig:RNN}
\end{figure}

\item \textbf{Long short-term memory network (LSTM)} (\ref{cat:recursive}):
The LSTM network~\cite{lstm} is a particular type of RNN designed to capture
dependencies on long-term non-local quantities. Figure~\ref{fig:LSTM} provides
a diagram of an ``unrolled" LSTM. LSTMs are able to capture long-term
non-local dependencies through the use of two latent variables: the
\textit{hidden state} and the \textit{cell state}. The cell state, which is
depicted as the topmost horizontal line running through the cells in
Figure~\ref{fig:LSTM}, undergoes only minor interactions~\cite{colah_blog}:
elementwise multiplication and elementwise addition.  Due to the minor
changes experienced by the cell state, it can (in principle) more easily
capture long-term dependencies. The LSTM architecture is arguably the most
widely adopted RNN architecture as of this writing. 

Formally, an LSTM decomposes latent variables at the $i$th layer as
$\latentstate_i\equiv\left(\latentstateh_i,\cellstate_i\right)$,
where $\latentstateh_i \in \RR{\nneurons_i/2}$ and
$\cellstate_i \in \RR{\nneurons_i/2}$ denote the hidden state and cell state, respectively.
The deterministic regression function for LSTMs is defined as in
\eqref{eq:RNNactivation}, but with elements $\nneurons_i/2+1$ to $\nneurons_i$
of the weighting vector $\weightsActivationVec_f$ set to zero so that the
output depends only on $\latentstateh_\nlayers$.


The latent-variable recursion is also defined as in
\eqref{eq:RNNrecursion}--\eqref{eq:RNNrecursionOp}; however, the activations
take a special form based on the LSTM ``cell''.
In particular, the LSTM cell corresponds to defining the $i$th layer's
function as
\begin{equation}\label{eq:activation_recurrent}
\activationRecurrent_i : ( \latentstate_{i-1}, \latentstate_{i}; \weights_i)
\mapsto \begin{bmatrix}
\activationRecurrent_{\sigma,i} ( \latentstateh_{i-1} , \latentstateh_i;
	\weights_{i,1} ) 
		\odot \tanh (
\activationRecurrent_{\text{cell},i}
(\latentstateh_{i-1},\latentstate_{i};\weights_{i}
)		) \\
		\activationRecurrent_{\text{cell},i}
(\latentstateh_{i-1},\latentstate_i;\weights_{i}
)
\end{bmatrix},
\end{equation}
where 
$\latentstate_{i}\equiv(\latentstateh_i,\cellstate_i)$, 
$\weights_i\equiv(\weights_{i,1},\ldots,\weights_{i,4})$, 
and $\odot$ denotes the elementwise (Hadamard)
product, and
\begin{equation*}
\activationRecurrent_{\text{cell},i}:
(\latentstateh_{i-1},\latentstate_{i};\weights_{i}
)
\mapsto
\activationRecurrent_{\sigma,i}
		(\latentstateh_{i-1},\latentstateh_i;\weights_{i,2}) \odot \cellstate_i + 
    \activationRecurrent_{\sigma,i}
		(\latentstateh_{i-1},\latentstateh_i;\weights_{i,3}) \odot
		\activationRecurrent_{\tanh,i}(\latentstateh_{i-1},\latentstateh_i;\weights_{i,4})
\end{equation*}
	denotes the cell activation. Here,
\begin{align*}
	&\activationRecurrent_{\sigma,i}:(\latentstateh_{i-1},\latentstateh_{i};\weights_{i,j})\mapsto
	\sigma( \weightsActivation_{i,j,1} \latentstateh_{i-1} +
	\weightsActivation_{i,j,2} \latentstateh_{i} + \bias_{i,j} ), \quad
	j=1,\ldots,3\\
	&\activationRecurrent_{\tanh,i}:(\latentstateh_{i-1},\latentstateh_{i};\weights_{i,4})\mapsto
\tanh( \weightsActivation_{i,4,1} \latentstateh_{i-1} +
	\weightsActivation_{i,4,2} \latentstateh_{i} + \bias_{i,4} ),
\end{align*}
where
$\weights_{i,j}\equiv(\weightsActivation_{i,j,1},\weightsActivation_{i,j,2},\bias_{i,j})\in
\RR{\nneurons_i/2\times\nneurons_{i-1}/2}\times\RR{\nneurons_i/2\times\nneurons_i/2
}\times\RR{\nneurons_i/2}$, $i=1,\ldots,\nlayers$, $j=1,\ldots,4$
denote the sigmoid and hyperbolic tangent operators, respectively. For LSTM,
the latent state at layer zero is defined via
$\latentstatezero:\featurevec\mapsto[\featurevec^T\ \zero^T]^T$ such that
such that $\latentstateh_0^{\timeIndexMapping{n}}
=\featurefnArg{\timeIndexMapping{n}}$, $n=1,\ldots,\ntimestepsCoarse$ and $p_0
=2\nfeatures$.

Different types of LSTM networks can be constructed, e.g., 
by
modifying 
the definition of the LSTM cell. For instance, the ``peephole" LSTM 
network can be obtained by replacing the hyperbolic tangent function in Eq.~\eqref{eq:activation_recurrent} 
with the identity function.
We consider hyperparameters corresponding to the
number of neurons $\nneurons_i$, $i=1,\ldots,\nlayers$ and the network depth
$\depth$.  


\begin{figure}
\centering
\includegraphics[trim={0cm 3cm 9.6cm 3cm},clip,width=1.0\linewidth]{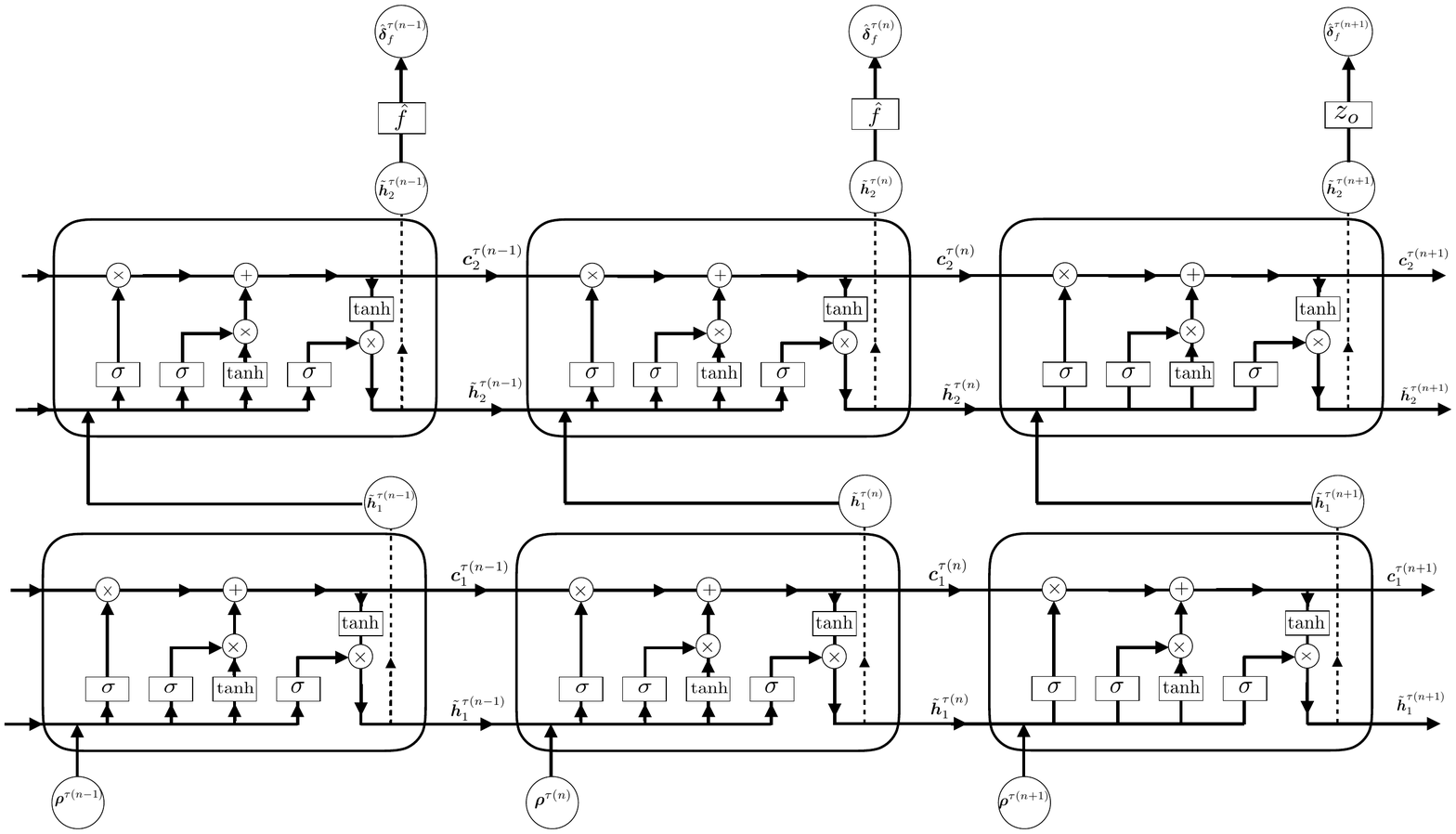}
\caption{Diagram for a deterministic regression-function model utilizing the LSTM architecture with a depth of $\depth=2$. Figure concept adopted from
	Ref.~\cite{colah_blog}.}
\label{fig:LSTM}
\end{figure}
\end{itemize}


To summarize, this section outlined a variety of non-recursive and recursive
regression-function models. This exposition started with a description of the
kNN and ANN regression-function models. These regression-function models are
non-recurrent and employ no latent variables; non-local effects cannot be
captured with kNN and ANN. To capture non-local effects, we introduced the ARX and ANN-I
regression-function models. Both of these models employ a single
latent variable corresponding to the prediction at the previous time instance;
both methods are thus characterized by linear latent dynamics with a latent
dimension of $\nlatentf = 1$. To increase the capacity of the latent-dynamics
recursion, we introduced the LARX regression-function model. Unlike ARX
and ANN-I, LARX employs an arbitrary number of latent variables governed by
general linear dynamics. Finally, to further increase the capacity of the
latent dynamics recursion, we introduced the RNN and LSTM regression-function
models. These methods comprise an extension of LARX as, in addition to
supporting an arbitrary number of latent variables, they allow for nonlinear
latent-variable dynamics. The regression-function models described in this
section are thus characterized by the following hierarchy of models with increasing
capacity: no latent variables (kNN, ANN), one latent variable with linear
latent-variable dynamics (ARX, ANN-I), an arbitrary number of latent
variables with linear latent-variable dynamics (LARX), and an
arbitrary number of latent variables with nonlinear latent-variable dynamics
(RNN, LSTM).

\subsubsection{Regression-function training}\label{sec:regressionTraining}

To train a regression model, we employ a validation procedure consisting of
two loops: the outer loop modifies hyperparameters characterizing the model, while the inner loop computes optimal model parameters $\weightsf^\star$ and
$\weightsg^\star$ via optimization executing on the training set $\trainSet$
given fixed hyperparameter values.  The final model is chosen based on
hyperparameter values that yield the smallest MSE on the validation set
$\valSet$.
In the case of limited data, this process can be replaced by executing
cross-validation within the training set $\trainSet$; this obviates the need
for an independent validation set $\valSet$.

The inner-loop optimization problem depends on the category within which the
regression model falls.\footnote{We note that kNN is characterized by only
hyperparameters and thus the inner loop does not require solving an
optimization problem.}
\begin{enumerate} 
\item 
For regression models in \ref{cat:nonrecursive}, we compute optimal model parameters
$\weightsf^\star$ as the solution to the minimization problem
\begin{align}\label{eq:nonrecursiveOpt}
\begin{split}
	\underset{\weightsf}{\text{minimize}}&\ \sum_{\parametervec \in
	\parameterdomainTrain}\sum_{
		n\in\timeIndexSetTrain}  \norm{ 
\errorGennmuArg{n}	
-
	\approxregressionfunction(\featurefnmuArg{n};\weightsf)
	}^2 + \regTermf(\weightsf),
\end{split}
\end{align}
where $\regTermf(\cdot)\in\RR{}_+$ is a regularization operator, e.g., LASSO,
ridge, elastic--net. We note that optimization problem
\eqref{eq:nonrecursiveOpt} is nonrecursive, as the prediction at one time
instance does not affect the objective-function contribution associated with
the prediction at the next time instance.
\item 
For regression models in \ref{cat:recursiveLatentPred}, we can train the
models in one of two ways. The first approach is non-recursive in nature and
computes the optimal model parameters
$\weightsf^\star$ as the solution to the minimization problem
\begin{align}\label{eq:recursiveLatentPredOpt}
\begin{split}
	\underset{\weightsf}{\text{minimize}}&\ \sum_{\parametervec \in
	\parameterdomainTrain}\sum_{
		n=1}^{\card{\timeIndexSetTrain}}  \norm{ 
\errorGennmuArg{\timeIndexMapping{n}}	
-
\approxregressionfunction(\featurefnmuArg{\timeIndexMapping{n}},\errorGennmuArg{\timeIndexMapping{n-1}};\weightsf)
	}^2 + \regTermf(\weightsf).
\end{split}
\end{align}
Because the actual error $\errorGennmuArg{\timeIndexMapping{n-1}}$ 
from the training data is employed
as the second argument in $\approxregressionfunction$ rather than the
predicted value 
$
		\errorApproxMeanGennnArg{\timeIndexMapping{n-1}}$ (which is the
actual
value of the hidden state), the optimization problem
\eqref{eq:recursiveLatentPredOpt} is non-recursive.
We refer to this 
approach as non-recursive training (NRT).
\item 
Finally, regression models in either 
\ref{cat:recursiveLatentPred} and \ref{cat:recursive}
can be trained via a dynamics-constrained recursive training strategy. In this approach, the 
optimal weights $\weightsf^\star$ and $\weightsg^\star$ are the solution to the problem
\begin{align}\label{eq:BPTTopt}
\begin{split}
	\underset{\weightsf,\weightsg}{\text{minimize}}&\ \sum_{\parametervec \in
	\parameterdomainTrain}\sum_{
		n\in\timeIndexSetTrain}  \norm{ 
\errorGennmuArg{n}	
-
	\approxregressionfunction(\featurefnmuArg{n},\latentstatefnmuArg{n};\weightsf)
	}^2+\regTermf(\weightsf) + \regTermg(\weightsg)\\
	\text{subject to}\ &
	\latentstatefnmuArg{\timeIndexMapping{n}} =
	\recursionf(\featurefnmuArg{\timeIndexMapping{n}},\latentstatefnmuArg{\timeIndexMapping{n-1}},
	\errorApproxMeanGennmunArg{\timeIndexMapping{n-1}}
	;\weightsg),\quad
\parametervec \in
	\parameterdomainTrain,\ 
	n=1,\ldots,\card{\timeIndexSetTrain},
\end{split}
\end{align}
where $\regTermg(\cdot)\in\RR{}_+$ is a regularization operator and the
latent-variable dynamics appear explicitly as constraints.
In this approach, the prediction
$\errorApproxMeanGennnArg{\timeIndexMapping{n-1}}(\parametervec)$ \textit{does} affect the
contribution to the objective function at time instance $\timeIndexMapping{n}$ and
parameters $\parametervec$ through
the constraints. Thus, this formulation reflects the recursive nature of the
regression model and can enable more accurate long-time predictions at the
expense of more computationally expensive training. We refer to this approach 
as recursive training (RT). We note that the ``backpropagation through time" (BPTT) 
method~\cite{Mozer1989AFB,robinson:utility,Goodfellow-et-al-2016} is a popular approach for solving the minimization 
problem~\ref{eq:BPTTopt}. We additionally note that for models in
\ref{cat:recursiveLatentPred}, problem \eqref{eq:BPTTopt} can be derived by
substituting 
$
\errorGennmuArg{\timeIndexMapping{n-1}}\leftarrow
\errorApproxMeanGennmunArg{\timeIndexMapping{n-1}}
$
in problem \eqref{eq:recursiveLatentPredOpt}.
\end{enumerate}

In the outer loop, for each candidate
value of the hyperparameters, optimization problem \eqref{eq:nonrecursiveOpt}, \eqref{eq:recursiveLatentPredOpt}, or
\eqref{eq:BPTTopt} is solved for parameters $\weightsf^\star$ and
$\weightsg^\star$, and the resulting model is evaluated on the
validation set $\valSet$. For regression models in
\ref{cat:nonrecursive}, we evaluate the validation criterion
\begin{align}\label{eq:valOne}
\begin{split}
	\sum_{\parametervec \in
	\parameterdomainVal}\sum_{
		n\in\timeIndexSetVal}  \norm{ 
\errorGennmuArg{n}	
-
	\approxregressionfunction(\featurefnmuArg{n};\weightsf^\star)
	}^2,
\end{split}
\end{align}
while for methods in \ref{cat:recursiveLatentPred} or \ref{cat:recursive},
we evaluate the validation criterion
\begin{equation}\label{eq:valTwo}
	\sum_{\parametervec \in
	\parameterdomainVal}\sum_{
		n\in\timeIndexSetVal}  \norm{ 
\errorGennmuArg{n}	
-
	\approxregressionfunction(\featurefnmuArg{n},\latentstatefnmuArg{n};\weightsf^\star)
	}^2,
\end{equation}
where the dynamics of the latent variables are defined as
	$\latentstatefnmuArg{\timeIndexMapping{n}} =
	\recursionf(
	\featurefnmuArg{\timeIndexMapping{n}},
	\latentstatefnmuArg{\timeIndexMapping{n-1}},
	\errorApproxMeanGennmunArg{\timeIndexMapping{n-1}}
	;\weightsg^\star)$,
$\parametervec \in
	\parameterdomainVal$,
	$n=1,\ldots,\card{\timeIndexSetVal}$. For each regression model, the
	hyperparameter values yielding the smallest validation-criterion value are selected.

\subsubsection{Noise model}\label{sec:noise_modeling}
We now describe \ref{step:noiseModel} of the proposed framework: training the
stochastic noise model $\errorApproxRandGennmu$ of the form
\eqref{eq:ml_stochastic}.  While complex noise models could be considered in
principle, for the sake of simplicity, we consider here only three types of
noise models: a homoscedastic Gaussian model, a homoscedastic Laplacian model,
and an autoregressive model.

\begin{table}
\centering
\begin{tabular}{c c c}\hline
Description &Noise Distribution &  Time-dependent  \\ \hline
Stationary Gaussian & Gaussian & No\\
Stationary Laplacian & Laplacian & No\\
Autoregressive 1 (AR1) & Gaussian & Yes\\
\hline
\end{tabular}
	\caption{Summary of considered noise models.}
\label{tab:noise_summary}
\end{table}

\begin{itemize}
\item \textbf{Stationary Gaussian homoscedastic noise model}:\label{noise:gaussian}
Arguably, the simplest way to quantify the discrepancy between the error model
		and the true error is to assume the errors to be uncorrelated, Gaussian,
		stationary, and homoscedastic. The resulting error model is independent of
		the prediction at the previous time instance such that
		$\approxnoisefunction(\errorApproxRandGenDummy,\noiseDummy)\equiv
		\approxnoisefunction(\noiseDummy)
		$
		with
\begin{equation}\label{eq:noise1}
\approxnoisefunction: \noiseDummy \mapsto
	\noiseDummy, 
\end{equation}
		and defining $\noisenArg{n} \in \randomvars{}$,
$n\in\timeIndexSet$ as a sequence of independent and identically distributed
		mean-zero Gaussian random variables, i.e.,
		\begin{equation}\label{eq:seqGaussian}
\noisenArg{n} \sim \normalDistribution(0,\std^2),\quad n\in\timeIndexSet.
		\end{equation}
This model does not have any latent variables 
and requires estimating only the variance parameter $\std^2$.

\item \textbf{Stationary Laplacian homoscedastic noise model}:\label{noise:laplace}
This approach is similar to the Gaussian model, but instead models the random
		variables as Laplacian, i.e., \eqref{eq:noise1} holds with
$$\noisenArg{n}
		\sim \laplaceDistribution(0,b),\quad n\in\timeIndexSet,$$
where $b>0$ denotes the diversity, which must be estimated.
Again, this error model does not require any input features or latent variables.
\item \textbf{Autoregressive Gaussian homoscedastic discrepancy model:}\label{noise:ar1}
Finally, we consider an autoregressive noise model, as we expect errors to
		display time correlations. We refer to this model as AR1.
		The model is given by
\begin{equation*}
\approxnoisefunction : (\errorApproxRandGen,\noiseDummy) \mapsto  c
	\errorApproxRandGen +  \noiseDummy ,
\end{equation*}
where $c \in \RR{}$ is a model constant, and 
		defining $\noisenArg{n} \in \randomvars{}$,
$n\in\timeIndexSet$ as a sequence of independent and identically distributed
		mean-zero Gaussian random variables such that \eqref{eq:seqGaussian} holds.
This method requires estimating the parameters $c$ and $\sigma^2$.  The auto-regressive model produces a 
series of (correlated) mean-zero Gaussian distributions where the variance at time-instance $\timeIndexMapping{n}$ is
defined recursively as
\begin{equation}\label{eq:ar1_var}
	\sigma^{\timeIndexMapping{n}}_{\noise} \defeq \sqrt{\sigma^2 +
	\sum_{i=1}^{n-1}  (c \sigma^{\timeIndexMapping{i}} )^2}, \qquad n = 1,\ldots,\ntimestepsCoarse.
\end{equation}
\end{itemize}

\subsubsection{Noise-model training}\label{sec:noise}
The stationary Gaussian, stationary Laplacian, and AR1 noise models require
specification of the parameters $\sigma^2$, $b$, and $( c, \sigma^2)$,
respectively. We compute these parameters via
maximum likelihood estimation (MLE) on a training set
$\trainSetNoise\subseteq\testSet$, and evaluate the performance of the noise model on a test
set $\testSetNoise\subseteq\testSet$, where $\trainSetNoise\cap\testSetNoise=
\emptyset$ and
\begin{equation} \label{eq:trainSetNoise}
\trainSetNoise\defeq\dataFunction(\paramtrainsetNoise,\timeIndexSetTrainNoise),\qquad
\testSetNoise\defeq\dataFunction(\paramtestsetNoise,\timeIndexSetTestNoise).
\end{equation} 
In this work, we employ 
$\paramtrainsetNoise, \paramtestsetNoise\subseteq \parameterdomainTest$ 
with 
$\paramtrainsetNoise\cap \paramtestsetNoise=\emptyset$ and
$\paramtrainsetNoise\cup \paramtestsetNoise=\parameterdomainTest$ 
with
$\timeIndexSetTrainNoise=\timeIndexSetTestNoise=\timeIndexSet$.
For the stationary Gaussian model, the MLE estimate for
$\sigma^2$ is given by
\begin{equation}\label{eq:trainStatGauss}
	\sigma^2_\text{MLE} = \frac{1}{\card{\trainSetNoise}}  
	\sum_{\parametervec\in\paramtrainsetNoise}\sum_{n\in\timeIndexSetTrainNoise}	
	 ( \errorGennmuArg{n}  - \errorApproxMeanGennmunArg{n} )^2 .
\end{equation}
For the stationary Laplacian model, the MLE estimate for $b$ is given by
\begin{equation}\label{eq:trainStatLaplace}
b_\text{MLE} =
	\frac{1}{\card{\trainSetNoise}}  
	\sum_{\parametervec\in\paramtrainsetNoise}\sum_{n\in\timeIndexSetTrainNoise}	
	 | \errorGennmuArg{n}  - \errorApproxMeanGennmunArg{n} |.
\end{equation}
Lastly, for the AR1 model, the MLE estimates for the parameters are given by
\begin{align}\label{eq:trainAR1}
	\begin{split}
	& c_\text{MLE} = \frac{ \sum_{\parametervec \in
	\paramtrainsetNoise}\sum_{\tindx=1}^{\card{\timeIndexSetTrainNoise}} (
	\errorGennmuArg{\timeIndexMapping{\tindx-1}} -
	\errorApproxMeanGennmunArg{\timeIndexMapping{\tindx-1}} ) (
	\errorGennmuArg{\timeIndexMapping{\tindx}} -
	\errorApproxMeanGennmunArg{\timeIndexMapping{\tindx}}) }
{ \sum_{\parametervec \in
	\paramtrainsetNoise}\sum_{\tindx=1}^{\card{\timeIndexSetTrainNoise}} ( \errorGennmuArg{\timeIndexMapping{\tindx-1}} - \errorApproxMeanGennmunArg{\timeIndexMapping{\tindx-1}})^2 },\\ 
& \sigma^2_\text{MLE} = \frac{1}{ \card{\trainSetNoise} } \sum_{\parametervec \in
	\paramtrainsetNoise}\sum_{\tindx=1}^{\card{\timeIndexSetTrainNoise}} (
	c_\text{MLE} \errorApproxRandGennmunArg{\timeIndexMapping{\tindx-1}}  -  ( \errorGennmuArg{\timeIndexMapping{\tindx}} - \errorApproxMeanGennmunArg{\timeIndexMapping{\tindx}})  )^2 .
	\end{split}
\end{align} 

\section{Numerical Experiments}\label{sec:numerical_experiments}

This section investigates the performance of the proposed \methodAcronym\
methods.  In particular, we consider all combinations of the 13
feature-engineering methods reported in Table \ref{tab:feature_summary} with
the 7 deterministic regression function models listed in Table
\ref{tab:regression_summary}. For the regression methods in
\ref{cat:recursiveLatentPred} (i.e., ARX and ANN-I), we construct two separate
regression-function models: one using non-recursive training (NRT) and one
using recursive training (RT) as described in
Section~\ref{sec:regressionTraining}.  In total, we consider 117 candidate
\methodAcronym\ methods.  We also compare the performance of the
\methodAcronym\ methods with that achieved by time-local Gaussian-process (GP)
regression in parameter space~\cite{pagani_enkfrom};
Appendix~\ref{appendix:gp} describes this method in detail.

We consider three examples. The first two examples employ approximate
solutions generated by a ROM constructed via Galerkin projection (as discussed
in Section \ref{sec:romsec}), while the third example considers approximate
solutions generated by a coarse-mesh lower-fidelity model 
(as discussed in Section \ref{sec:LFM}).

\subsection{Implementation and model evaluation details}

We now provide details on the implementation of the machine-learning
regression methods, training algorithms, hyperparameter grids, and evaluation
metrics. 
\subsubsection{Training and model selection of machine learning
algorithms}\label{sec:trainingModelSel}

We implement $k$-nearest neighbors and GP regression
algorithms with Scikit-learn~\cite{scikit-learn}, and all other
machine-learning regression methods with
Keras~\cite{chollet2015keras}. We employ
the following workflow to train and test 
all machine-learning regression models:
\begin{enumerate}
	\item\label{step:timeIndexSet} Define the coarse time grid $\timeIndexSet$ on which regression
	methods will operate.
\item\label{step:trainValTest} Define the training set 
	$\trainSet$, validation set $\valSet$,
	and test set 
		$\testSet$ according to \eqref{eq:trainTestSet} with
		$\timeIndexSetTrain=\timeIndexSetVal=\timeIndexSetTest=\timeIndexSet$ and
		$\parameterdomainTrain$, $\parameterdomainVal$, and $\parameterdomainTest$ generated by drawing
		independent samples of the parameters 
		$\parametervec\sim \mathcal{U}(\parameterdomain)$, where $\mathcal{U}(\cdot)$ denotes the
		uniform distribution. We ensure that $\card{\trainSet} = 4\card{\valSet}$
		(i.e., we employ an 80/20 split between training and validation).
	\begin{itemize}
	\item For~\ref{feat:params_rpca},~\ref{feat:params_rgpca},
		and~\ref{feat:params_pr}, additionally extract the residual training set
$\residualTrainSet$
			from $\parameterdomainTrain$ and $\timeIndexSetTrain$ according to
			\eqref{eq:residualTrainSet}.
	\end{itemize}
\item \label{train:normalizedata} Standardize features and responses according to their respective means and variances on the training set $\trainSet.$ 
\item \label{step:defineGrid}Define the hyperparameter grid for each considered regression
	model.
\item \label{train:train} For each value of the hyperparameters, train the
	regression model by solving optimization problem \eqref{eq:nonrecursiveOpt},
		\eqref{eq:recursiveLatentPredOpt}, or \eqref{eq:BPTTopt}, depending on the
		category of the method.  For the ARX, LARX, ANN, ANN-I, RNN, and LSTM
		models, we employ the Adam algorithm~\cite{Kingma2015AdamAM} with early
		stopping (assessed on a 20\% holdout set) for this purpose. For regression methods employing 
		the recursive training (RT) approach, as described in Section~\ref{sec:regressionTraining}, BPTT 
		is used to solve the optimization problem. Within the
		optimization problems, we employ ridge regularization such that
		$\regTermf(\cdot)\equiv\alpha\|\cdot\|_2^2$ and
		$\regTermg(\cdot)\equiv\alpha\|\cdot\|_2^2$, where $\alpha\in\RR{}_+$ is a
		regularization hyperparameter.  As the Adam algorithm is stochastic, each
		model is trained 20 times. 
\item \label{train:gridsearch} For each regression method, select the
	model---which corresponds to one value of the hyperparameters and one
		execution of the Adam algorithm---by evaluating validation criterion
		\eqref{eq:valOne} or \eqref{eq:valTwo}, depending on the category of the method.
	\item \label{train:noiseextract} 
		Define the noise training set $\trainSetNoise$ and test set
		$\testSetNoise$ according to \eqref{eq:trainSetNoise}
	with	
$\timeIndexSetTrainNoise=\timeIndexSetTestNoise=\timeIndexSet$,
$\paramtrainsetNoise$ drawn randomly from 
	$\parameterdomainTest$, and 
		$\paramtestsetNoise= \parameterdomainTest\setminus\paramtrainsetNoise$.
\item \label{train:noisetrain} Train stochastic noise models corresponding to
	a stationary Gaussian, stationary Laplacian, or AR1
		via \eqref{eq:trainStatGauss}, \eqref{eq:trainStatLaplace}, and \eqref{eq:trainAR1}, respectively.
\item Evaluate the trained noise models using statistical validation criteria
	(e.g., empirical prediction intervals, Komolgorov--Smirnov test) on the test
		set $\testSetNoise$.
\end{enumerate}
As discussed in Section \ref{sec:regressionTraining}, the validation procedure
described in steps \ref{train:train}--\ref{train:gridsearch} could be replaced with a
cross-validation process.  

\subsubsection{Hyperparameter grid and optimization}
We now describe the specific hyperparameter grid employed by each
considered regression method as defined in step \ref{step:defineGrid}.
\begin{itemize}
\item \textbf{$k$-nearest neighbors (kNN)}: The hyperparameters for kNN
	correspond to 
	the number of neighbors $k$, and the definition of the weights $w$. We
		employ the
		hyperparameter grid 
		$$
		(k,\text{weighting function}) \in \{1,\ldots,5\}\times\{\text{uniform,
		Euclidean distance}\}.
		$$
\item \textbf{Artificial neural network (ANN)}: The hyperparameters for ANN
	correspond to the network depth $\nlayers$ and the number of neurons per
		layer $p$, which we set to be constant such that
		$p_i=p$,
$i=1,\ldots,\nlayers$. We also consider the regularization parameter 
		 $\alpha$ that appears in 
		ridge regularization terms $\regTermf(\cdot)\equiv\alpha\|\cdot\|_2^2$ and
		$\regTermg(\cdot)\equiv\alpha\|\cdot\|_2^2$ 
		 as a hyperparameter. We employ the hyperparameter grid
		 $$
( \nlayers ,p , \alpha)\in\{1,2\}\times
\{10,25,50,100\}\times 		\{10^{-i}\}_{i=1}^5.
		 $$
		Listing~\ref{listing:NN} provides the 
		Python code for
the Keras implementation of this method.
\item \textbf{Autoregressive with exogenous inputs (ARX)}:  Because we
	consider only ARX(1,1) methods, the only hyperparameter for this technique
		is the regularization parameter
		$\alpha$; we employ the hyperparameter grid
$$
\alpha \in \{10^{-i}\}_{i=1}^5.
$$
		We construct ARX models using both the NRT and RT approaches (see Section
		\ref{sec:regressionTraining}). We 
refer to the resulting models as ARX (NRT) and ARX (RT). Listing~\ref{listing:ARX} provides the Python code for the Keras implementation of ARX (RT).
\item \textbf{Integrated artificial neural network (ANN-I)}: For ANN-I, we
	employ the same hyperparameter grid as for ANN.
		As with ARX, we construct two different ANN-I models: one with NRT and one with RT. We 
refer to the resulting models as ANN-I (NRT) and ANN-I (RT). Listing~\ref{listing:NNI} provides the Python code for the Keras implementation of ANN-I (RT).
\item \textbf{Latent autoregressive with exogenous inputs (LARX)}:
	The hyperparameter for LARX is the number of latent
		variables $\nlatentf$. We also consider the regularization term $\alpha$
		and employ the hyperparameter grid
		$$
( \nlatentf , \alpha )\in\{
	10,25,50,100\}\times\{10^{-i}\}_{i=1}^5.
		$$
		Listing~\ref{listing:LARX} provides the Python code for the Keras implementation of LARX.
\item \textbf{Recurrent Neural Network (RNN)}: For RNN, we employ the same
	hyperparameter grid as for ANN.  Listing~\ref{listing:RNN} reports the
	Python code used to develop the RNNs used in this work; we employ
	Keras' \verb+SimpleRNN+ cell. 
\item \textbf{Long short-term memory Network (LSTM)}: 
	For LSTM, we employ the same
	hyper-parameter grid as for ANN. 
	Listing~\ref{listing:LSTM} gives the Python code used to develop the LSTM
	networks in this work; we employ Keras' \verb+LSTM+ cell. 
\item \textbf{GP Regression}: GP regression is characterized by one hyperparameter: the
	noise magnitude $\gpNoise$. We employ the hyperparameter grid 
$$ \gpNoise \in \{ 10^{-8 +
		\frac{8(i-1)}{19}}\}_{i=1}^{20}.$$ 
\end{itemize}
Additionally, for feature methods employing the principal components of the
residual or residual samples, the number of principal components $\npca$ and
number of residual samples $\nsample$ can be viewed as hyperparameters. Here,
we employ $\npca=\nsample$, where $\npca$ is determined such that the retained
singular values associated with the truncated residual principal components
contain $99\%$ of the total statistical energy (see, e.g., 
Ref.~\cite[Appendix A]{carlberg_lspg_v_galerkin}). 

\subsubsection{Evaluation metrics}
To evaluate the performance of the deterministic regression-function models,
we employ the fraction of variance unexplained (FVU) computed on the test set
$\testSet$, which is given by
$$\text{FVU} \defeq 1 - r^2,$$
where the coefficient of determination is defined as 
$$r^2 \defeq 1 - \frac{ \sum_{ \tindx \in \timeIndexSetTest  , \parametervec \in \parameterdomainTest }
	(\errorApproxMeanGennmunArg{\tindx} - 
	 \errorGennmuArg{\tindx})^2 }
	{\sum_{ \tindx \in \timeIndexSetTest ,\parametervec \in \parameterdomainTest }\big( 
          \errorGennmuArg{\tindx} - \meanErrorGen \big)^2},$$
where the mean response is
$\meanErrorGen \defeq {1}/{\card{\testSet}} \sum_{ \tindx \in \timeIndexSetTest, \parametervec \in \parameterdomainTest
 } \errorGennmuArg{\tindx} .$

%
\subsection{Example 1: 
Advection--diffusion equation with
projection-based reduced-order model}
The first example considers 
the advection--diffusion equation and approximate solutions generated by a 
POD--Galerkin reduced-order model.
The governing PDE is 
\begin{equation}\label{eq:ad}
\frac{\partial }{\partial t}u(\spatialvar ,t) 
+\parameter_1 \frac{\partial
	}{\partial \spatialvar }u(\spatialvar ,t) 
	=   \parameter_2 \frac{\partial^2
	}{\partial \spatialvar ^2}u(\spatialvar ,t), \hspace{0.1 in} u(0,t) = u(2,t)
	= 0, \hspace{0.1 in} u(\spatialvar ,0) = \spatialvar (2-\spatialvar
	)\exp(2\spatialvar),
\end{equation}
on the domain $\spatialvar  \in \physDomain =  [0,2]$ with $t \in [0,T]$  and $T=0.3$ and $u:
\physDomain \times [0,T] \rightarrow \RR{}$. 
We employ a parameter domain of
$(\parameter_1,\parameter_2)\in\parameterdomain = [-2,-0.1]\times[0.1,1]$, where
the parameter $\parameter_1$ is the wave speed and the parameter
$\parameter_2$ is the
diffusion coefficient.

To derive a parameterized dynamical system of the form \eqref{eq:fom} for the
FOM, we apply a finite-difference spatial discretization to the governing
PDE \eqref{eq:ad}, which uses
central differencing for the diffusion term and upwind differencing for the
advection term. This discretization is obtained by partitioning the spatial
domain into 101 cells of equal width, yielding a FOM with
$\nfom=100$ degrees of freedom of the form 
$$\frac{d x_k}{d t} + \parameter_1 \frac{ x_{k+1} - x_k}{\Delta
\spatialvar } = \parameter_2
\frac{ x_{k+1} - 2 x_k + x_{k-1} }{\Delta \spatialvar ^2},
\quad k = 1,\ldots,\nfom,$$
where the state vector is
$\state(t,\parametervec)\equiv[x_1(t,\parametervec)\ \cdots\
x_\nstate(t,\parametervec)]^T$ and $x_k(t,\parametervec)$ is the finite-difference approximation to
$u(\spatialvar_k,t,\parametervec)$ with $\spatialvar_k\defeq 2k/(\nfom+1)$
the $k$th point in the
finite-difference discretization. 
The initial condition is characterized by
$\statevec_0(\parametervec)  \equiv \statevec_0 =  [
\spatialvar_1 (2-\spatialvar_1
	)\exp(2\spatialvar_1)\ \cdots\ 
\spatialvar_\nfom (2-\spatialvar_\nfom
	)\exp(2\spatialvar_\nfom)
	]^T$.

To define the FOM O$\Delta$E of the form \eqref{eq:fom_discretized}, we employ
a time step 
$\Delta t = 3\times 10^{-4}$ with the
implicit Crank--Nicolson scheme, which is characterized by multistep
coefficients $\alpha_0 = 1$, $\alpha_1 = -1$, $\beta_0=\beta_1 = 1/2$ in
\eqref{eq:FOMODeltaERes}, which yields $\ntimesteps = 10^3$ time instances. 

To construct approximate solutions, we employ projection-based reduced-order
models as described in Section \ref{sec:romsec}.  
For this purpose, we
construct the trial-basis matrix $\basismat\in\RRstar{N\times \dimrom}$ using
proper orthogonal decomposition (POD) by executing Algorithm \ref{alg:pod} in
Appendix \ref{appendix:pod} with inputs  $\parameterdomainROM =
\{0.1,1.05,2.0\} \times \{0.1,0.55,1.0\}$, $\nskip= 10$, reference state 
$\statevecIntercept(\param) = \statevec_0(\param)$,
and $\dimrom= 5$.
For the ROM, we employ Galerkin projection such that the ROM ODE corresponds
to \eqref{eq:ROM} with $\testmat = \basismat$.  We employ the same time
discretization for the ROM as was employed for the FOM, i.e., the
Crank--Nicolson scheme with time step $\Delta t = 3\times 10^{-4}$.

We model both the QoI error $\qoiErrorArg{\tindx}(\parametervec)$,
where the QoI corresponds to the solution at the midpoint of the domain such
that the associated functional is 
$
\qoiFunc(\statevec;t,\parametervec)\mapsto \kroneckerVecArg{51}^T\statevec
$ with $\kroneckerVecArg{i}$ the $i$th canonical unit vector,
and the normed state error $\stateErrorArg{\tindx}(\parametervec)$.

\subsubsection{Coarse time grid, training, validation, and test sets}
We now describe how we execute Steps
\ref{step:timeIndexSet}--\ref{step:trainValTest} and \ref{train:noiseextract} described in Section
\ref{sec:trainingModelSel}. For Step \ref{step:timeIndexSet},
we set $\timeIndexSet = \{20n\}_{n=1}^{50}$; the coarse grid is 
selected such that the error responses on the training set are well represented. 
For Step \ref{step:trainValTest}, we employ set sizes of
$\card{\parameterdomainTrain} = 40$, 
$\card{\parameterdomainVal}=10$, and
$\card{\parameterdomainTest} = 50$.
To assess the impact of the training set size on the performance of the
deterministic regression-function model,
we consider four smaller training and validation sets
with $(\card{\parameterdomainTrain},\card{\parameterdomainVal})
\in\{(8,2),(16,4), (24,6), (32,8)\}$, which are constructed by sampling the
original sets $\parameterdomainTrain$ and $\parameterdomainVal$.
In Step \ref{train:noiseextract}, we set
$\card{\paramtrainsetNoise}=20$.

\subsubsection{Regression results}

Figures~\ref{fig:ad_converge}--\ref{fig:ad_hist} summarize the performance of the 
considered error models.
First, Figure~\ref{fig:ad_converge} reports the dependence of the performance of each 
deterministic regression-function model (with its best performing
feature-engineering method) on the size of the training set
$\card{\parameterdomainTrain}$. This figure illustrates several trends. 
First, we observe that traditional time-series-modeling methods,
i.e., ARX and LARX, nearly always yield the worst performance.
Here, ARX yields the worst performance, likely due to the fact that it is a
low-capacity model and contains only a
single latent variable. 
LARX outperforms ARX; this is likely
due to its inclusion of additional latent variables.
We observe that the performance of ANN-I is slightly better than ARX or
LARX, but not as good as the (non-integrated) ANN. This indicates that, in this case, 
differencing the data provides no particular benefit. 
We also observe that
recursive-neural-network models, i.e., RNN and LSTM, yield the best performance nearly uniformly, and generate FVU values
at least one order of magnitude smaller than both traditional
time-series-modeling methods. This highlights
that there is substantial benefit to modeling nonlinear latent dynamics as is
done with RNN and LSTM models. Of these two, LSTM tends to yield best performance in most cases. 
 Finally, LSTM and RNN outperform the GP
 regression-function model. This is likely due to the fact that the time-local
 GP method employs only the parameters $\parametervec$ as features, which are low
 quality compared to the residual-based quantities employed by the
 \methodAcronym\ methods.

Figure~\ref{fig:ad_train_hist} shows a histogram that, for regression methods
in~\ref{cat:recursiveLatentPred}, compares the ratio of the fraction of
variance unexplained using RT for training to fraction of variance unexplained
using NRT for training. This figure shows that RT training generally
outperforms NRT training; this is likely due to the fact that RT training
properly accounts for the latent dynamics while training the model. For ARX,
regression functions trained through RT outperform their NRT counterparts
90.8\% of the time; for ANN-I the regression functions trained through RT
outperform their NRT counterparts 72.3\% of the time.   

Next, Table~\ref{tab:ad_summary} summarizes the percentage of cases that
a given deterministic regression-function model produces results with the lowest
FVU; each case is defined by one of the 13 feature-engineering methods
and one of the 5 
training-set sizes, yielding 65 total cases. We do not include the GP method in these results, as it 
employs only the parameters as 
features.
Results are presented for the cases where (1) all feature methods are
considered and (2) only feature methods that don't include time are
considered. The table shows that LSTM is the best
overall performing regression-function model, leading to the lowest FVUs
approximately 60\% of the time. RNN is the next best performing
regression-function model, followed by ANN. For the normed state error
$\stateError$, recursive regression methods comprise the best performing
regression-function models in $87.7\%$ of cases when all feature-engineering
methods are considered, and in $97.1 \%$ of cases when only
feature-engineering
engineering methods that omit time are considered; this shows that including
time can improve the relative performance of non-recursive regression methods.
For the QoI error $\qoiError$, recurrent architectures comprise the best
performing methods in over 97\% of cases whether or not time is considered as
a feature.

\begin{figure}
\begin{center}
\begin{subfigure}[t]{0.49\textwidth}
\includegraphics[width=1.\linewidth]{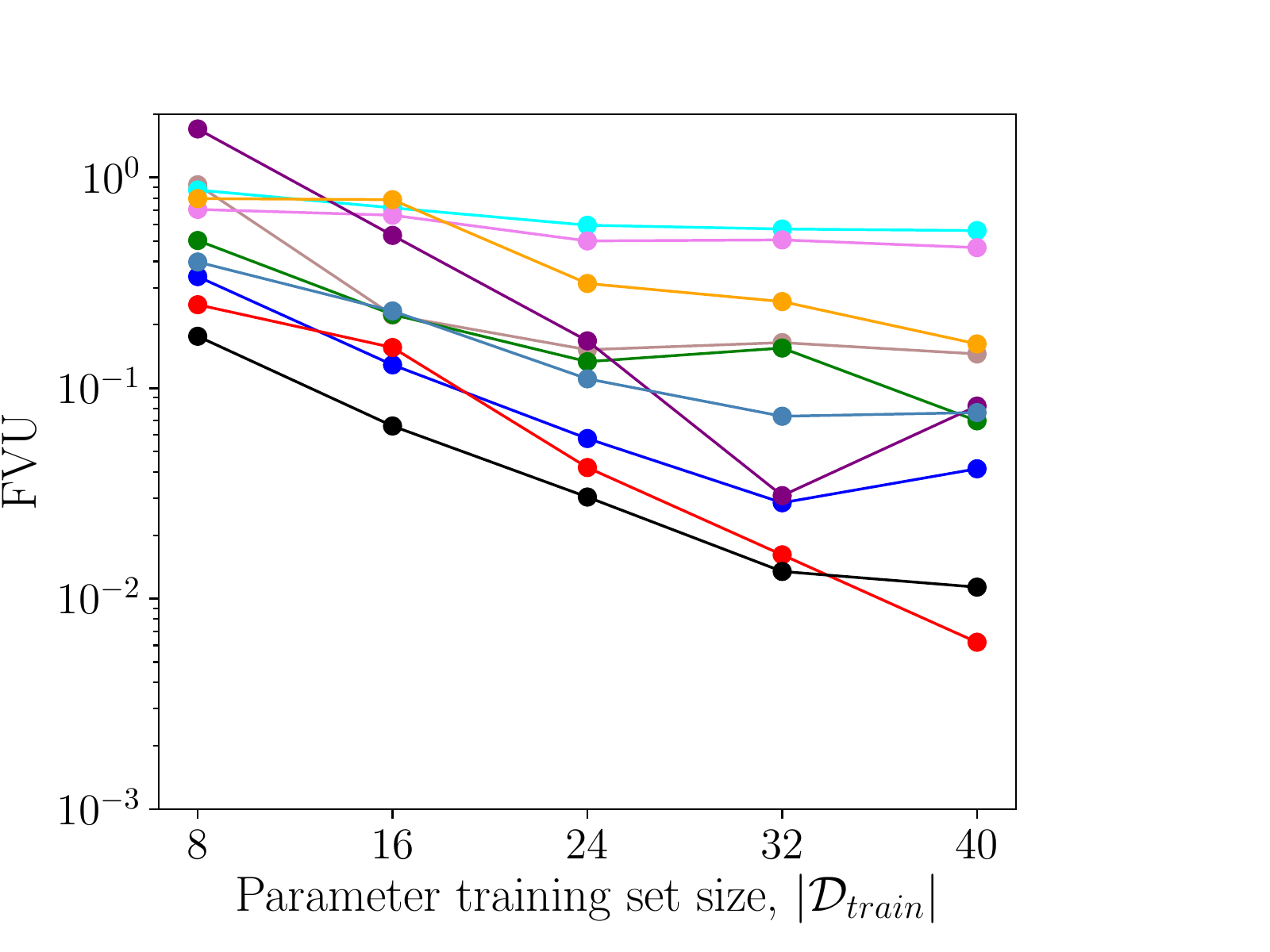}
\caption{Normed state error $\stateError$}
\end{subfigure}
\begin{subfigure}[t]{0.49\textwidth}
\includegraphics[width=1.\linewidth]{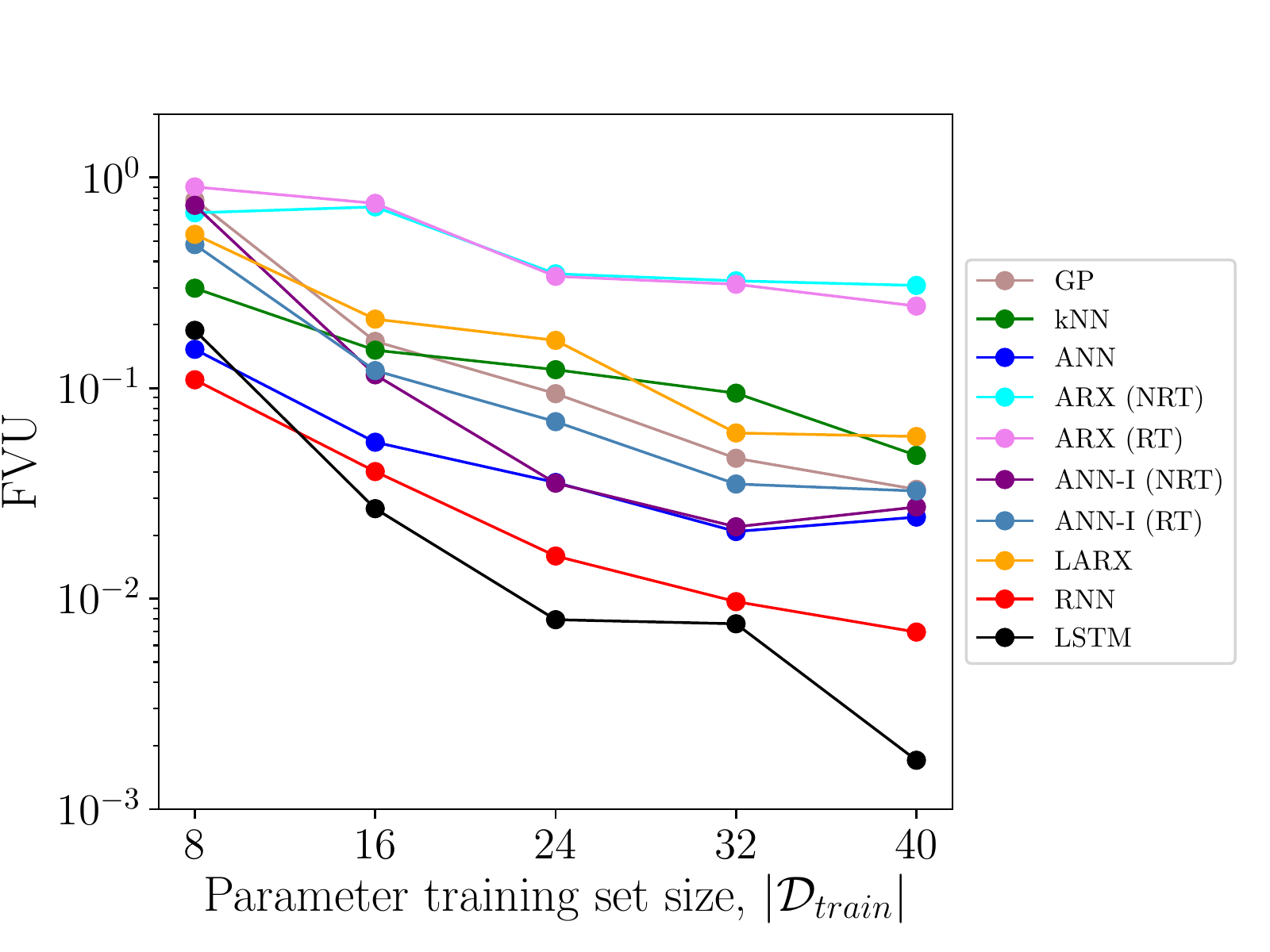}
\caption{QoI error $\qoiError$}
\end{subfigure}
\end{center}
	\caption{\textit{Advection--diffusion equation.} Fraction of variance
	unexplained for all considered deterministic regression-function models,
	with each employing their best respective feature-engineering method. }
\label{fig:ad_converge}
\end{figure}

\begin{figure}
\begin{center}
\begin{subfigure}[t]{0.49\textwidth}
\includegraphics[width=1.\linewidth]{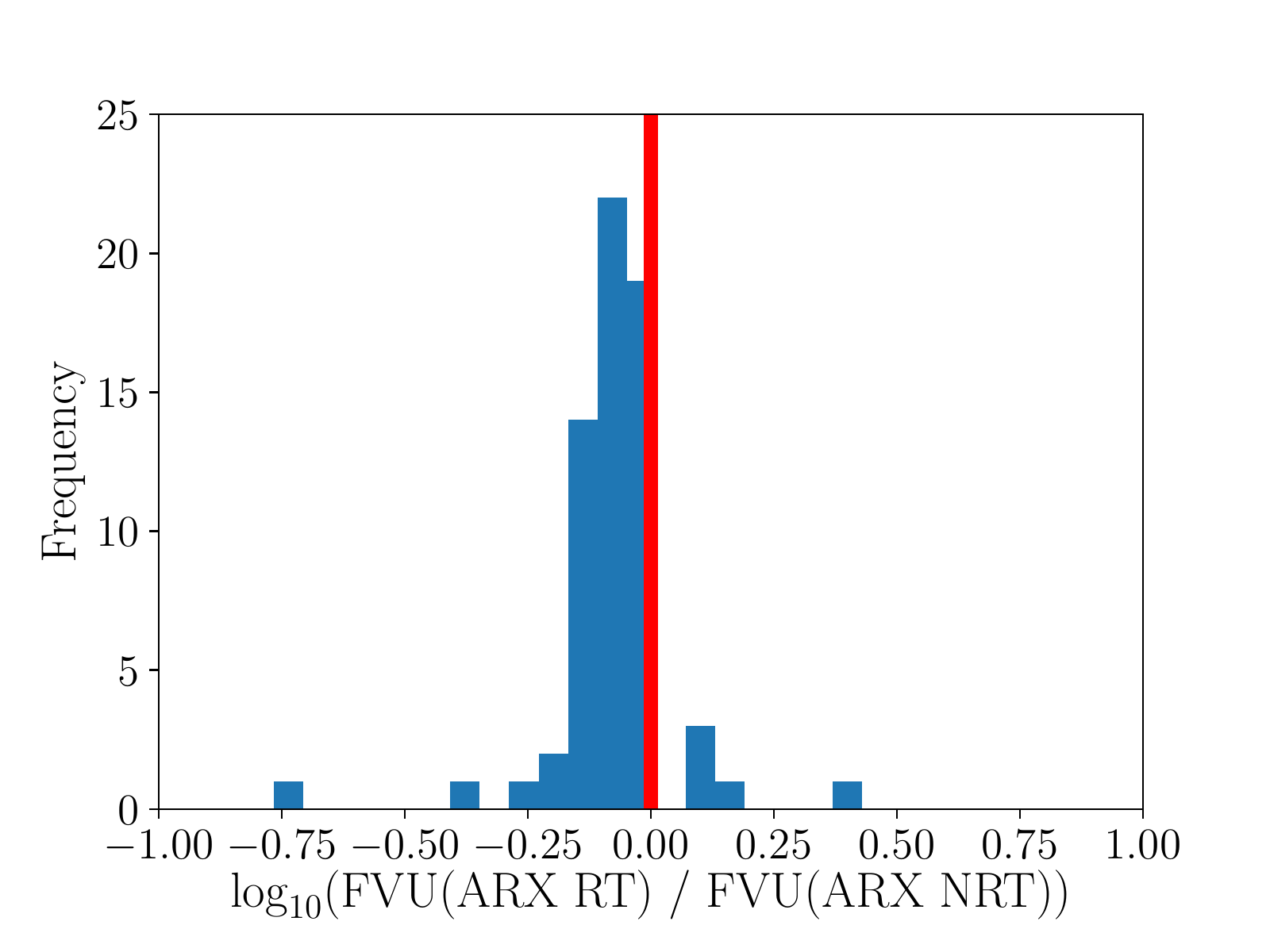}
\caption{Normed state error $\stateError$}
\end{subfigure}
\begin{subfigure}[t]{0.49\textwidth}
\includegraphics[width=1.\linewidth]{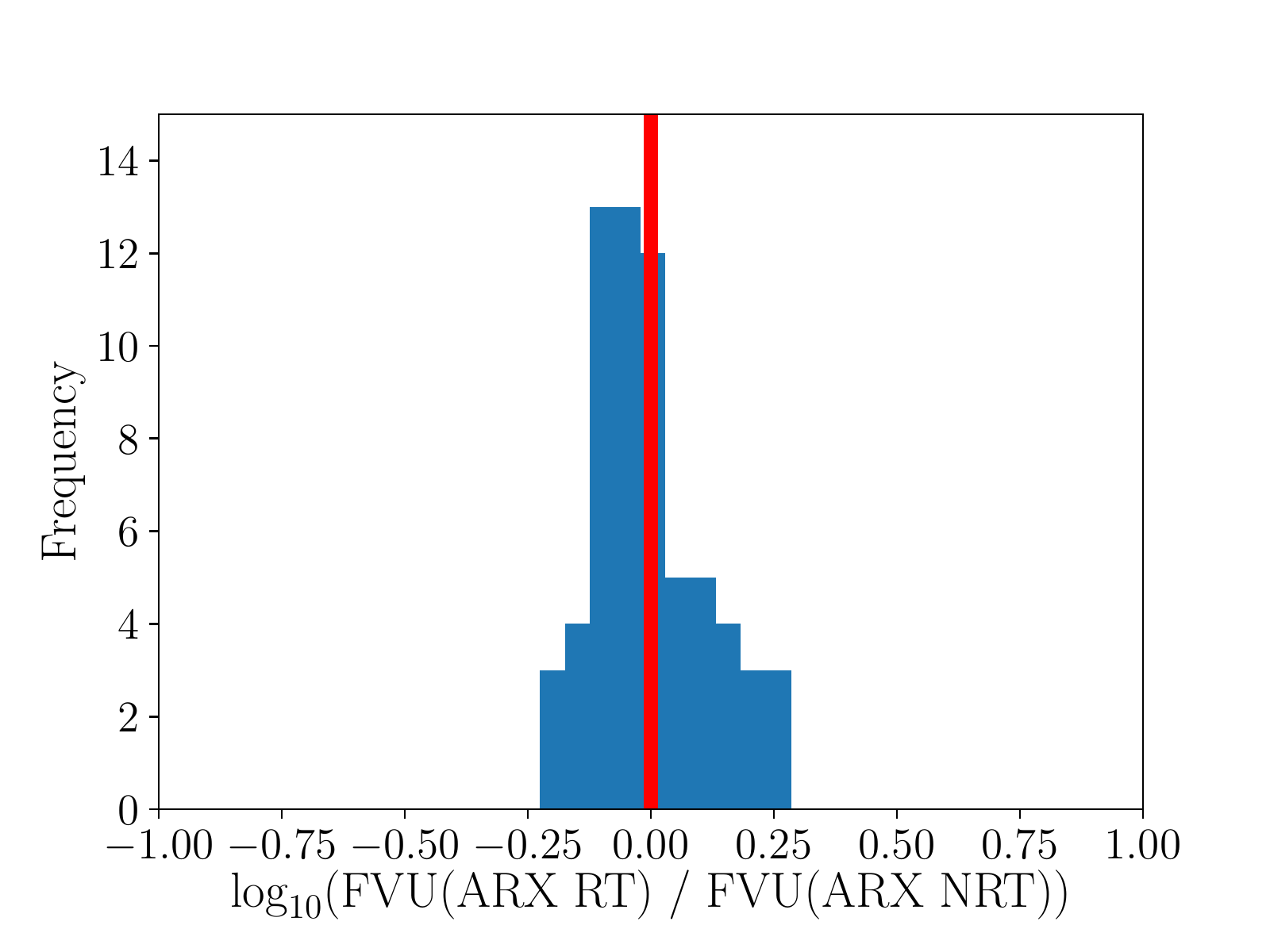}
\caption{QoI error $\qoiError$}
\end{subfigure}

\begin{subfigure}[t]{0.49\textwidth}
\includegraphics[width=1.\linewidth]{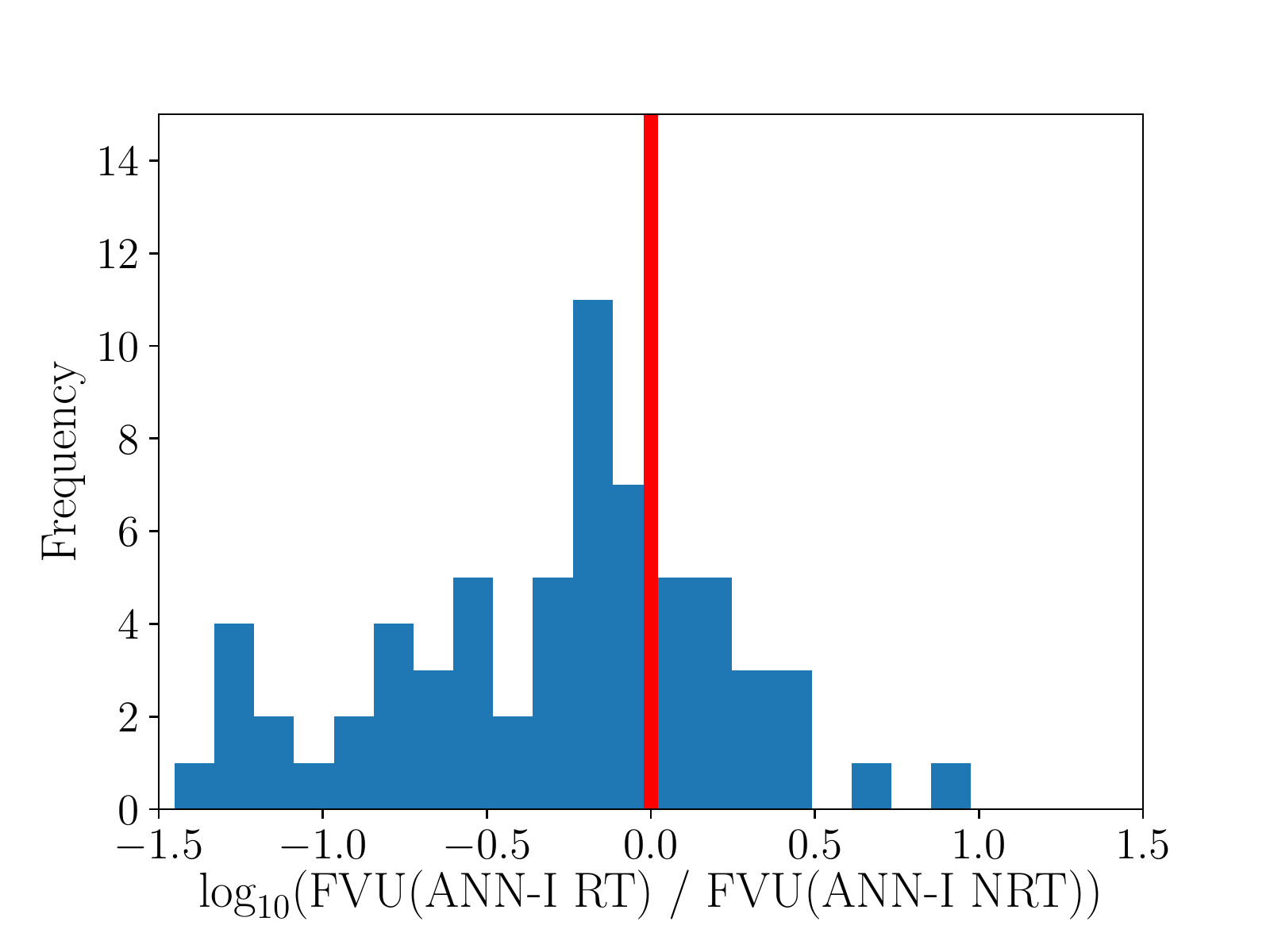}
\caption{Normed state error $\stateError$}
\end{subfigure}
\begin{subfigure}[t]{0.49\textwidth}
\includegraphics[width=1.\linewidth]{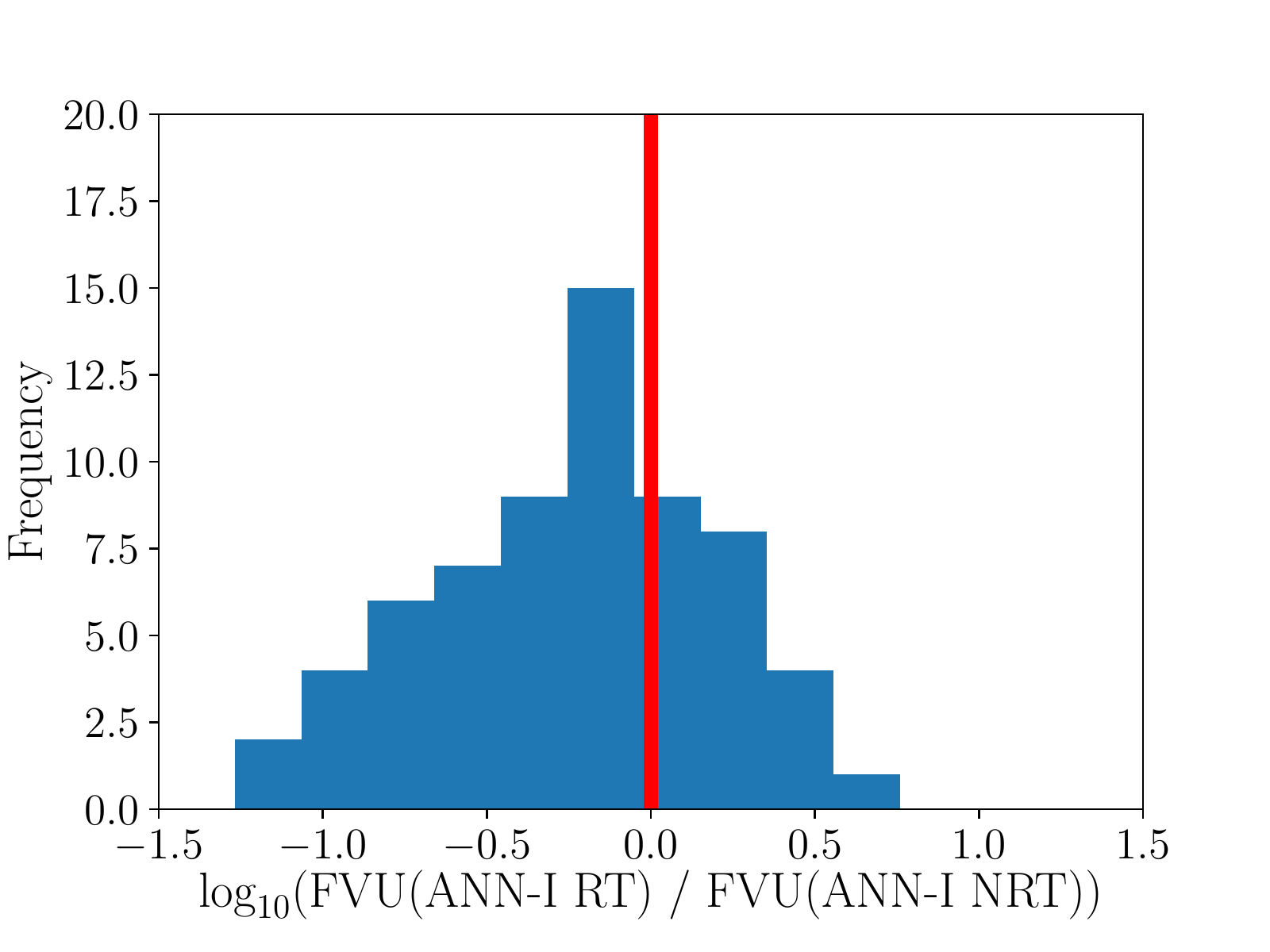}
\caption{QoI error $\qoiError$}
\end{subfigure}
\end{center}
	\caption{\textit{Advection--diffusion equation.} Histogram of the ratio of the fraction of variance unexplained using RT for training to 
fraction of variance unexplained using NRT for training. Results are shown for ARX (top) and ANN-I (bottom). For ARX, RT outperforms NRT 90.8\% of the time. For 
ANN-I, RT outperforms NRT 72.3\% of the time.}
\label{fig:ad_train_hist}
\end{figure}

\begin{table}[ht]
\begin{center}
\small{
\setlength{\tabcolsep}{4pt}
\begin{tabular}{|c| c | c c c c c c c c c | c|}
	\hline
error	&
\begin{tabular}{c }
	include feat.\\
	eng.\ w/ time?
\end{tabular}
	&kNN & ANN &  \begin{tabular}{c} ARX \\ (NRT) \end{tabular} & \begin{tabular}{c} ARX \\ (RT) \end{tabular} &  \begin{tabular}{c} ANN-I \\ (NRT) \end{tabular} & \begin{tabular}{c} ANN-I \\ (RT)\end{tabular} & LARX &  RNN & LSTM & 
	\begin{tabular}{c}	
	Recursive\\ total
	\end{tabular}\\ \hline
		$\stateError$ &Yes & 0.0 & 12.3 &0.0 & 0.0 & 1.5  & 0.0   & 0.0 & 27.7 & 58.5 & 87.7\\  
		$\stateError$ &No  & 0.0 & 2.9 &0.0 & 0.0 & 2.9 & 0.0 & 0.0   & 31.4 & 60.0  & 97.1\\ 
		$\qoiError$   &Yes & 0.0 & 1.5 &0.0 & 0.0 & 0.0 & 1.5  & 1.5  & 32.3 & 63.1  & 98.5 \\ 
		$\qoiError$   &No  & 0.0 & 2.9 &0.0 & 0.0 & 0.0 & 2.9 & 2.9  & 40.0& 51.4  & 97.1 \\ \hline
\end{tabular}
}
\end{center}
\caption{\textit{Advection--diffusion equation.} Percentage of cases having
	the lowest predicted FVU for each regression method. Results are summarized
	over all values $\card{\paramtrainindex} \in\{ 8,16,24,32,40\}$ and all
	feature-engineering methods. Recursive total reports the sum of all
	regression methods
	in \ref{cat:recursiveLatentPred} and 
	\ref{cat:recursive}.}
\label{tab:ad_summary}
\end{table}
 
Figure~\ref{fig:ad_grid} summarizes the performance of each combination of
regression method and feature-engineering method for the case
$\card{\parameterdomainTrain} = 40$. These results clearly show two critical
trends. First, as hypothesized, using residual-based features can improve
performance significantly; this is why they were considered high-quality
features.  This trend was also observed in Ref.~\cite{freno_carlberg_ml}.
Further, we note that---in this case---excellent results were obtained by
computing only $7(\ll\nfom)$ samples of the residual.  Second, as
hypothesized, the recurrent regression methods yield the best performance (even
for low-quality features), with LSTM performing best of all. In fact, LSTM and
RNN outperform the time-local GP by approximately one order of
magnitude. We also observe that including time slightly improves performance
for these methods. Finally, we observe that for the deterministic regression-function 
models in ~\ref{cat:recursiveLatentPred}, both training methods yield 
comparable results.

\begin{figure}
\begin{center}
\begin{subfigure}[t]{0.49\textwidth}
\includegraphics[width=1.\linewidth]{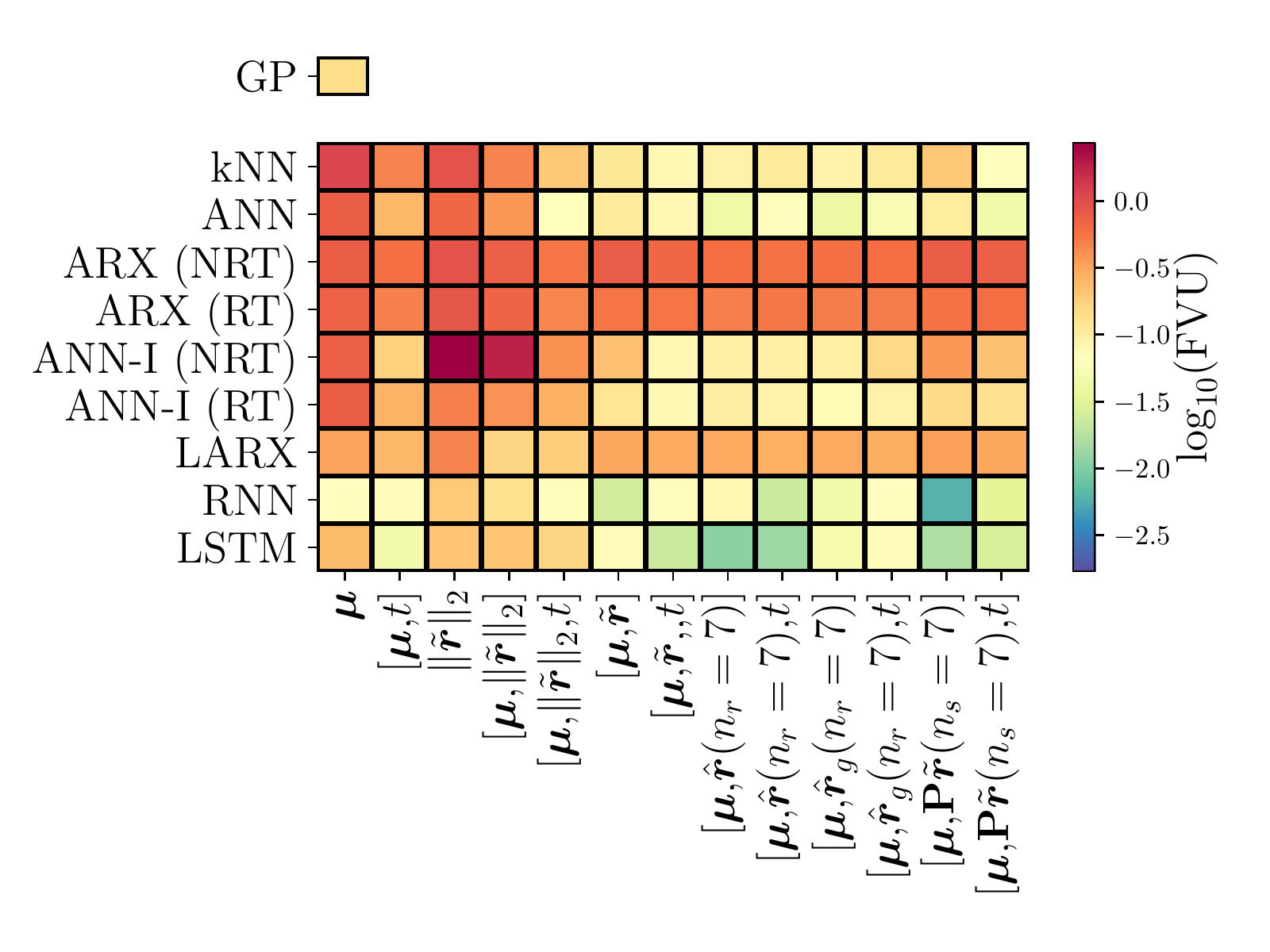}
\caption{Normed state error $\stateError$}
\end{subfigure}
\begin{subfigure}[t]{0.49\textwidth}
\includegraphics[width=1.\linewidth]{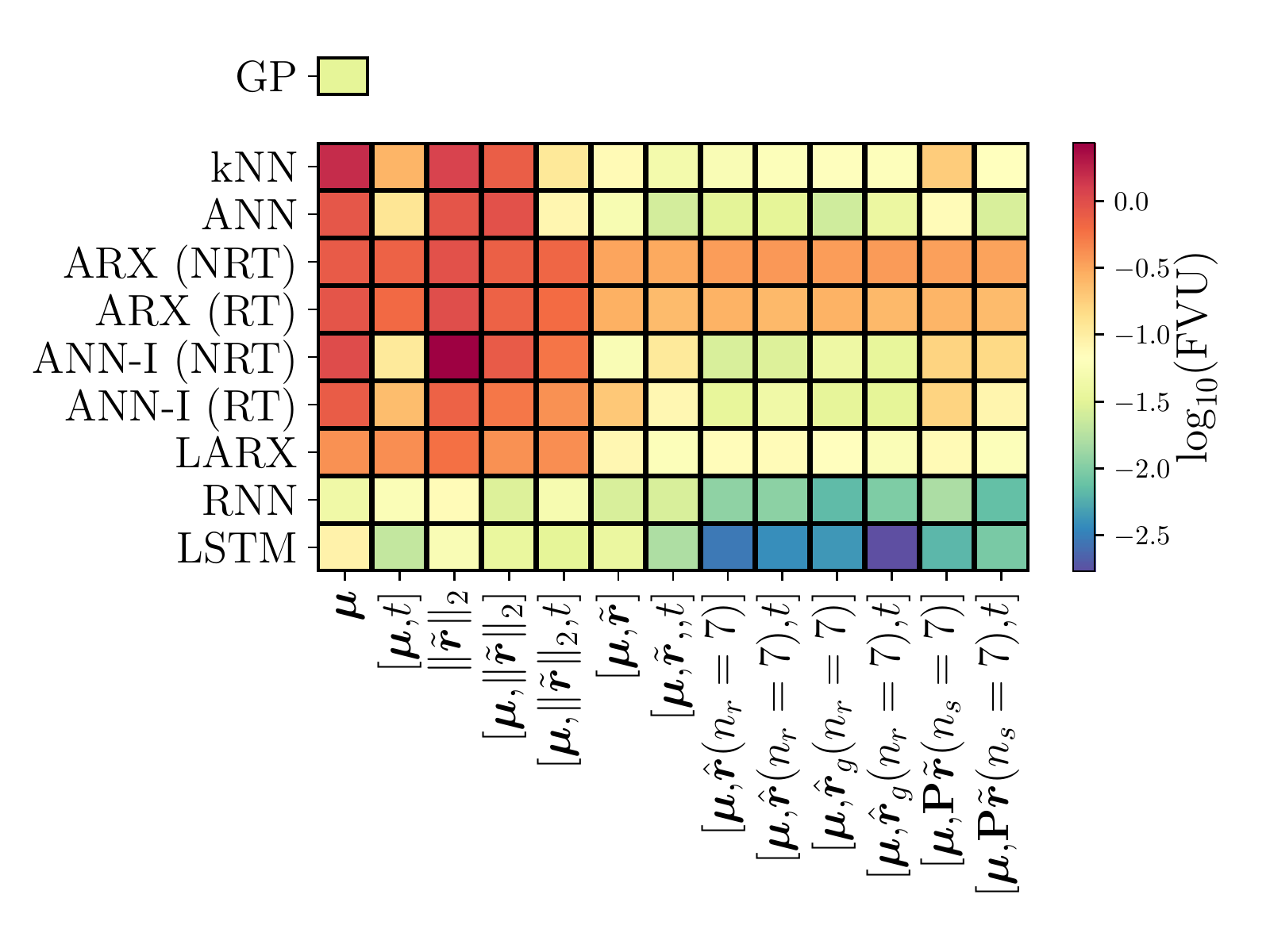}
\caption{QoI error $\qoiError$}
\end{subfigure}
\end{center}
\caption{\textit{Advection--diffusion equation.} Summary of the performance of
	each combination of regression method and feature-engineering method for the
	case $\card{\paramtrainindex} = 40$. }
\label{fig:ad_grid}
\end{figure}

 
Figure~\ref{fig:ad_yhat_vs_t} shows the predictions made by the different
considered regression methods (for their best performing respective
feature-engineering method) as a function of time for a randomly drawn element
of $\parameterdomainTest$ for cases $\card{\paramtrainset} = 8$ and
$\card{\paramtrainset} = 40$. First, we note that the case
$\card{\paramtrainset}=8$ yields significant errors for all
considered models, with LSTM yielding the best performance.
For the case
$\card{\paramtrainset}=40$, all regression methods display substantially improved 
performance; LSTM, RNN, and ANN yield particularly accurate results. 


\begin{figure}
\begin{center}
\begin{subfigure}[t]{0.49\textwidth}
\includegraphics[width=1.\linewidth]{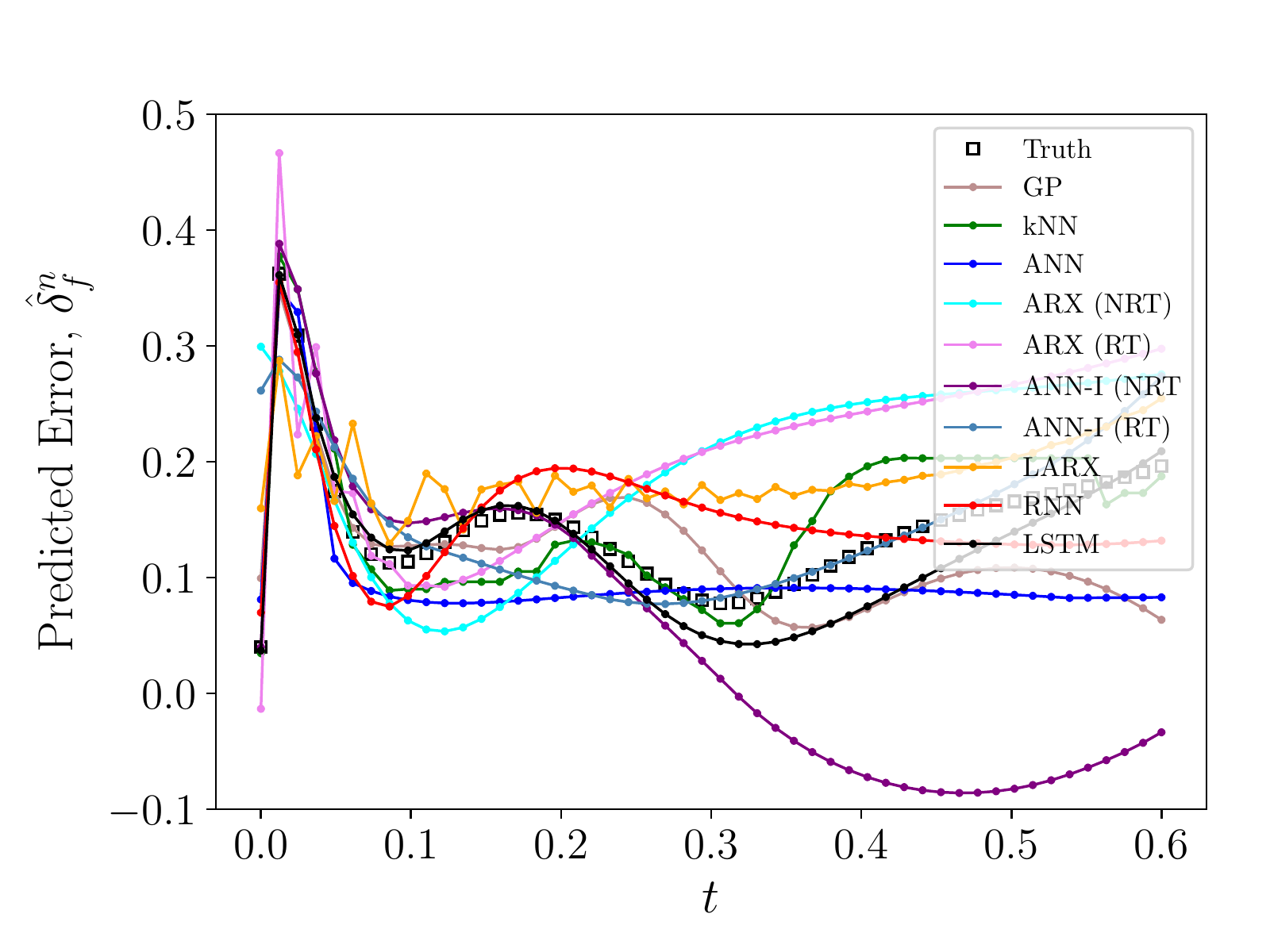}
	\caption{Normed state error $\stateError$, $\card{\paramtrainindex}=8$}
\end{subfigure}
\begin{subfigure}[t]{0.49\textwidth}
\includegraphics[width=1.\linewidth]{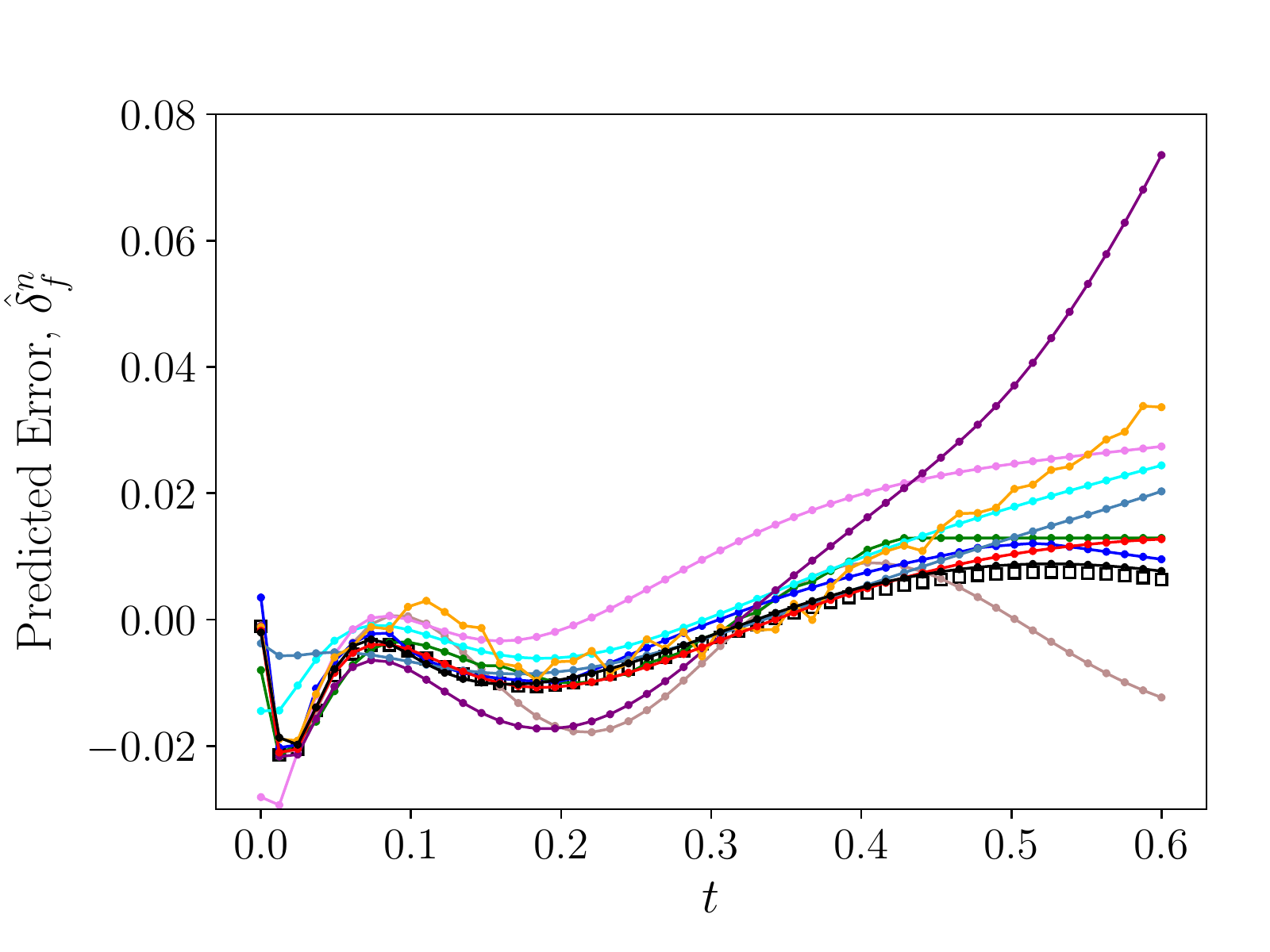}
	\caption{QoI error $\qoiError$, $\card{\paramtrainindex}=8$}
\end{subfigure}
\begin{subfigure}[t]{0.49\textwidth}
\includegraphics[width=1.\linewidth]{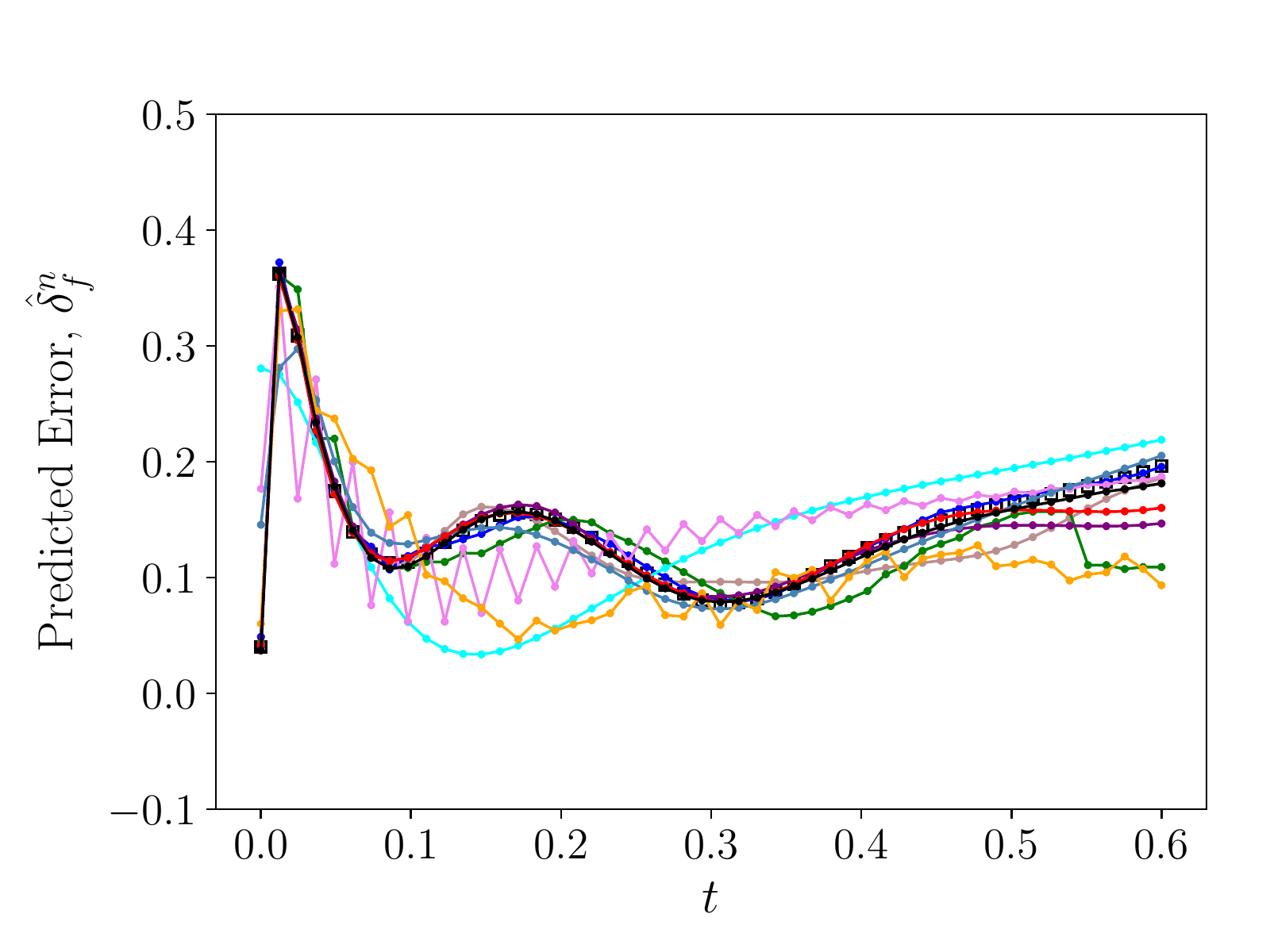}
	\caption{Normed state error $\stateError$, $\card{\paramtrainindex}=40$}
\end{subfigure}
\begin{subfigure}[t]{0.49\textwidth}
\includegraphics[width=1.\linewidth]{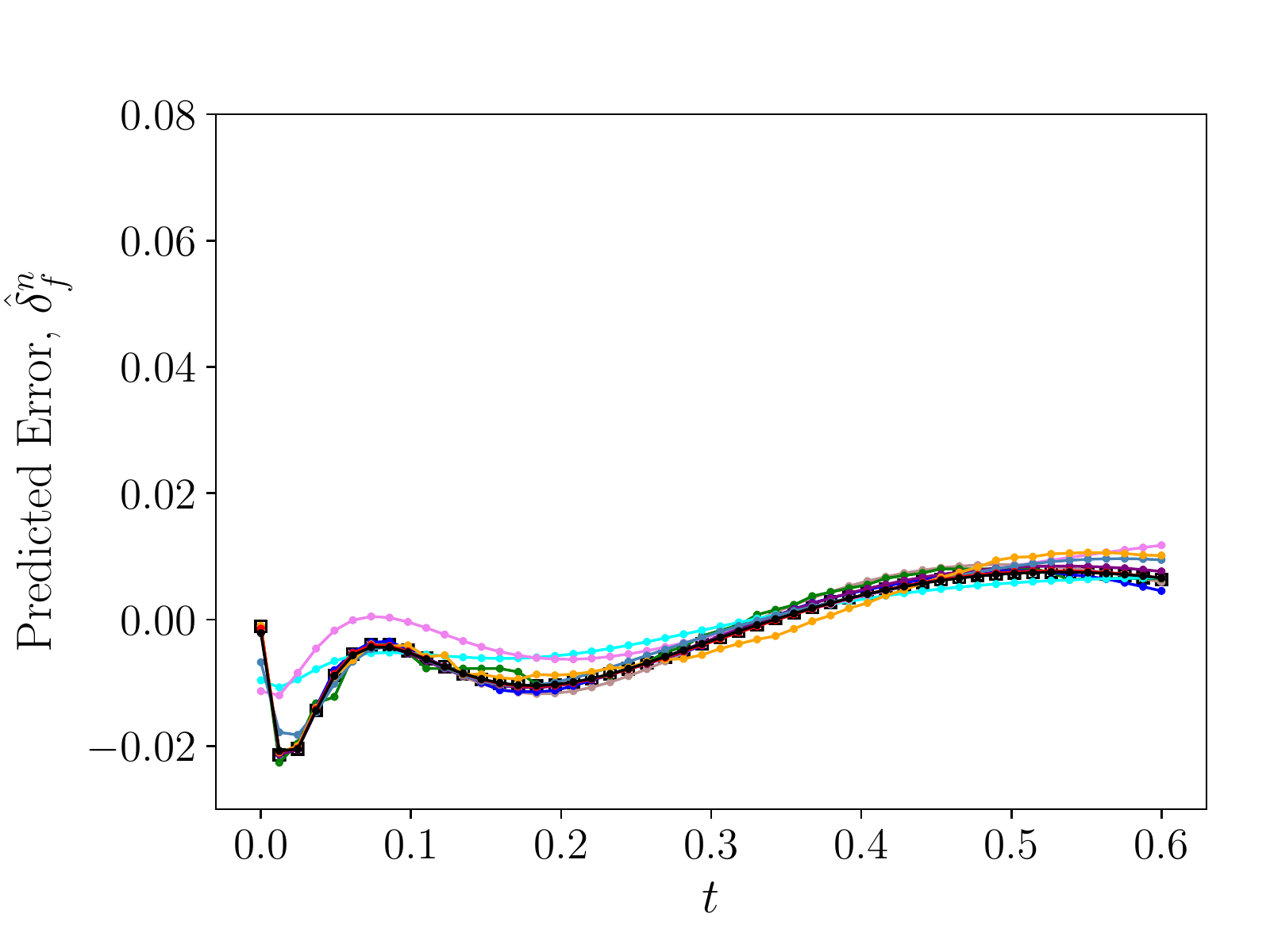}
	\caption{QoI error $\qoiError$, $\card{\paramtrainindex}=40$}
\end{subfigure}
\end{center}
\caption{\textit{Advection--diffusion equation.} Error response $\stateError$ (left) and $\qoiError$ (right)
	predicted by each regression method as a function of time. Results are shown
	for the best performing feature engineering method on the
	$\card{\paramtrainindex} = 8$ case (top) and the $\card{\paramtrainindex} = 40$ case (bottom).
	}
\label{fig:ad_yhat_vs_t}
\end{figure}

Figure~\ref{fig:ad_hist} shows the regression errors (i.e., the response error
of the deterministic regression-function models) for the best performing
regression method (i.e., LSTM) employing its best performing
feature-engineering method. The figure compares the regression-error
distribution with the distributions predicted by the Gaussian, Laplacian, and
AR1 stochastic noise models. Table~\ref{tab:ad_hist} reports validation
frequencies of the approximated distributions, where $\omega(C)$ corresponds
to the fraction of test data that lie within the $C$-prediction interval
generated by the stochastic noise model. We also report the Kolmogorov--Smirnov (K-S)
statistic. We observe that the Laplacian noise model yields the most
accurate prediction-interval frequencies, with AR1 yielding
similar performance to that of the Laplacian model.
\begin{figure}
\begin{center}
\begin{subfigure}[t]{0.49\textwidth}
\includegraphics[width=1.\linewidth]{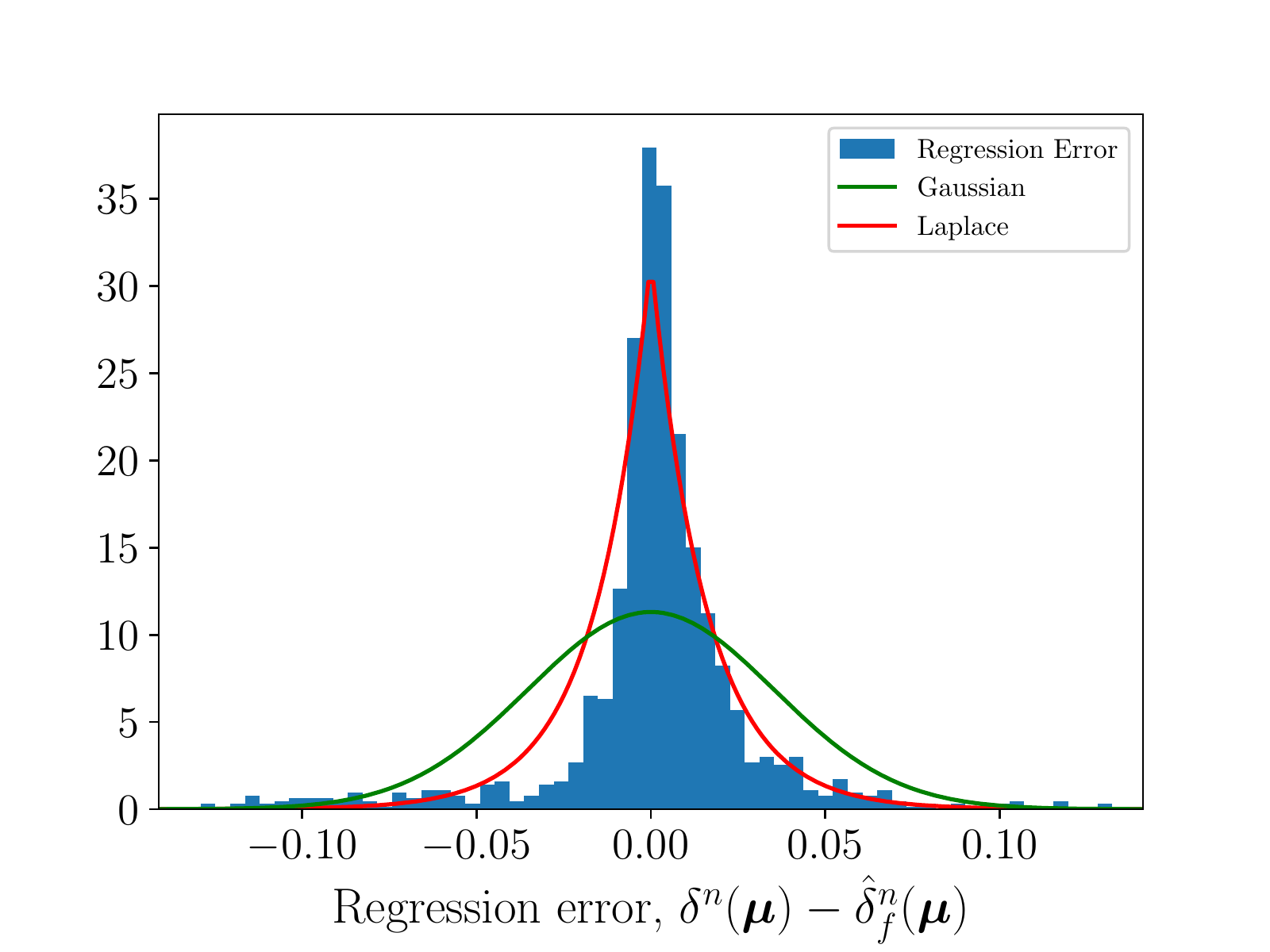}
\caption{Normed State Error}
\end{subfigure}
\begin{subfigure}[t]{0.49\textwidth}
\includegraphics[width=1.\linewidth]{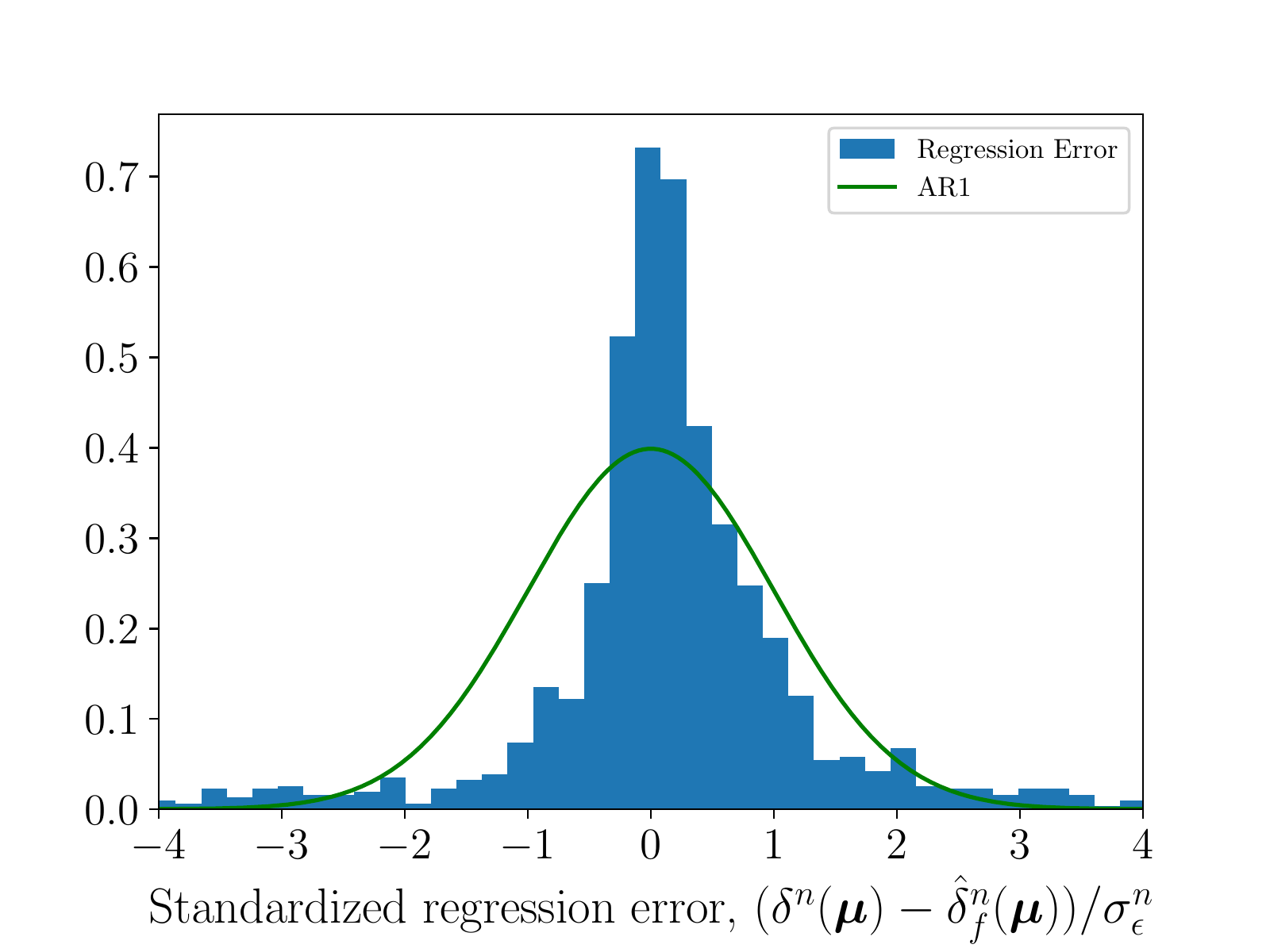}
\caption{Standardized Normed State Error}
\end{subfigure}

\begin{subfigure}[t]{0.49\textwidth}
\includegraphics[width=1.\linewidth]{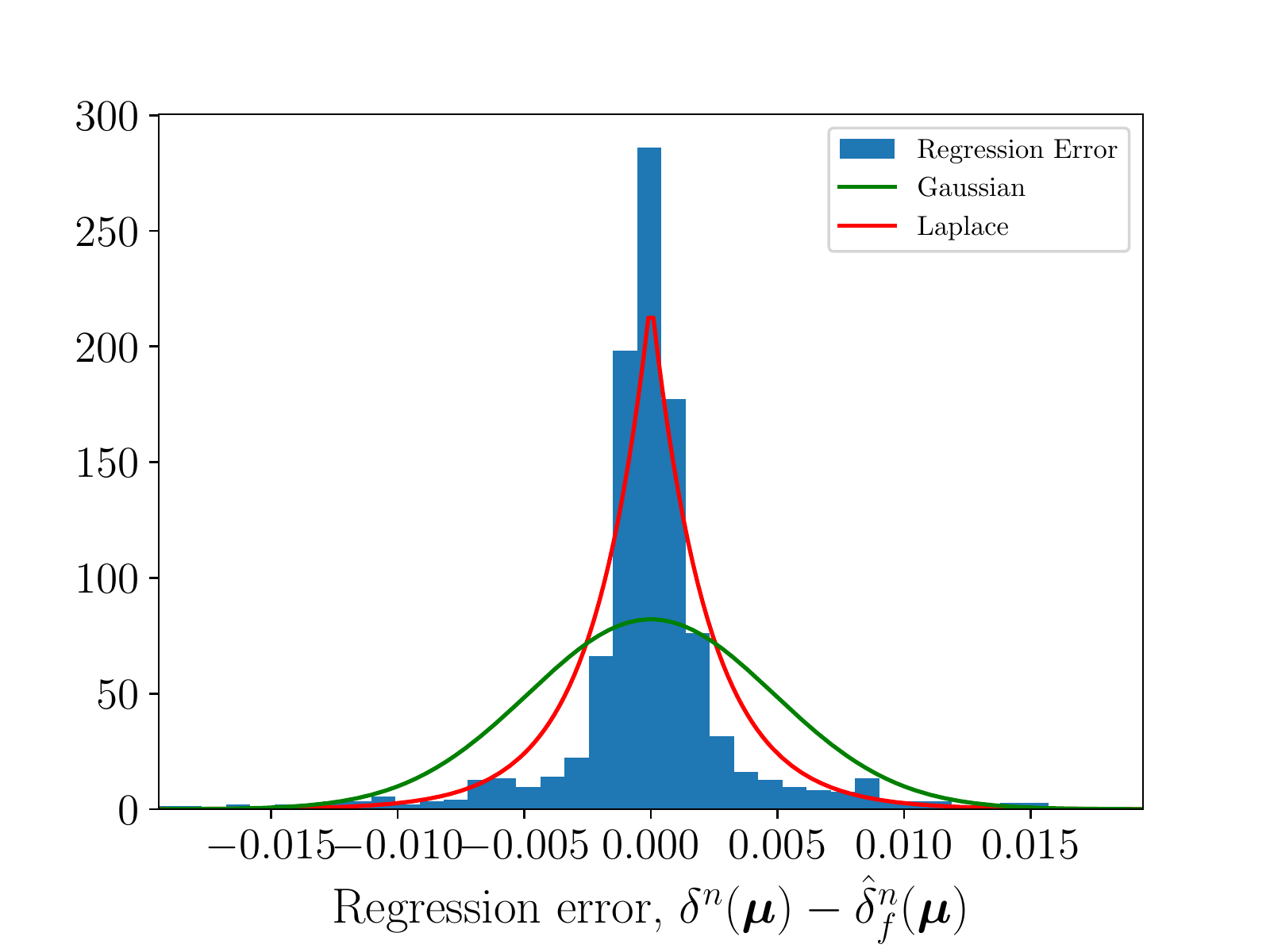}
\caption{QoI Error}
\end{subfigure}
\begin{subfigure}[t]{0.49\textwidth}
\includegraphics[width=1.\linewidth]{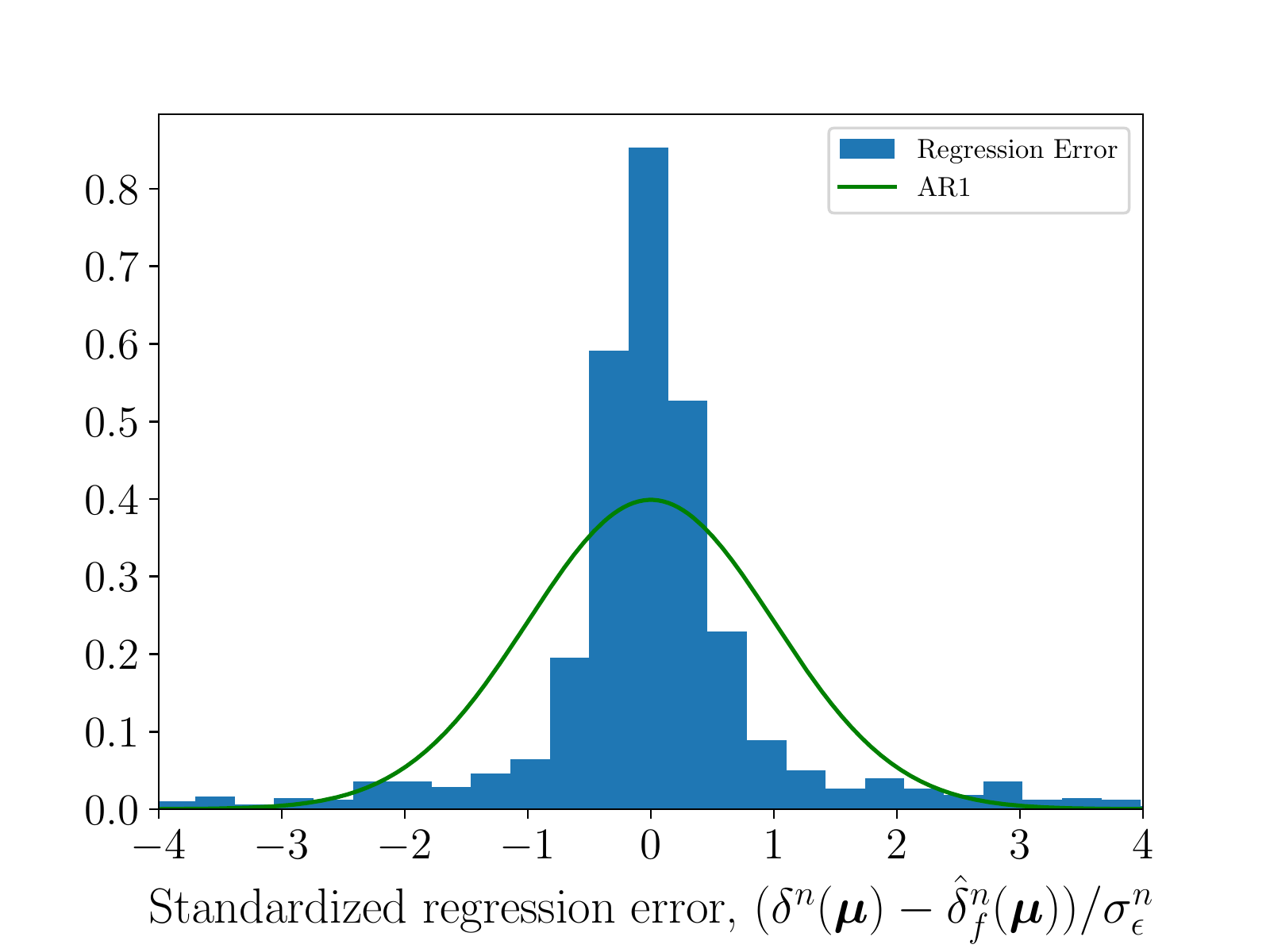}
\caption{Standardized QoI Error}
\end{subfigure}

\end{center}

\caption{\textit{Advection--diffusion equation.} Histogram of regression errors on the noise test set, $\testSetNoise$, for the normed state error (top) and QoI error (bottom). On the left, we show absolute errors $(\errorGennmuArg{n} - \errorApproxMeanGennmunArg{n})$, $\parametervec \in \paramtestsetNoise$, $n \in \timeIndexSet$. On the right, we show standardized errors, $(\errorGennmuArg{n} - \errorApproxMeanGennmunArg{n})/\sigma_{\epsilon}^n$, $\parametervec \in \paramtestsetNoise$, $n \in \timeIndexSet$. The true regression errors are compared to the distributions predicted by the Gaussian, Laplacian, and AR1 error models. Regression errors are shown 
for the LSTM deterministic regression function model with the best performing feature-engineering method on the $\card{\paramtrainindex} = 40$ case.}
\label{fig:ad_hist}
\end{figure}

In summary, for the advection--diffusion example, the LSTM model yielded best
performance, followed by RNN and ANN. We observed the residual-based features
to yield best performance as well. Finally, the Laplacian model
yielded the most accurate stochastic noise model.
\begin{table}[ht]
\begin{center}
\begin{tabular}{c |c c| c c |c c}
    \hline
    & \multicolumn{2}{c|}{Gaussian}  & \multicolumn{2}{c|}{Laplacian} &
		\multicolumn{2}{c}{AR1} \\
    Prediction interval &  $\stateError$ & $\qoiError$ &  $\stateError$ & $\qoiError$ &  $\stateError$ & $\qoiError$   \\ \hline
 $\omega( 0.68 )$ &  0.842 & 0.805  & 0.724 & 0.673 & 0.714 & 0.725\\
 $\omega( 0.95 )$ & 0.924 & 0.932 & 0.883 & 0.913  & 0.834 & 0.880 \\
 $\omega( 0.99 )$ & 0.943 & 0.967 & 0.868 & 0.967 & 0.868 & 0.913 \\
 \hline

 K-S Statistic & 0.216 & 0.216 & 0.114 & 0.088 & 0.146 & 0.131\\ \hline
\end{tabular}
\end{center}
\caption{\textit{Advection--diffusion equation.} Prediction
	intervals for the various stochastic noise models.
	Results correspond to those generated by the
	LSTM 
	regression function model with its  best performing feature-engineering method on the
	$\card{\paramtrainindex} = 40$ case, with samples collected over $\testSetNoise$.
	}
\label{tab:ad_hist}
\end{table}

\subsection{Example 2: 
Shallow-water equations with
projection-based reduced-order model}
The second example considers the shallow water equations and approximate
solutions generated by a POD--Galerkin reduced-order model.
The governing system of PDEs is 
\begin{align*}
&\frac{\partial h}{\partial t} + \frac{\partial}{\partial \spatialvar }(  h u) + \frac{\partial}{\partial \spatialvarTwo }( h v) = 0\\
&\frac{\partial h u}{\partial t} + \frac{\partial}{\partial \spatialvar } (h u^2 + \frac{1}{2} \parameter_1 h^2) + \frac{\partial}{\partial \spatialvarTwo }( h u v) = 0\\
&\frac{\partial h v}{\partial t} + \frac{\partial}{\partial \spatialvar } (h u v) + \frac{\partial}{\partial \spatialvarTwo }( h v^2 +  \frac{1}{2} \parameter_1 h^2) = 0
\end{align*}
on the domain $(\spatialvar ,\spatialvarTwo ) \in \physDomain = [0,5] \times [0,5]$, $t \in [0,T]$
with $T=10$, and 
where $h,u,v: \physDomain \times [0,T] \rightarrow \RR{}$, with $h$ denoting the height
of the water surface, $u$ denoting the $\spatialvar$-velocity, and $v$
denoting the $\spatialvarTwo$-velocity.
The parameter $\parameter_1$ is the ``gravity'' parameter. 

The parameterized initial conditions are given by $$h(\spatialvar
,\spatialvarTwo ,0) = 1 + \parameter_2 e^{ (\spatialvar  - 1)^2 +
(\spatialvarTwo  - 1)^2 }, \quad u(\spatialvar ,\spatialvarTwo ,0) =
v(\spatialvar ,\spatialvarTwo ,0) = 0,\quad (\spatialvar ,\spatialvarTwo ) \in
\physDomain,$$ which corresponds to a Gaussian pulse centered at $(1,1)$ with
a magnitude of $\parameter_2$. We consider a parameter domain
$(\parameter_1,\parameter_2)\in\parameterdomain = [2,9]\times [0.05,2]$.

To derive a parameterized dynamical system of the form \eqref{eq:fom} for the
FOM, we apply a third-order 
discontinuous-Galerkin spatial discretization, which partitions the domain
into a 
$16 \times 16$
grid of uniform square elements and employs tensor-product polynomials of order $2$
to represent the solution over each element, yielding a FOM of dimension
$\nfom = 1.23\times 10^4$. We employ slip-wall boundary conditions at the edge
of the domain.

To define the FOM O$\Delta$E of the form \eqref{eq:fom_discretized}, we employ
a time step 
$\Delta t = 1.25\times 10^{-2}$ with the
implicit Crank--Nicolson scheme, which is characterized by multistep
coefficients $\alpha_0 = 1$, $\alpha_1 = -1$, $\beta_0=\beta_1 = 1/2$ in
\eqref{eq:FOMODeltaERes}, which yields $\ntimesteps = 800$ time instances.
Figure~\ref{fig:swe_surf} shows elevation surfaces obtained from the solution
to the FOM O$\Delta$E at one randomly chosen parameter instance.

As in the first example,
we employ projection-based reduced-order
models to construct approximate solutions.
We again construct a POD 
trial-basis matrix $\basismat\in\RRstar{N\times \dimrom}$ 
by executing Algorithm \ref{alg:pod} in
Appendix \ref{appendix:pod} with inputs  $\parameterdomainROM =
\{3,6,9\} \times  \{
	0.05,0.1,0.2 \}$, $\nskip= 1$, reference state 
$\statevecIntercept(\param) = \boldsymbol 0$, and $\dimrom = 78$.
For the ROM, we employ Galerkin projection such that the ROM ODE corresponds
to \eqref{eq:ROM} with $\testmat = \basismat$. 
We employ the same time
discretization for the ROM as was employed for the FOM.


We model both the QoI error $\qoiErrorArg{\tindx}(\parametervec)$,
where the QoI corresponds to the 
water-surface height in the middle of the
domain,
and the normed state error $\stateErrorArg{\tindx}(\parametervec)$.


\begin{figure}
\begin{center}
\begin{subfigure}[t]{0.31\textwidth}
\includegraphics[width=1.\linewidth]{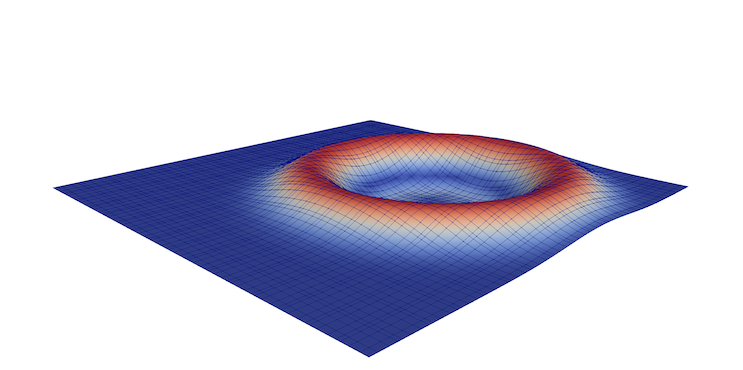}
\caption{$t=1$}
\end{subfigure}
\begin{subfigure}[t]{0.31\textwidth}
\includegraphics[width=1.\linewidth]{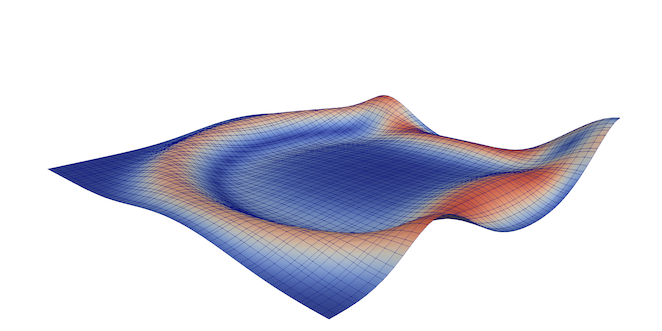}
\caption{$t=2$}
\end{subfigure}
\begin{subfigure}[t]{0.31\textwidth}
\includegraphics[width=1.\linewidth]{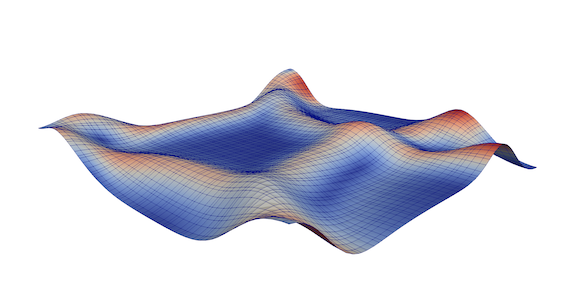}
\caption{$t=3$}
\end{subfigure}
\end{center}
	\caption{\textit{Shallow-water equations.} Elevation surfaces of $h$ for a
	randomly chosen parameter instance. Solutions are colored by velocity magnitude.}
\label{fig:swe_surf}
\end{figure}

\subsubsection{Coarse time grid, training, validation, and test sets}
We now describe how we execute Steps
\ref{step:timeIndexSet}--\ref{step:trainValTest} and \ref{train:noiseextract}
introduced Section
\ref{sec:trainingModelSel}. For Step \ref{step:timeIndexSet},
we set 
$\timeIndexSet =
\{4n\}_{n=1}^{200}$; this coarse grid is again 
selected such that the error responses on the training set are well represented. 
For Step \ref{step:trainValTest}, we employ set sizes of
$\card{\parameterdomainTrain} = 40$, 
$\card{\parameterdomainVal}=10$, and
$\card{\parameterdomainTest} = 20$.
Again, to assess the impact of the training set size on the performance of the
deterministic regression-function model,
we consider four smaller training and validation sets
with $(\card{\parameterdomainTrain},\card{\parameterdomainVal})
\in\{(8,2),(16,4), (24,6), (32,8)\}$, which are constructed by sampling the
original sets $\parameterdomainTrain$ and $\parameterdomainVal$.
In Step \ref{train:noiseextract}, we set
$\card{\paramtrainsetNoise}=10$.

Lastly, we note that we do not consider any feature-engineering methods that
employ \ref{feat:r}, as the dimension of the residual $\nfom = 1.23\times
10^4$ is prohibitively large in this example.


\subsubsection{Regression results}
Figures~\ref{fig:swe_converge}--\ref{fig:swe_hist} summarize the
performance of the deterministic regression-function models and stochastic
noise models.
First, Figure~\ref{fig:swe_converge} reports the dependence of the performance of each 
deterministic regression-function model (with its best performing
feature-engineering method) on the size of the training set
$\card{\parameterdomainTrain}$.
Figures~\ref{fig:swe_converge_a}--\ref{fig:swe_converge_b} considers all
feature-engineering methods, while Figures~\ref{fig:swe_converge_c}--\ref{fig:swe_converge_d} limit consideration to feature-engineering
methods that omit time from the set of features. When time is included as a
feature, LSTM and RNN yield the best overall performance, followed
closely by ANN; ANN slightly outperforms RNN in predicting the normed state 
error when time is included as a feature. ARX and LARX do not perform well for predicting the normed
state error $\stateError$, but do perform very well in predicting the QoI error
$\qoiError$. We additionally observe that both methods for training ARX yield similar trends. ARX (RT) 
outperforms ARX (NRT) in predicting the normed state error, and ARX (NRT) outperforms ARX (RT) in predicting the QoI error. 
Next, we again observe that 
ANN-I performs worse than the standard ANN. We also observe that, in general, ANN-I (RT) performs slightly better than ANN-I (NRT) and yields similar results to ANN. 
Next, in comparing Figures~\ref{fig:swe_converge_a} and~\ref{fig:swe_converge_b} to 
Figures~\ref{fig:swe_converge_c} and~\ref{fig:swe_converge_d}, it is observed that 
the non-recursive regression methods are more reliant upon time to yield 
good performance than the recursive regression methods. This
indicates that the non-recursive methods  will likely not
	generalize well for problems characterized by phase shifts or for
	prediction beyond the training time interval.
Lastly, the
time-local GP benchmark is the worst performing method, and is particularly
poor at predicting the QoI errors. This is thought to be due to the fact 
that the parameters are low quality features. 

\begin{figure}
\begin{center}
\begin{subfigure}[t]{0.49\textwidth}
\includegraphics[width=1.\linewidth]{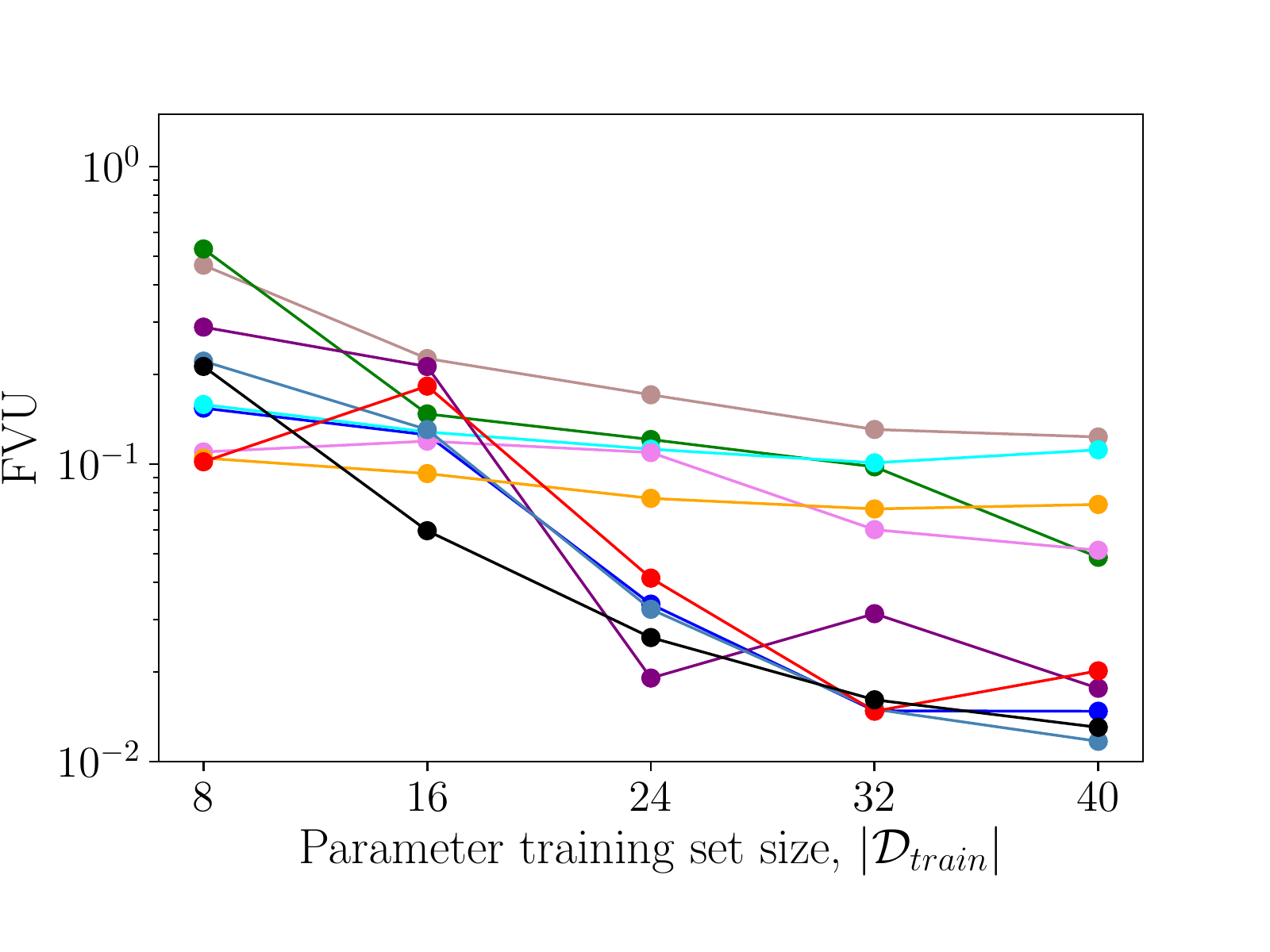}
\caption{Normed state error $\stateError$ (All feature-engineering methods)}
\label{fig:swe_converge_a}
\end{subfigure}
\begin{subfigure}[t]{0.49\textwidth}
\includegraphics[width=1.\linewidth]{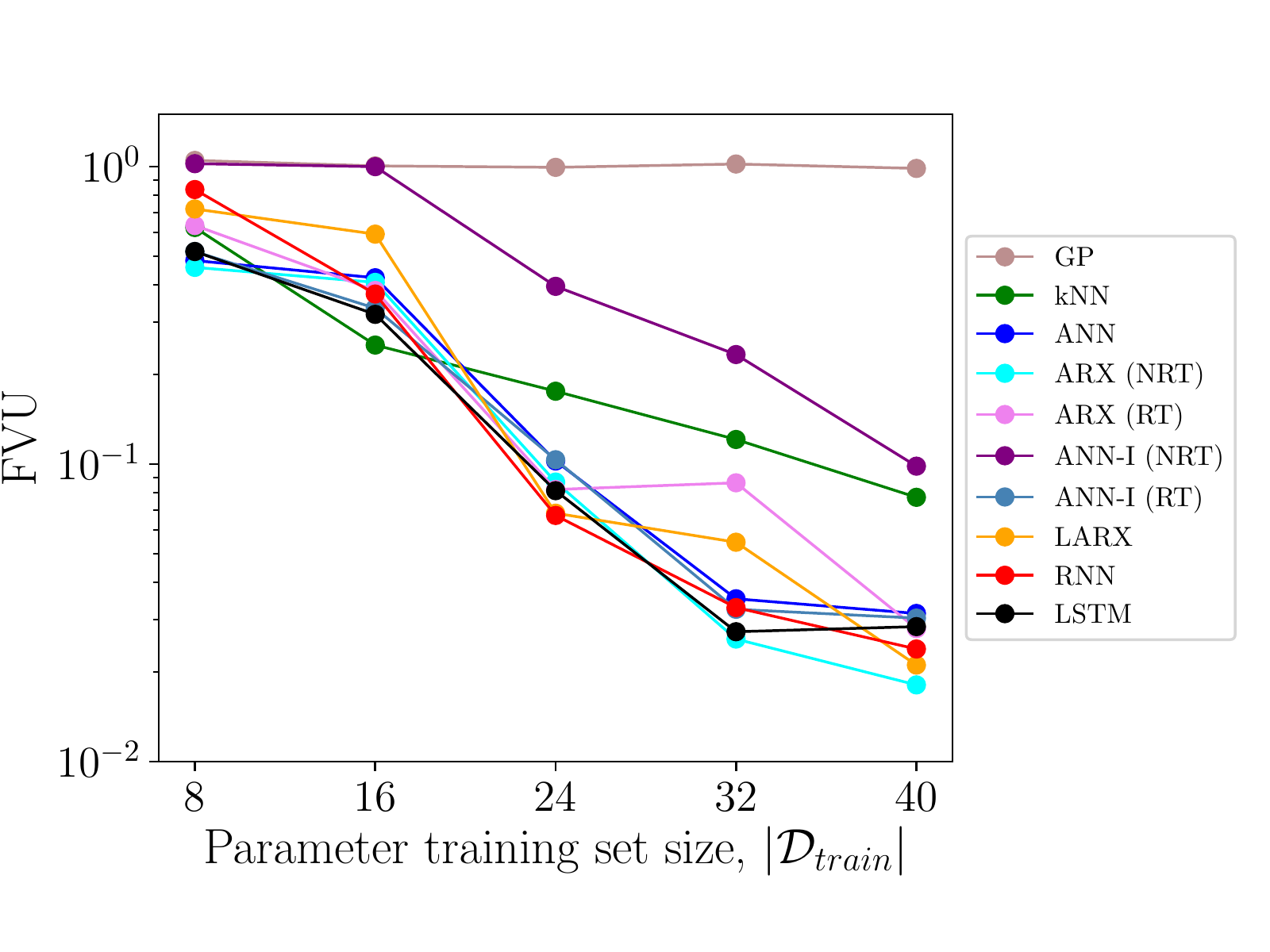}
\caption{QoI error $\qoiError$ (All feature-engineering methods)}
\label{fig:swe_converge_b}
\end{subfigure}
\begin{subfigure}[t]{0.49\textwidth}
\includegraphics[width=1.\linewidth]{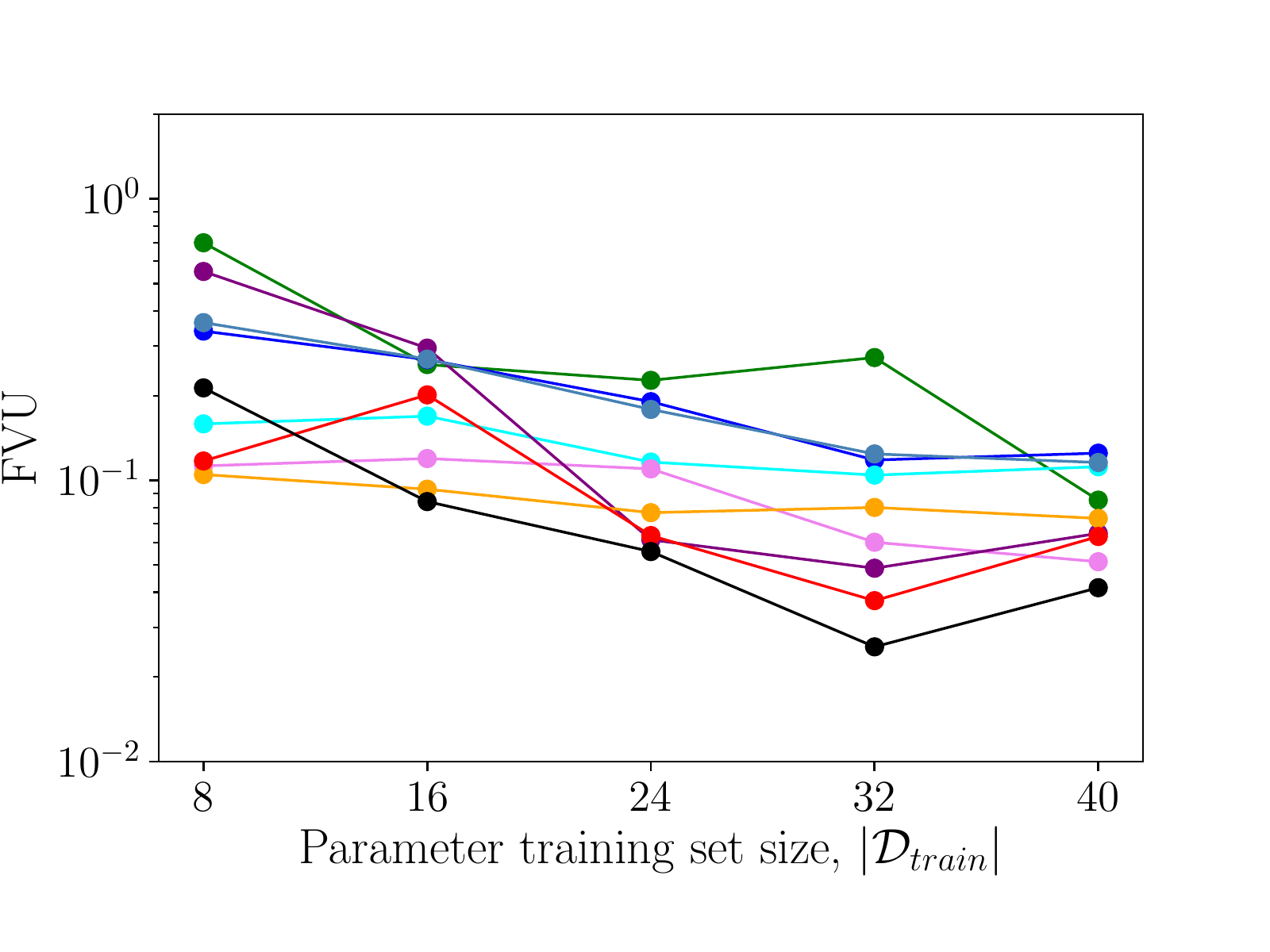}
\caption{Normed state error $\stateError$ (Feature-engineering methods without time)}
\label{fig:swe_converge_c}
\end{subfigure}
\begin{subfigure}[t]{0.49\textwidth}
\includegraphics[width=1.\linewidth]{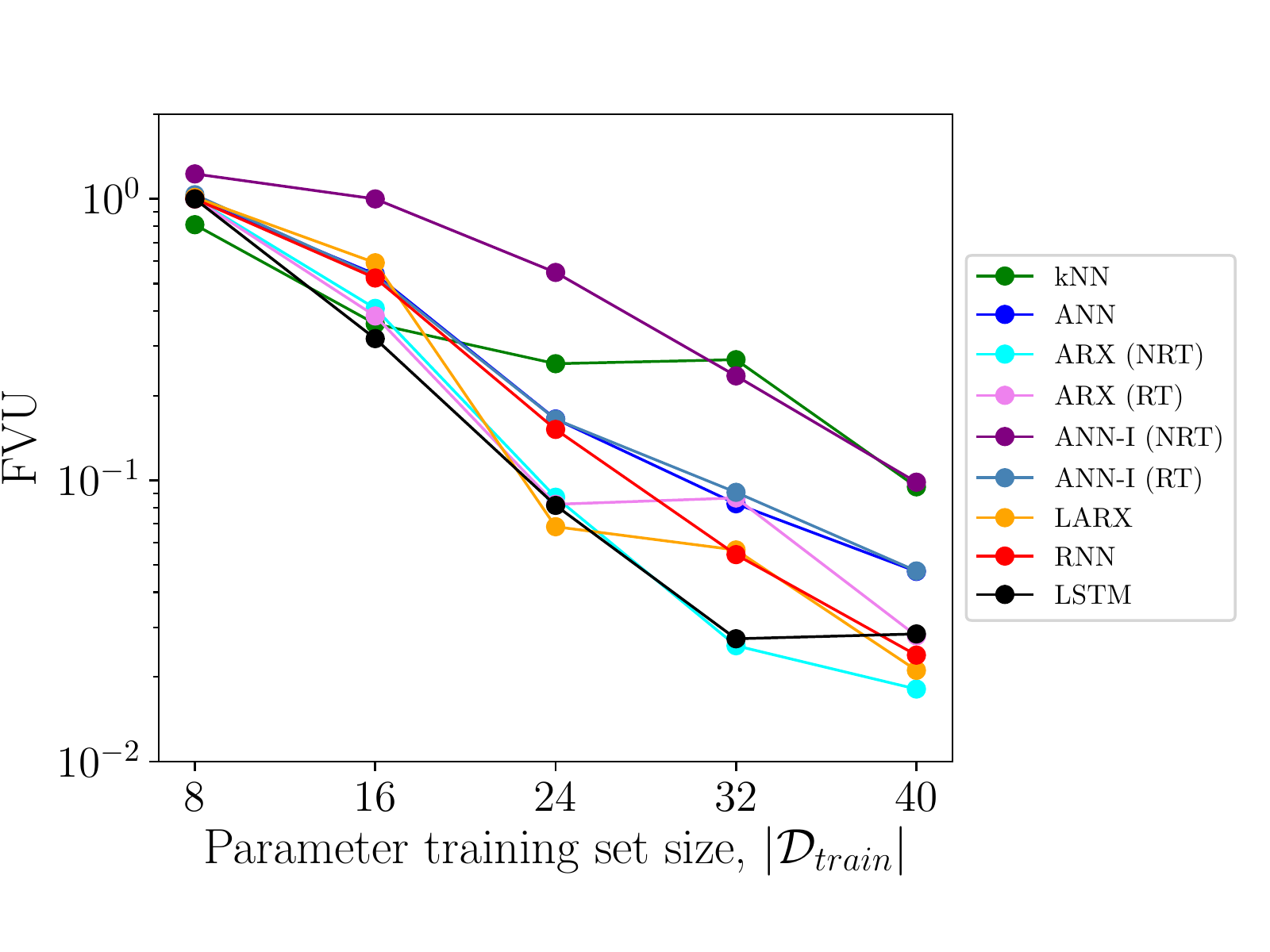}
\caption{QoI error $\qoiError$  (Feature-engineering methods without time)}
\label{fig:swe_converge_d}
\end{subfigure}
\end{center}
\caption{\textit{Shallow-water equations.} 
	Fraction of variance
	unexplained for all considered deterministic regression-function models,
	with each employing their best respective feature-engineering method.}
\label{fig:swe_converge}
\end{figure}

Table~\ref{tab:swe_summary} summarizes the percentage of cases that a
deterministic regression-function model produces results with the lowest FVU,
where each case is defined by one of the 11 feature-engineering methods
(recall that the residual, i.e., \ref{feat:r} is not considered in this
example) and
one of the 5 training-set sizes, yielding 55 total cases. We again present results for the cases where (1)
all feature methods are considered and (2) only feature methods that don't
include time are considered.  Again, the results indicate that LSTM yields the
best overall performance, followed by LARX and RNN. Recursive methods yield
the best performance for predicting the normed state error in $81.8\%$ of
cases when all feature engineering methods are considered, and in $86.7\%$ of
cases when only feature engineering methods that do not include time are
considered. For the QoI prediction, recursive methods yield the best
performance in $85.5\%$ and $86.7\%$ of cases, respectively. As before, this
shows that including time as a feature can (slightly) improve the relative performance of
non-recursive regression methods. Also as observed previously, including time as an 
input feature is observed to be more beneficial in predicting the normed state error than the QoI error.
\begin{table}[ht]
\small{
\begin{center}
\setlength{\tabcolsep}{4pt}
\begin{tabular}{|c| c | c c c c c c c c c | c|}
	\hline
error	&
\begin{tabular}{c }
	include feat.\\
	eng.\ w/ time?
\end{tabular}
	&kNN & ANN & \begin{tabular}{c} ARX \\ (NRT) \end{tabular} & \begin{tabular}{c} ARX \\ (RT) \end{tabular} &  \begin{tabular}{c} ANN-I \\ (NRT)\end{tabular}  & \begin{tabular}{c} ANN-I \\ (RT)\end{tabular} & LARX &  RNN & LSTM & 
	\begin{tabular}{c}	
	Recursive\\ total
	\end{tabular}\\ \hline
		$\stateError$ &Yes & 9.1 & 9.1& 0.0 & 5.5 & 9.1  & 9.1   & 5.5 & 7.3 & 45.5 & 81.8 \\  
		$\stateError$ &No  & 13.3 & 0.0 & 0.0 & 10.0 & 6.7 & 0.0 & 10.0  & 3.3 & 56.7  & 86.7 \\ 
		$\qoiError$   &Yes & 14.5 & 0.0 & 12.7 & 5.5 & 5.5 & 7.3 & 20.0  & 16.4  & 18.2 & 85.5  \\ 
		$\qoiError$   &No  & 13.3 & 0.0 & 16.7 & 3.3 & 3.3 & 3.3 & 16.7  & 13.3  & 30.0 & 86.7 \\ \hline
\end{tabular}
\end{center}
}
\caption{\textit{Shallow-water equations.} Percentage of cases having
	the lowest predicted FVU for each regression method. Results are summarized
	over all values $\card{\paramtrainindex} \in\{ 10,20,30,40,50\}$ and all
	feature-engineering methods. Recursive total reports the sum of all
	regression methods
	in \ref{cat:recursiveLatentPred} and 
	\ref{cat:recursive}.}
\label{tab:swe_summary}
\end{table}


Figure~\ref{fig:swe_grid} summarizes the performance of each combination of
regression method and feature-engineering method for the cases
$\card{\parameterdomainTrain} = 8$ (top) and 
$\card{\parameterdomainTrain} = 40$ (bottom).
For the case $\card{\paramtrainset} = 40$,
the recursive regression methods
generally display better overall performance than the non-recursive regression
methods in predicting the normed state error $\stateError$. The low-dimensional
feature engineering methods are additionally seen to lead to better overall
performance than the high-dimensional feature engineering methods in this
case, as using a small number of features typically leads to lower-capacity
models that generalize with limited
training data (see \ref{cons:numberFeatures}). For the QoI
error, all regression methods perform poorly for the case $\card{\paramtrainset} =
10$, with the lowest observed FVU value around $\text{FVU}
= 0.5$. For the case $\card{\paramtrainset} = 40$, all methods perform
substantially better; here, it is clear that residual-based features
are essential for obtaining good performance. LSTM, RNN, ANN, and ANN-I (RT) generally yield the best performance for
the normed state error $\stateError$; however, ANN  and ANN-I (RT) perform well only when
time is included as a feature. For the QoI prediction, LSTM, RNN, LARX,
and ARX (NRT) generally perform best.


\begin{figure}
\begin{center}
\begin{subfigure}[t]{0.49\textwidth}
\includegraphics[width=1.\linewidth]{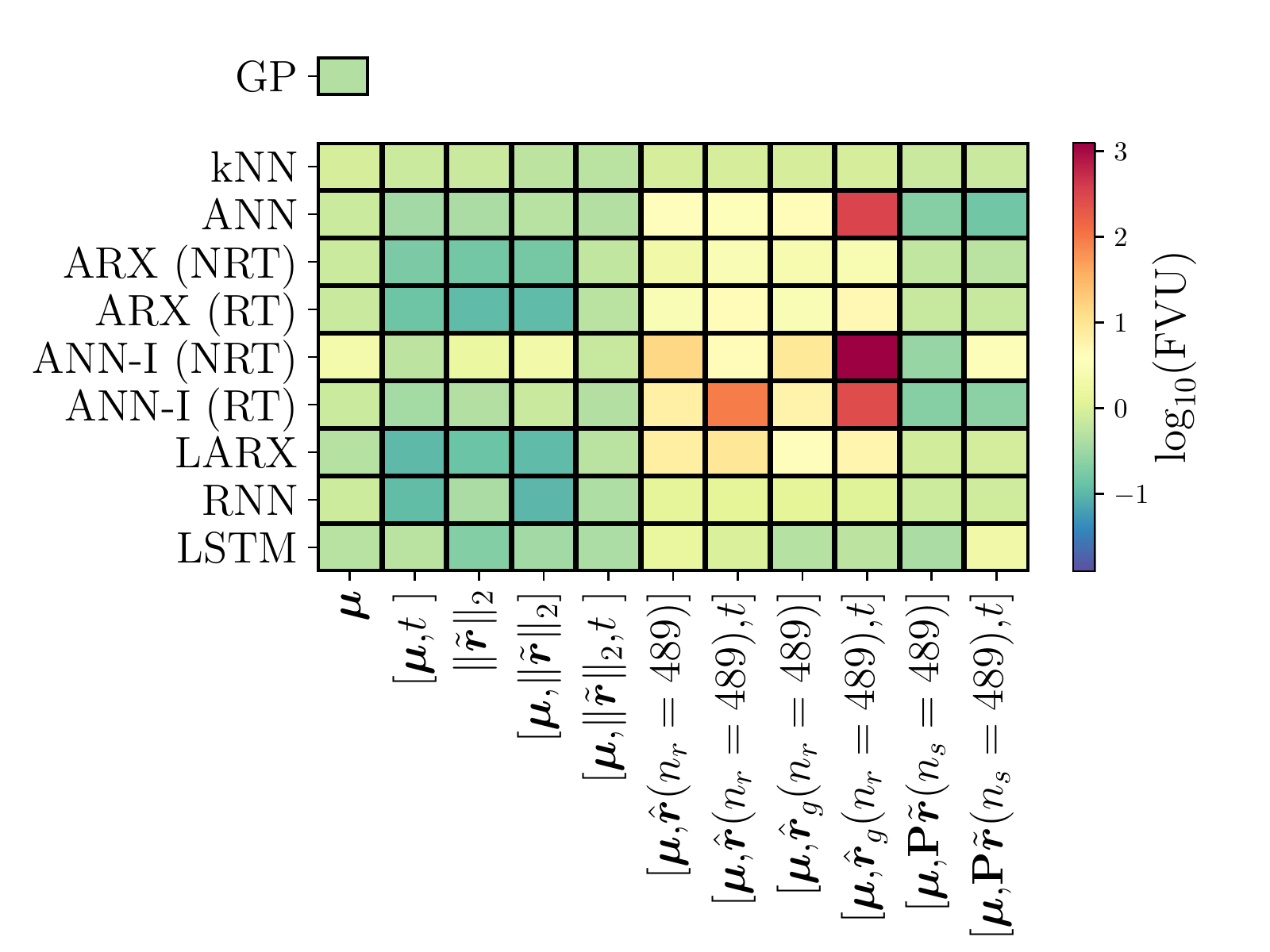}
	\caption{Normed state error $\stateError, \card{\paramtrainset} = 8$}
\end{subfigure}
\begin{subfigure}[t]{0.49\textwidth}
\includegraphics[width=1.\linewidth]{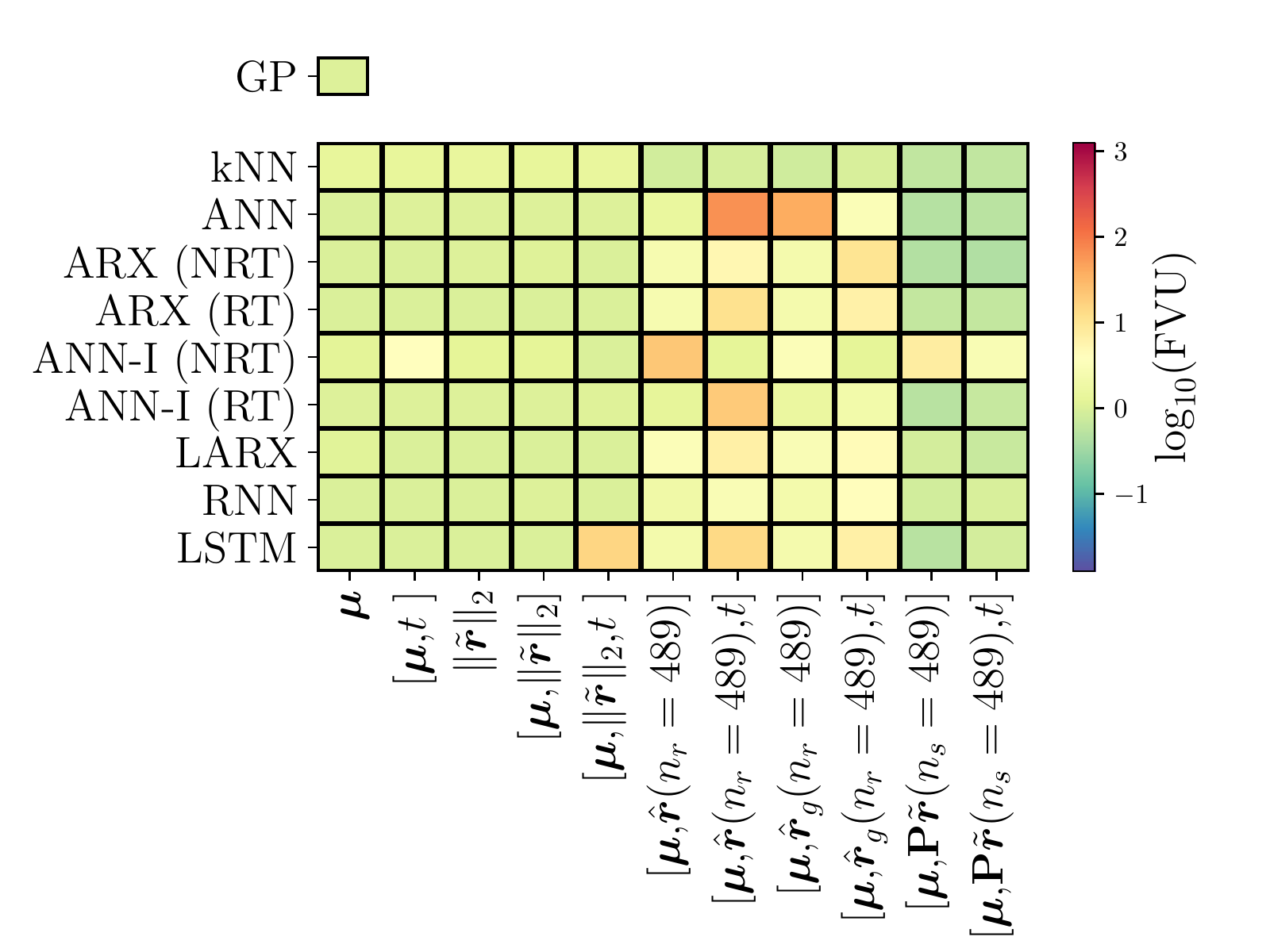}
	\caption{QoI error $\qoiError, \card{\paramtrainset} = 8$}
\end{subfigure}
\begin{subfigure}[t]{0.49\textwidth}
\includegraphics[width=1.\linewidth]{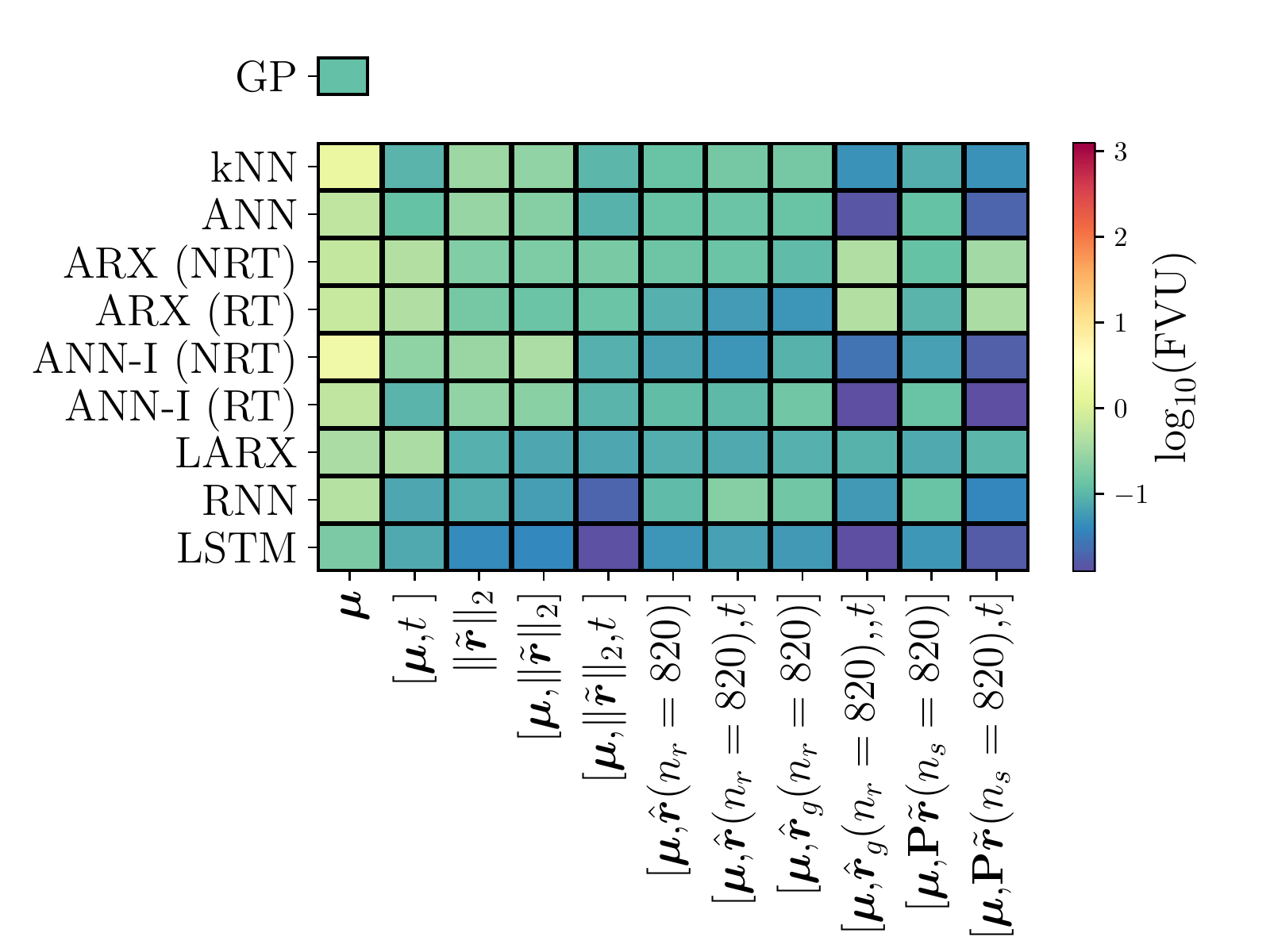}
	\caption{Normed state error $\stateError, \card{\paramtrainset} = 40$}
\end{subfigure}
\begin{subfigure}[t]{0.49\textwidth}
\includegraphics[width=1.\linewidth]{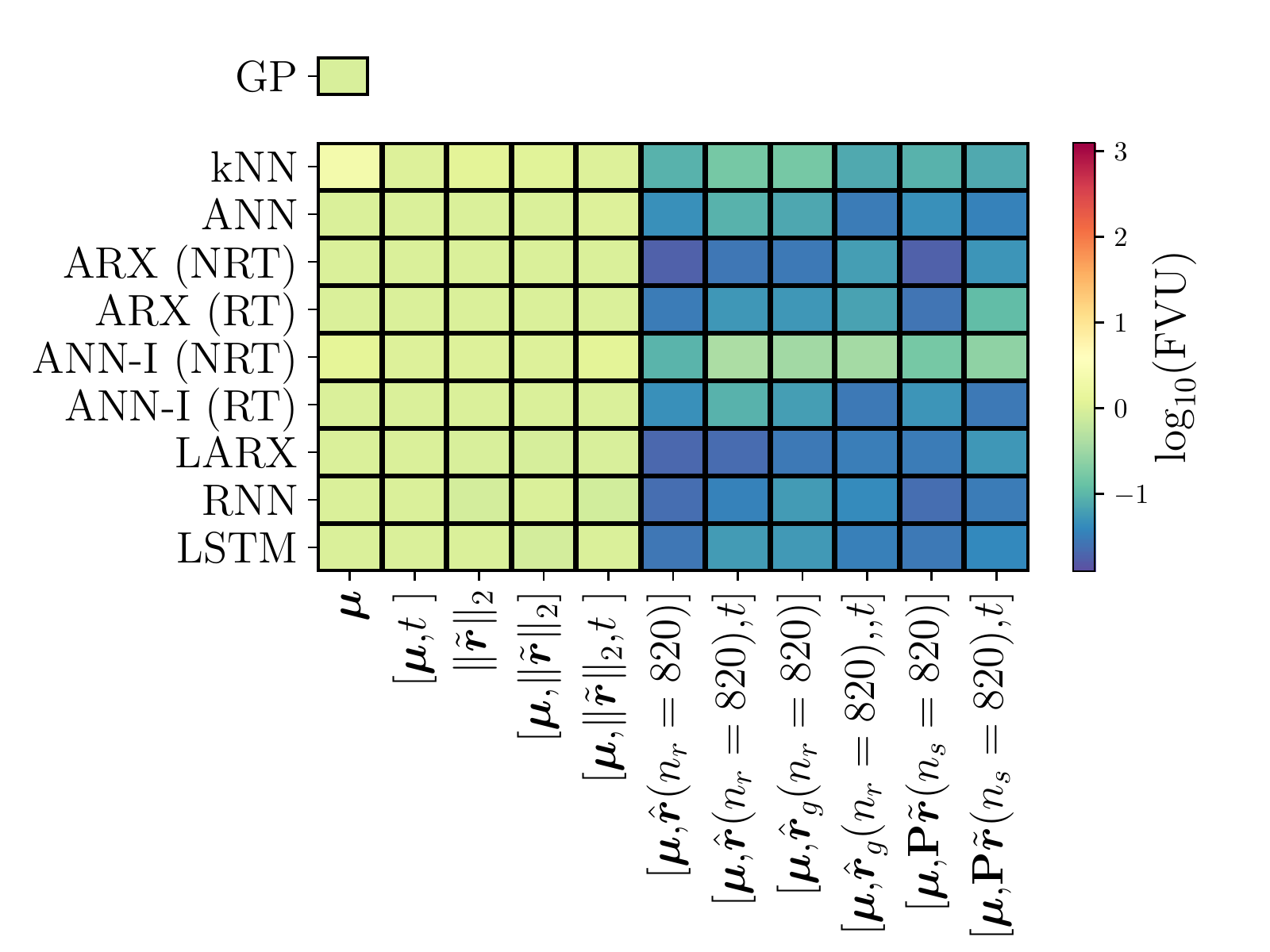}
	\caption{QoI error $\qoiError, \card{\paramtrainset} = 40$}
\end{subfigure}
\end{center}
\caption{\textit{Shallow-water equations.} Summary of the performance of
	each combination of regression method and feature-engineering method for the
	cases $\card{\paramtrainindex} = 8$ (top) and 
	$\card{\paramtrainindex} = 40$ (bottom).}
\label{fig:swe_grid}
\end{figure}


Figure~\ref{fig:swe_yhat_vs_t} shows the predictions made by the different
considered regression methods (for their best performing respective
feature-engineering method) as a function of time for a randomly drawn element
of $\parameterdomainTest$ for case $\card{\paramtrainset} = 40$.
The figure shows that all approaches except GP provide reasonably accurate
predictions for both the normed state and QoI errors.

Figure~\ref{fig:swe_hist} and Table~\ref{tab:swe_hist} summarize the
performance of the stochastic noise models, where results are shown for the
LSTM regression method employing its best performing
feature-engineering method for the case $\card{\paramtrainindex} = 40$.
Similar to the previous case, the Laplacian noise model yields the best
performance.


\begin{figure}
\begin{center}
\begin{subfigure}[t]{0.49\textwidth}
\includegraphics[width=1.\linewidth]{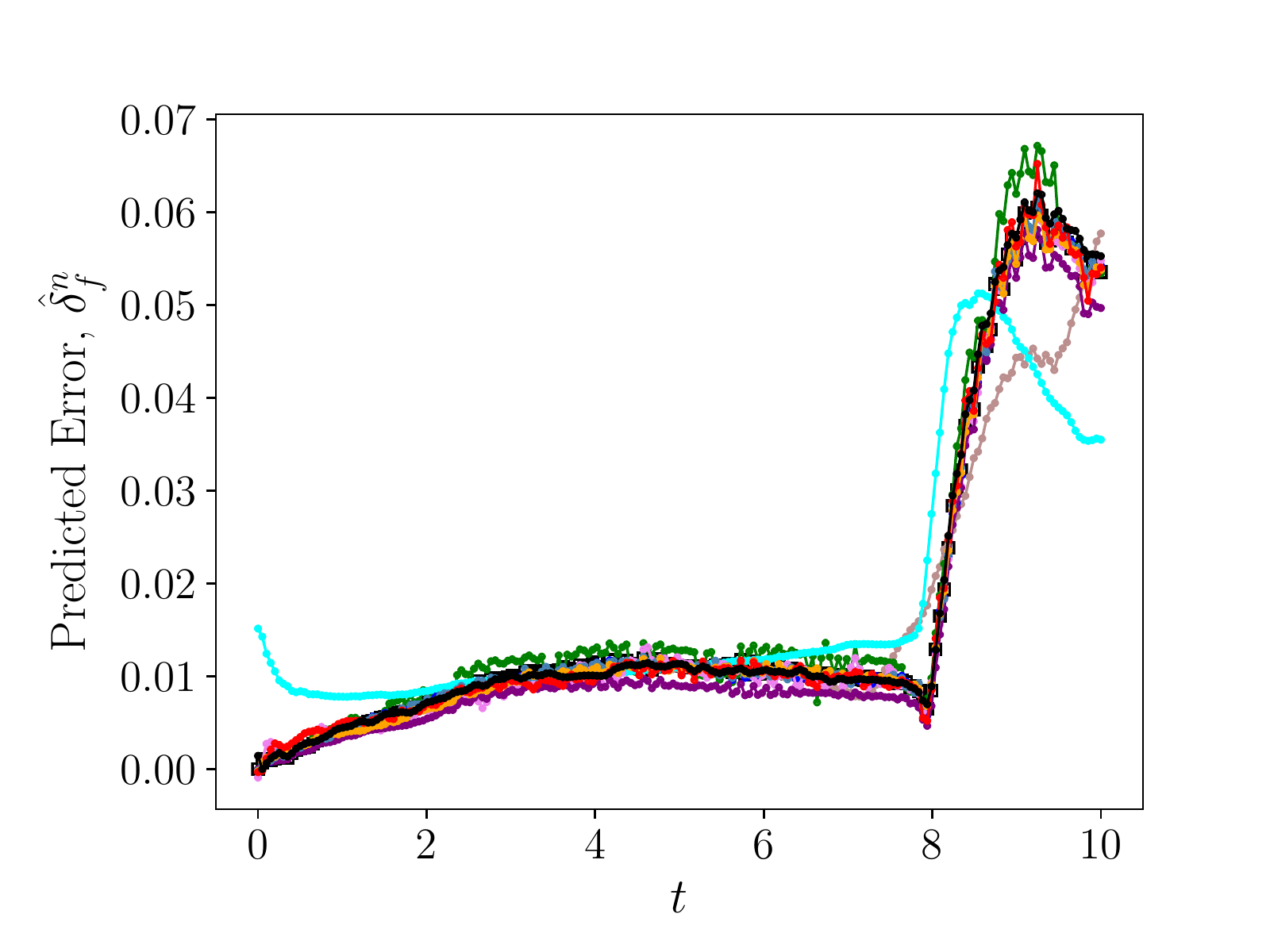}
\caption{Normed state error $\stateError$}
\end{subfigure}
\begin{subfigure}[t]{0.49\textwidth}
\includegraphics[width=1.\linewidth]{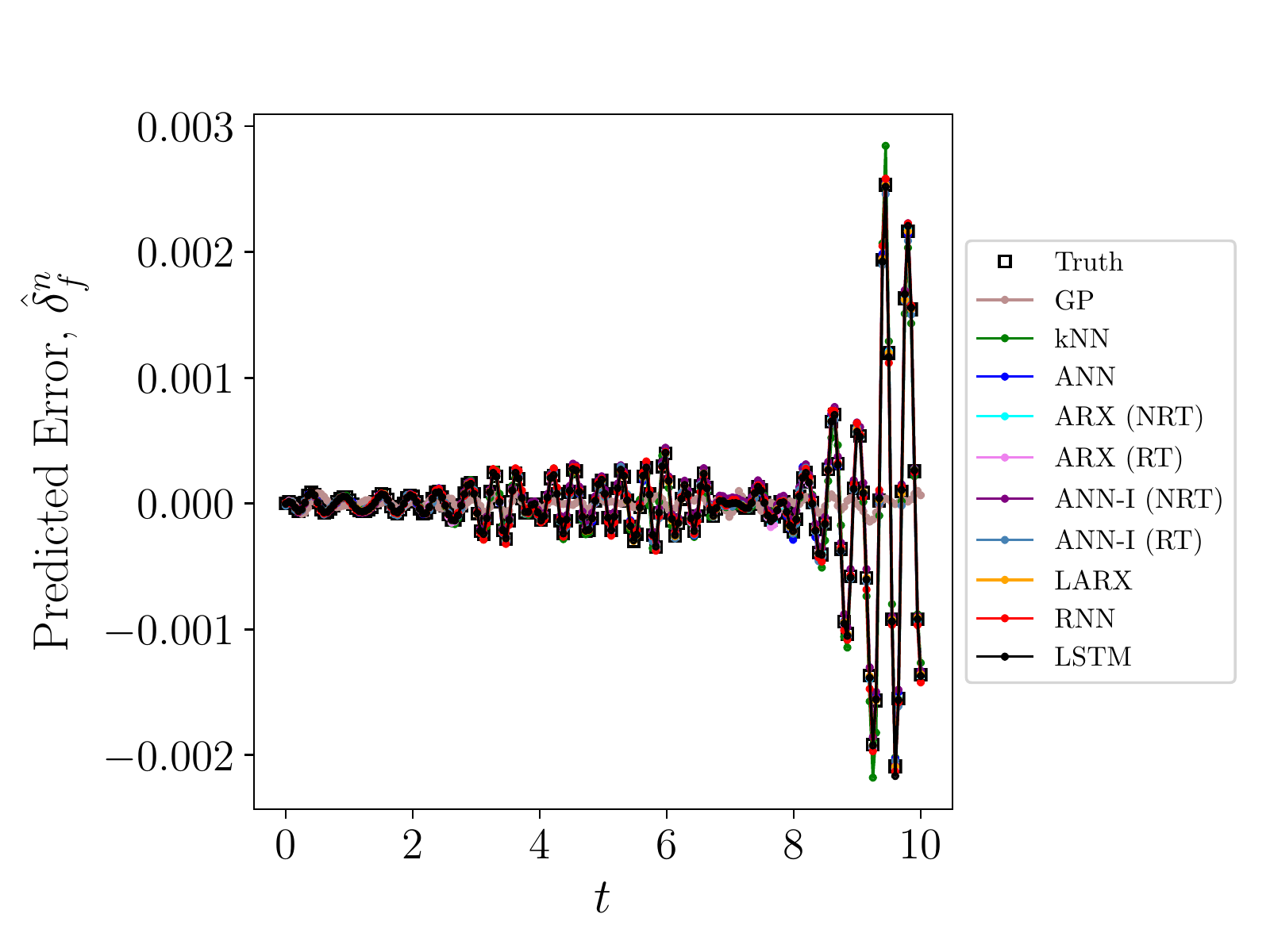}
\caption{QoI error $\qoiError$}
\end{subfigure}
\end{center}
\caption{\textit{Shallow-water equations.} Error response $\stateError$ (left) and $\qoiError$ (right)
	predicted by each regression method as a function of time. Results are shown
	for the best performing feature engineering method on the
	$\card{\paramtrainindex} = 40$ case.
}
\label{fig:swe_yhat_vs_t}
\end{figure}

\begin{figure}
\begin{center}
\begin{subfigure}[t]{0.49\textwidth}
\includegraphics[width=1.\linewidth]{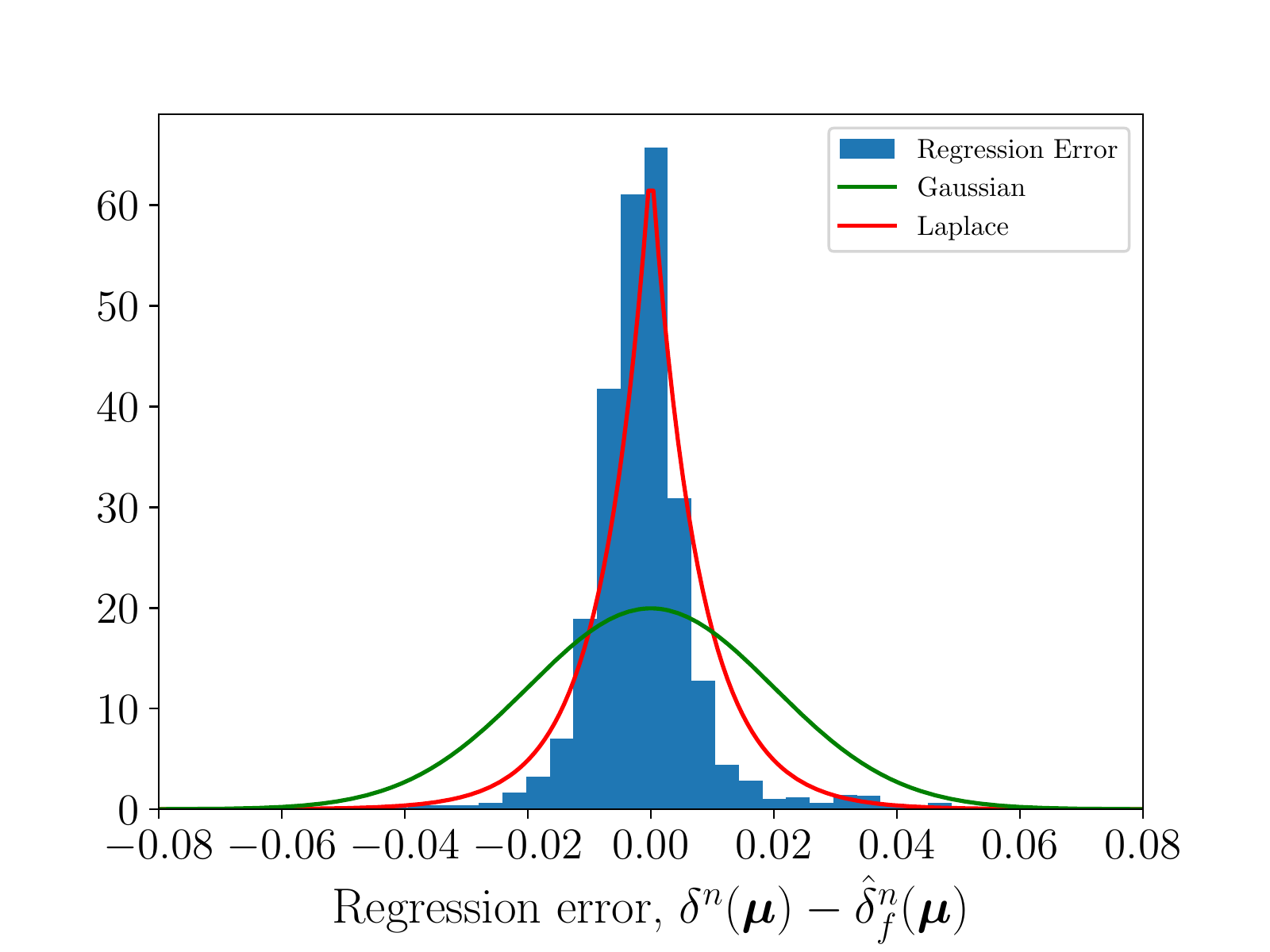}
\caption{Normed State Error}
\end{subfigure}
\begin{subfigure}[t]{0.49\textwidth}
\includegraphics[width=1.\linewidth]{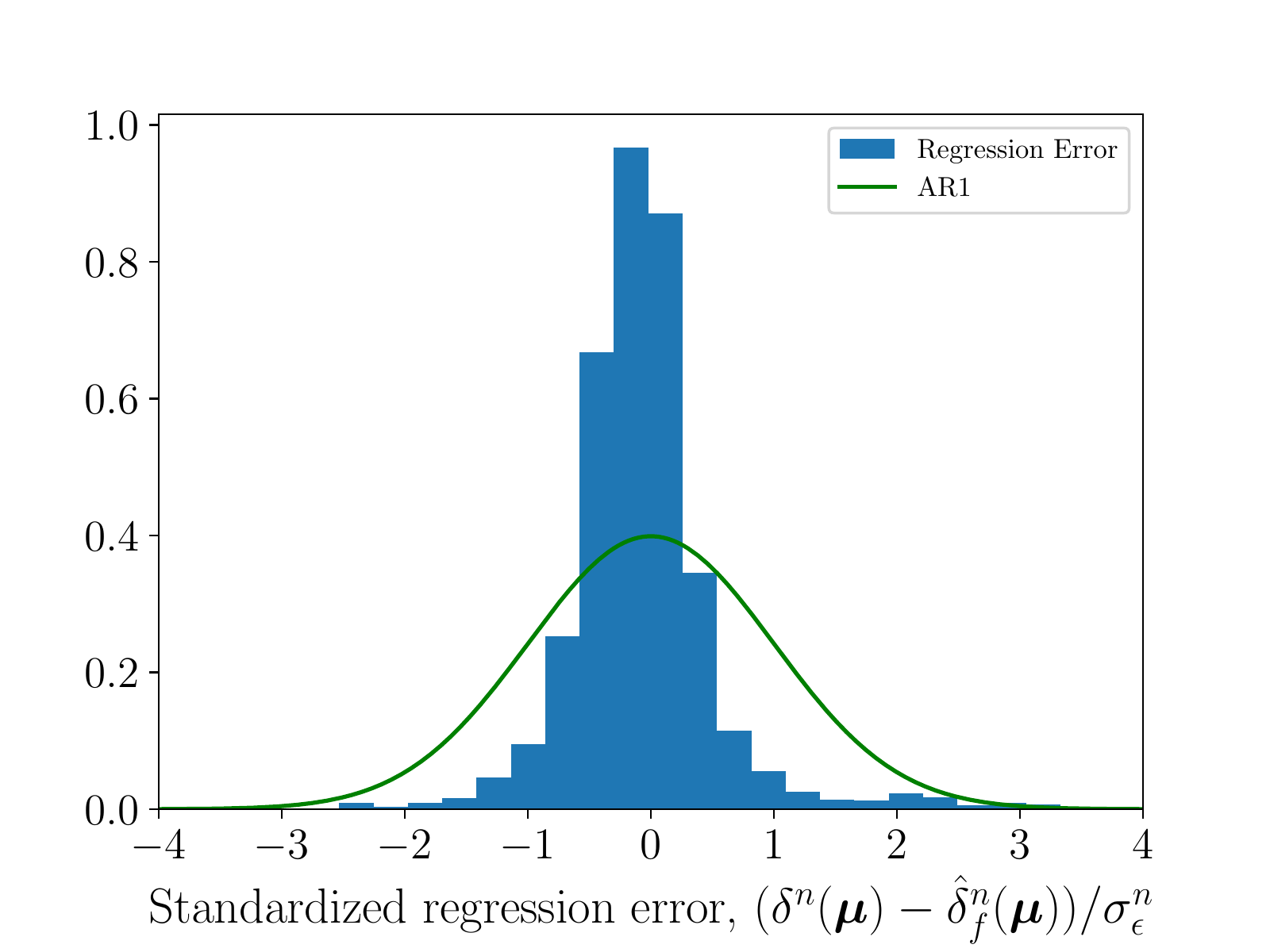}
\caption{Standardized Normed State Error}
\end{subfigure}

\begin{subfigure}[t]{0.49\textwidth}
\includegraphics[width=1.\linewidth]{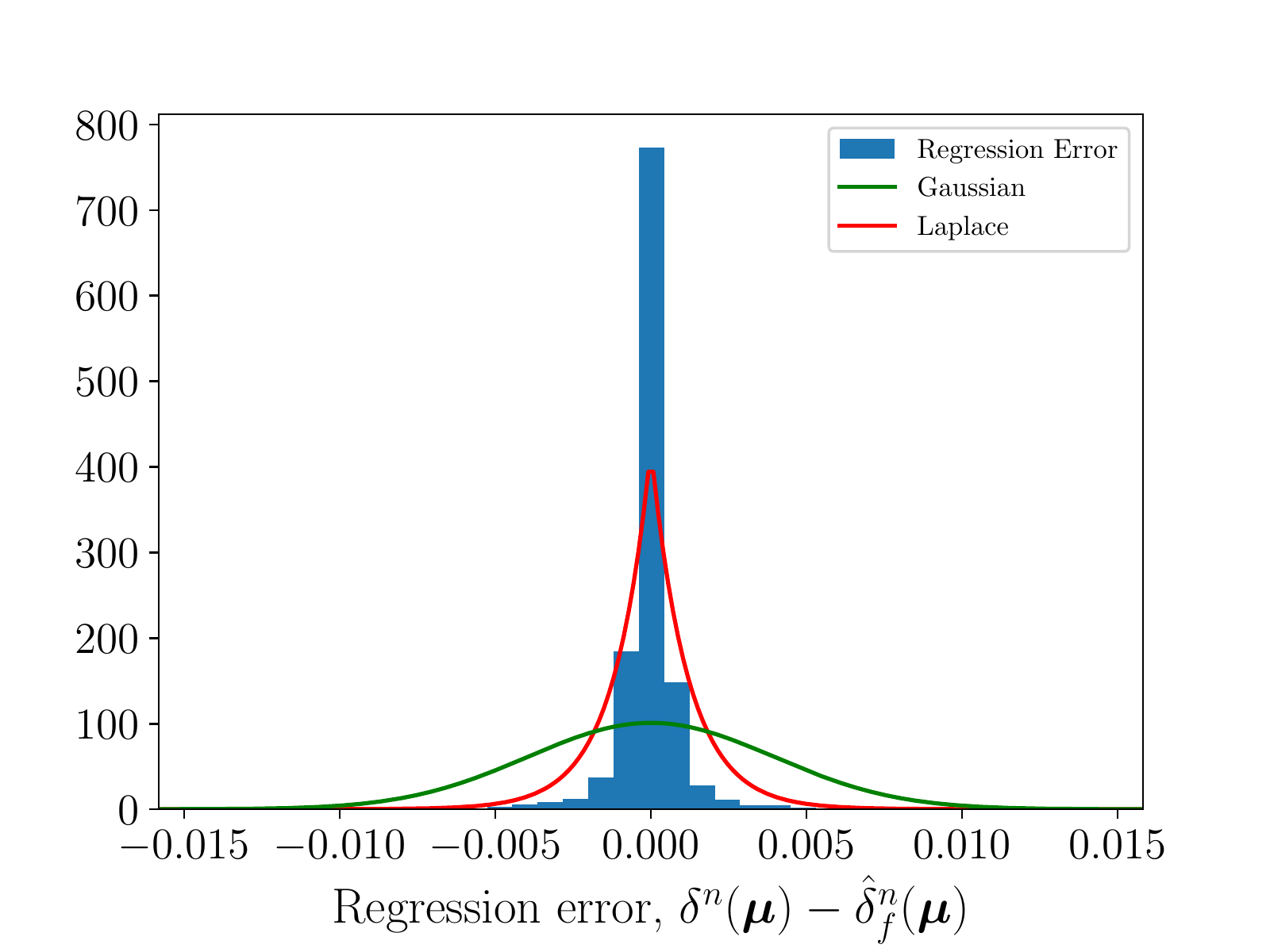}
\caption{QoI Error}
\end{subfigure}
\begin{subfigure}[t]{0.49\textwidth}
\includegraphics[width=1.\linewidth]{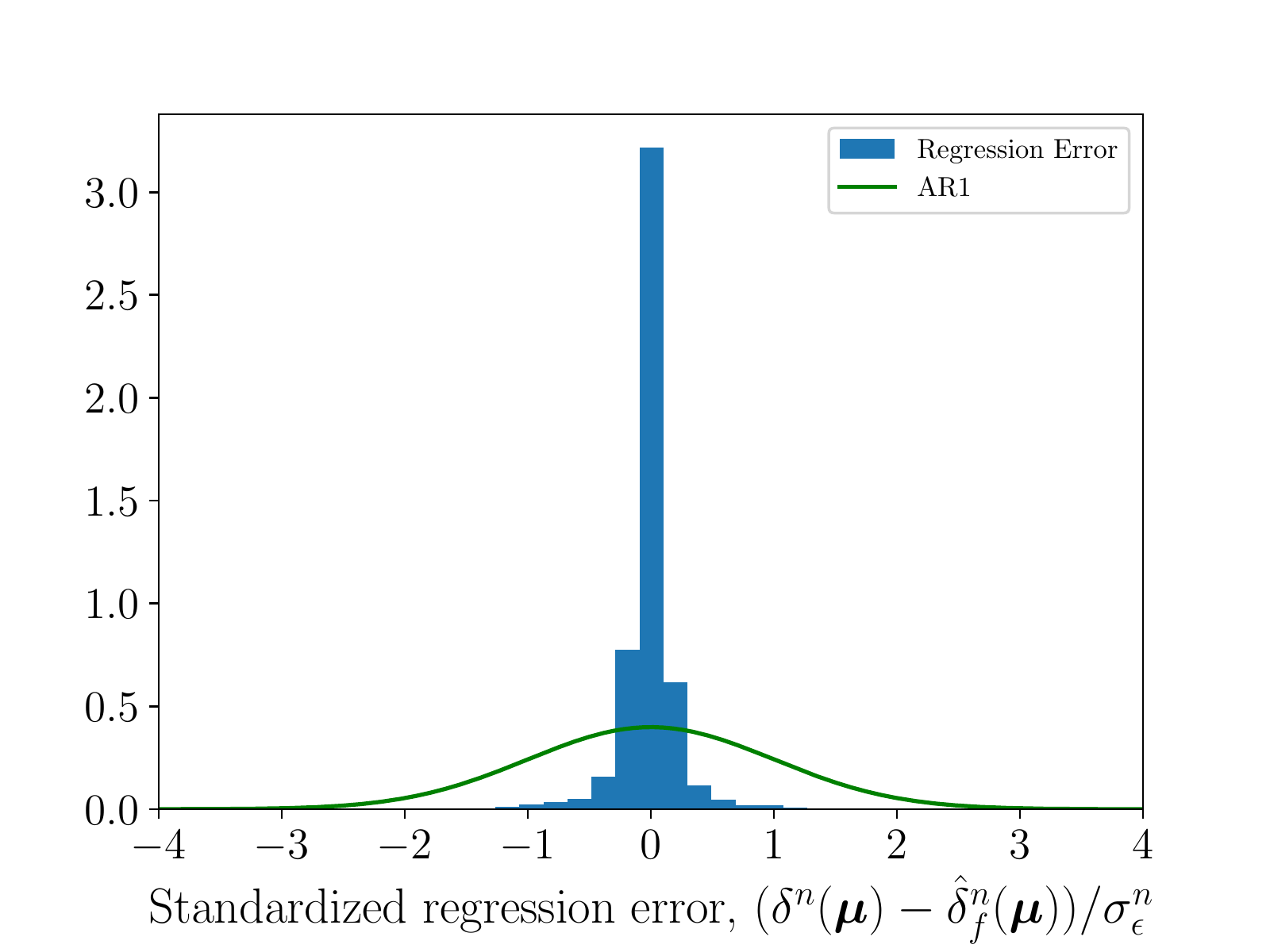}
\caption{Standardized QoI Error}
\end{subfigure}

\end{center}
\caption{\textit{Shallow-water equations.} Histogram of regression errors on the noise test set, $\testSetNoise$, for the normed state error (top) and QoI error (bottom). On the left, we show absolute errors $(\errorGennmuArg{n} - \errorApproxMeanGennmunArg{n})$, $\parametervec \in \paramtestsetNoise$, $n \in \timeIndexSet$. On the right, we show standardized errors, $(\errorGennmuArg{n} - \errorApproxMeanGennmunArg{n})/\sigma_{\epsilon}^n$, $\parametervec \in \paramtestsetNoise$, $n \in \timeIndexSet$. The true regression errors are compared to the distributions predicted by the Gaussian, Laplacian, and AR1 error models. Regression errors are shown 
for the LSTM deterministic regression function model with the best performing feature-engineering method on the $\card{\paramtrainindex} = 40$ case.}
\label{fig:swe_hist}
\end{figure}

\begin{table}[ht]
\begin{center}
\begin{tabular}{c | c c|  c c | c c}
    \hline
    & \multicolumn{2}{c|}{Gaussian}  & \multicolumn{2}{c|}{Laplacian} &
		\multicolumn{2}{c}{AR1} \\
    Prediction interval &  $\stateError$ & $\qoiError$ &  $\stateError$ & $\qoiError$ &  $\stateError$ & $\qoiError$   \\ \hline
 $\omega( 0.68 )$ &  0.956 & 0.971  & 0.801 & 0.848 & 0.927 & 0.972\\
 $\omega( 0.95 )$ & 0.991 & 0.995 & 0.966 & 0.960  & 0.984 & 0.990 \\
 $\omega( 0.99 )$ & 0.998 & 0.996 & 0.987 & 0.988 & 0.987 & 0.991 \\
 \hline
 K-S Statistic & 0.265 & 0.323 & 0.125 & 0.146 & 0.231 & 0.350\\ \hline
\end{tabular}
\end{center}
\caption{\textit{Shallow-water equations.} Prediction
	intervals for the various stochastic noise models.
	Results correspond to those generated by the
	best LSTM regression function model  
	with its  best performing feature-engineering method on the
	$\card{\paramtrainindex} = 40$ case, with samples collected over $\testSetNoise$.
	}
\label{tab:swe_hist}
\end{table}

\subsection{Example 3: Inviscid Burgers' equation with coarse-mesh low-fidelity model}
The final example considers the one-dimensional inviscid Burgers' equation
and approximate solutions generated by a 
low-fidelity model provided by a coarse spatial discretization.
The governing PDE is 
\begin{equation}\label{eq:burgers}
\frac{\partial u}{\partial t} + \frac{1}{2} \frac{\partial u^2}{\partial
	\spatialvar } = \parameter_1 e^{\parameter_2 \spatialvar },\quad
u(0,t) = \parameter_3, \quad u(\spatialvar ,0) = \parameter_4
	 \text{ for }\spatialvar\in (0,100]
\end{equation}
on the domain $\spatialvar  \in \physDomain = [0,100]$ with $t \in [0,T]$ and
$T=40$, and $u: \physDomain \times [0,T] \rightarrow \RR{}$.
We employ a parameter domain of
$(\parameter_1,\parameter_2,\parameter_3,\parameter_4)\in\parameterdomain =
[0.005,0.05]\times[0.005,0.05]\times[3,5]\times [0.5,2.5]
$.

To derive a parameterized dynamical system of the form \eqref{eq:fom} for the
FOM, we apply a finite-volume spatial discretization to the governing
equations \eqref{eq:burgers}, which uses an upwind flux at the cell interfaces
and a uniform control-volume width of  $\Delta \spatialvar  = 0.1$, yielding a
FOM with $\nfom=1000$ degrees of freedom. 

To define the FOM O$\Delta$E of the form \eqref{eq:fom_discretized}, we employ
a time step 
$\Delta t = 0.05$ with the
implicit Euler scheme, which is characterized by multistep
coefficients $\alpha_0 = 1$, $\alpha_1 = -1$, $\beta_0=1$ in
\eqref{eq:FOMODeltaERes}, which yields $\ntimesteps =800$ time instances.
 
To construct approximate solutions, we employ coarse-mesh low-fidelity models
as described in Section \ref{sec:LFM}. In particular, to define the LFM ODE of
the form \eqref{eq:approx_model}, we employ the same finite-volume scheme as
that employed by the FOM, but use a uniform control-volume width of  $\Delta
\spatialvar  = 2$ such that $\dimLF= 50$. The
prolongation operator $\prolongate$ consists of a linear interpolation of the
coarse-grid solution onto the fine grid.

We model the QoI error $\qoiErrorArg{\tindx}(\parametervec)$, where the QoI
corresponds to the solution at the midpoint of the domain such that the
associated functional is $ \qoiFunc(\statevec;t,\parametervec)\mapsto
\kroneckerVecArg{501}^T\statevec $.

\subsubsection{Coarse time grid, training, validation, and test sets}
We now describe how we execute Steps
\ref{step:timeIndexSet}--\ref{step:trainValTest} and \ref{train:noiseextract}
described in Section \ref{sec:trainingModelSel}. For Step
\ref{step:timeIndexSet}, we set $\timeIndexSet = \{8n\}_{n=1}^{100}$.
For Step \ref{step:trainValTest}, we employ the same approach as in the previous
section. Namely, we employ set sizes of
$\card{\parameterdomainTrain} = 40$, $\card{\parameterdomainVal}=10$, and
$\card{\parameterdomainTest} = 50$.  To assess the impact of the training set
size on the performance of the deterministic regression-function model, we
consider four smaller training and validation sets with
$(\card{\parameterdomainTrain},\card{\parameterdomainVal}) \in\{(8,2),(16,4),
(24,6), (32,8)\}$, which are constructed by sampling the original sets
$\parameterdomainTrain$ and $\parameterdomainVal$.  In Step
\ref{train:noiseextract}, we set $\card{\paramtrainsetNoise}=20$.

\subsubsection{Regression results}

Figures~\ref{fig:brian_converge} through~\ref{fig:brian_hist} summarize the
performance of the deterministic regression-function models and stochastic
noise models.  First, Figure~\ref{fig:brian_converge} reports the dependence
of the performance of each deterministic regression-function model (with its
best performing feature-engineering method) on the size of the training set
$\card{\parameterdomainTrain}$.  Figure~\ref{fig:brian_converge_1} considers
all feature-engineering methods, while Figure~\ref{fig:brian_converge_2} limits
consideration to feature-engineering methods that omit time from the set of
features.  We observe that LSTM and RNN again yield the best performance,
although their improvement over ANN is less significant than what was observed
in the previous two examples.  We observe that all other methods---including
the time-local GP benchmark---perform substantially worse than these three
techniques.


\begin{figure}
\begin{center}
\begin{subfigure}[t]{0.49\textwidth}
\includegraphics[width=1.\linewidth]{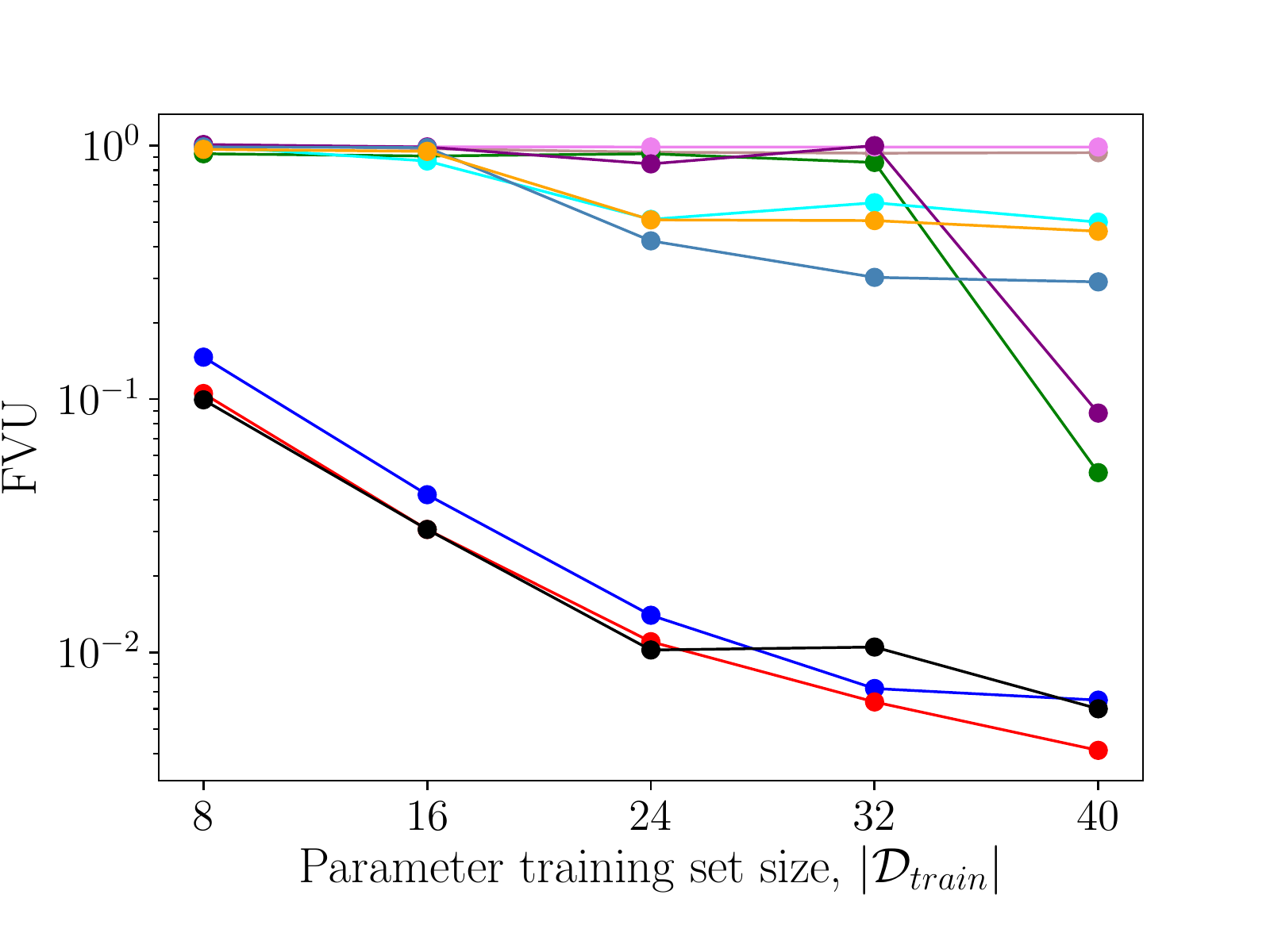}
\caption{QoI error $\qoiError$ (All feature-engineering methods)}
\label{fig:brian_converge_1}
\end{subfigure}
\begin{subfigure}[t]{0.49\textwidth}
\includegraphics[width=1.\linewidth]{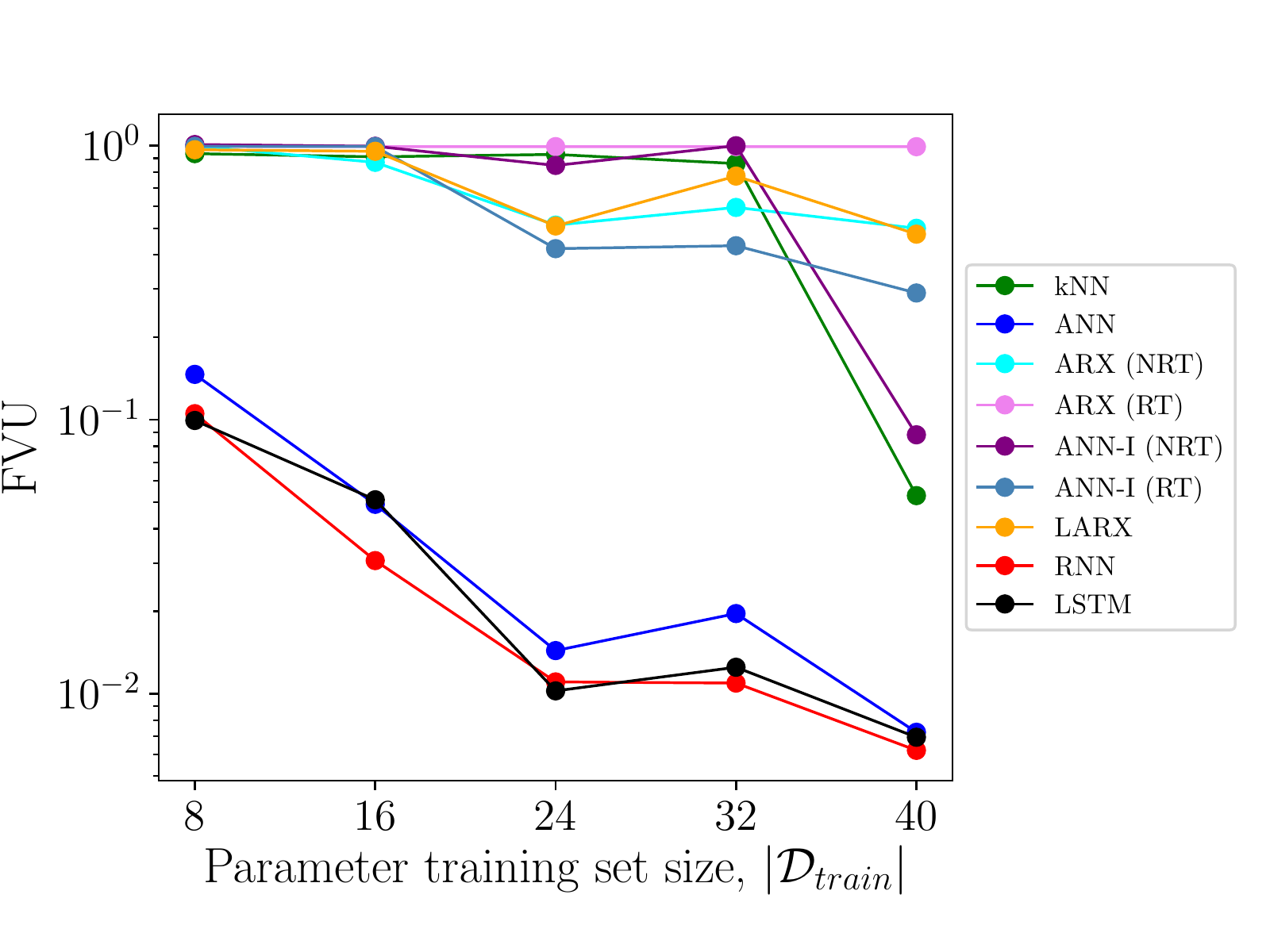}
\caption{QoI error $\qoiError$ (Feature-engineering methods without time)}
\label{fig:brian_converge_2}
\end{subfigure}
\end{center}
	\caption{\textit{Inviscid Burgers' equation}. Fraction of variance
	unexplained for all considered deterministic regression-function models,
	with each employing their best respective feature-engineering method.}
\label{fig:brian_converge}
\end{figure}

Next, Table~\ref{tab:brian_summary} summarizes the percentage of cases that a
deterministic regression-function model produces results with the lowest FVU,
where each case is defined by one of the 13 feature-engineering methods and
one of the 5 training-set sizes, yielding 65 total cases.  Results are
presented for the cases where (1) all feature methods are considered and (2)
only feature methods that don't include time are considered. The table shows
that LSTM is the best overall performing regression-function model, leading to
the lowest FVUs in over half of cases. RNN yields the next best
performance, followed by ANN.  We again observe that recursive regression
methods yield the best performance in the vast majority of cases; we also
again observe that including time can improve the relative performance of
non-recursive regression methods.

\begin{table}[ht]
\small{
\begin{center}
\setlength{\tabcolsep}{4pt}
\begin{tabular}{|c| c | c c c c c c c c c | c|}
	\hline
error	&
\begin{tabular}{c }
	include feat.\\
	eng.\ w/ time?
\end{tabular}
	&kNN & ANN & \begin{tabular}{c} ARX \\ (NRT) \end{tabular} & \begin{tabular}{c} ARX \\ (RT) \end{tabular} & \begin{tabular}{c} ANN-I \\ (NRT)\end{tabular} & \begin{tabular}{c} ANN-I \\ (RT)\end{tabular} & LARX &  RNN & LSTM & 
	\begin{tabular}{c}	
	Recursive\\ total
	\end{tabular}\\ \hline
		$\qoiError$   &Yes & 0.0 & 12.3 &0.0   & 0.0   & 0.0 & 0.0 & 1.5 & 35.4 & 50.8 & 87.7 \\ 
		$\qoiError$   &No  & 0.0 & 5.7 &0.0   & 0.0   & 0.0 & 0.0 & 2.9 & 34.3 & 57.1 & 94.3  \\ \hline
\end{tabular}
\end{center}
}
\caption{\textit{Inviscid Burgers' equation}. Percentage of cases having
	the lowest predicted FVU for each regression method. Results are summarized
	over all values $\card{\paramtrainindex} \in\{ 8,16,24,32,40\}$ and all
	feature-engineering methods. Recursive total reports the sum of all
	regression methods
	in \ref{cat:recursiveLatentPred} and 
	\ref{cat:recursive}. 
	}
\label{tab:brian_summary}
\end{table}


Figure~\ref{fig:brian_grid} summarizes the performance of each combination of
regression method and feature-engineering method for the case
$\card{\parameterdomainTrain} = 40$.  We once again observe that
feature-engineering methods employing elements of the residual lead to
the best performance. We also observe that the LSTM and RNN methods yield the best
performance, and substantially outperform the time-local GP method; indeed,
they generate FVU values around 0.1 when they use only the parameters
$\parametervec$ as features. These methods also generate FVU values lower than
0.01 when using only 46 residual samples in the set of features.

Figure~\ref{fig:brian_yhat_vs_t_qoi} shows the predictions made by the
different considered regression methods (for their best performing respective
feature-engineering method) as a function of time for a randomly drawn element
of $\parameterdomainTest$ for cases $\card{\paramtrainset} = 8$ and
$\card{\paramtrainset} = 40$.  Here, we observe that LSTM, RNN, and ANN
provide the most accurate predictions; indeed, their predictions are already
very accurate for only $\card{\paramtrainset} = 8$. The time-local GP benchmark and ARX
models perform the worst. 

Lastly, Figure~\ref{fig:brian_hist} and Table~\ref{tab:brian_hist} summarize
the performance of each stochastic noise model for the LSTM 
regression method employing its best performing
feature-engineering method.  Similar to the previous cases, the Laplacian
stochastic noise model provides the best stochastic noise model.

\begin{figure}
\begin{center}
\begin{subfigure}[t]{0.49\textwidth}
\includegraphics[width=1.\linewidth]{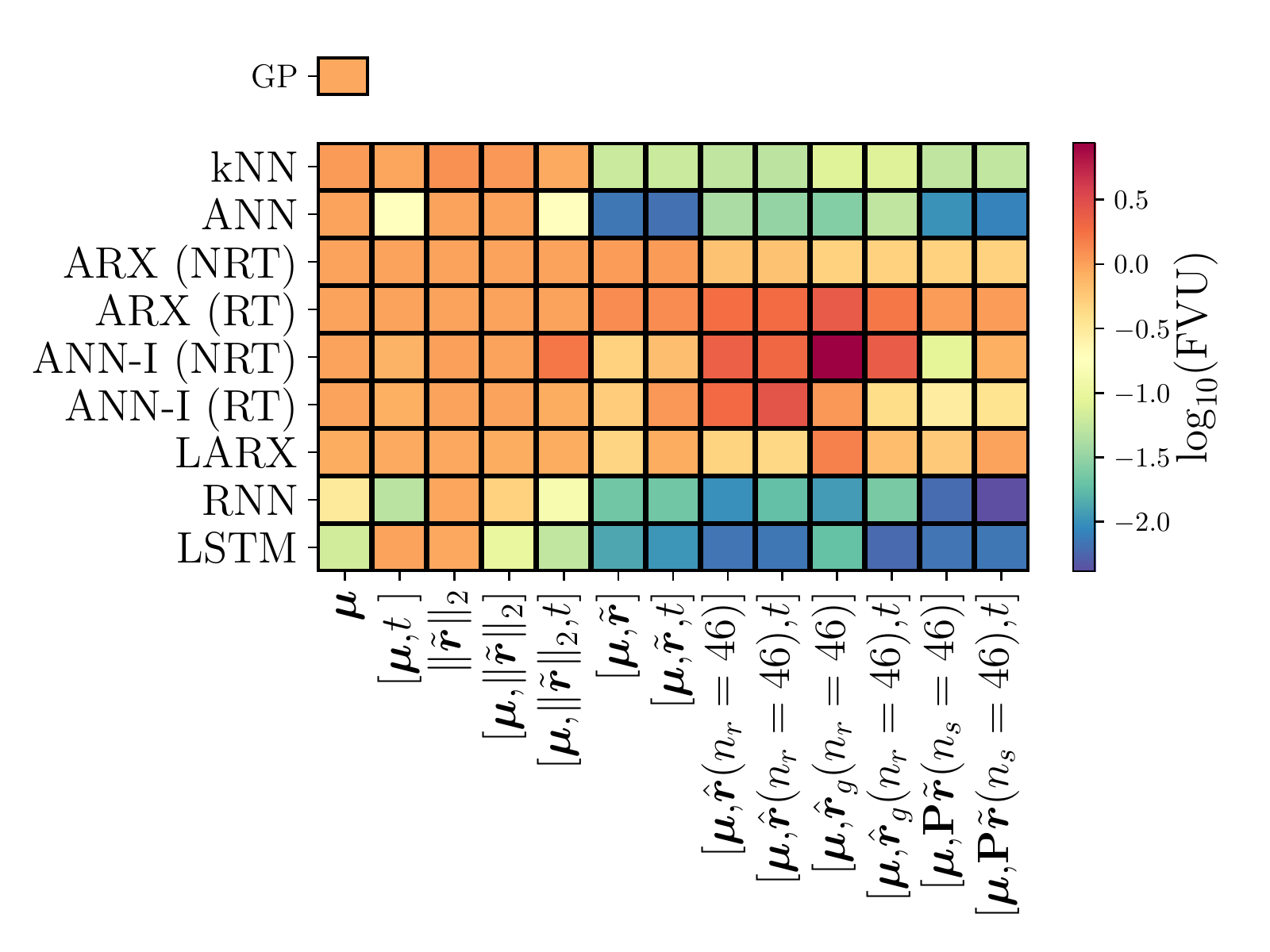}
	\caption{QoI error $\qoiError$ }
\end{subfigure}

\end{center}
\caption{\textit{Inviscid Burgers' equation}.
	Summary of the performance of
	each combination of regression method and feature-engineering method for the
	case
	$\card{\paramtrainindex} = 40$.}
\label{fig:brian_grid}
\end{figure}

\begin{figure}
\begin{center}
\begin{subfigure}[t]{0.49\textwidth}
\includegraphics[width=1.\linewidth]{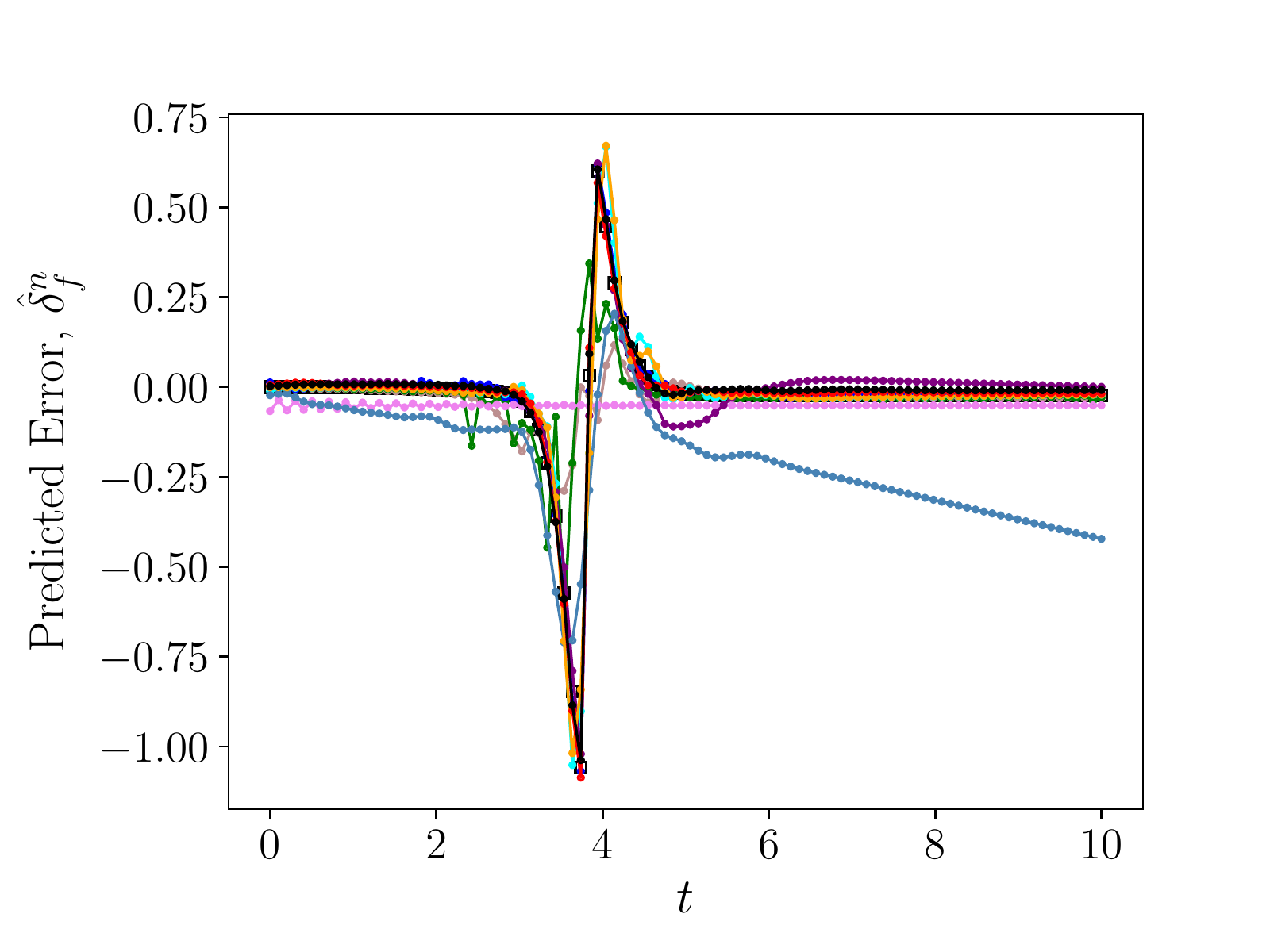}
	\caption{$\card{\paramtrainindex} = 8$}
\end{subfigure}
\begin{subfigure}[t]{0.49\textwidth}
\includegraphics[width=1.\linewidth]{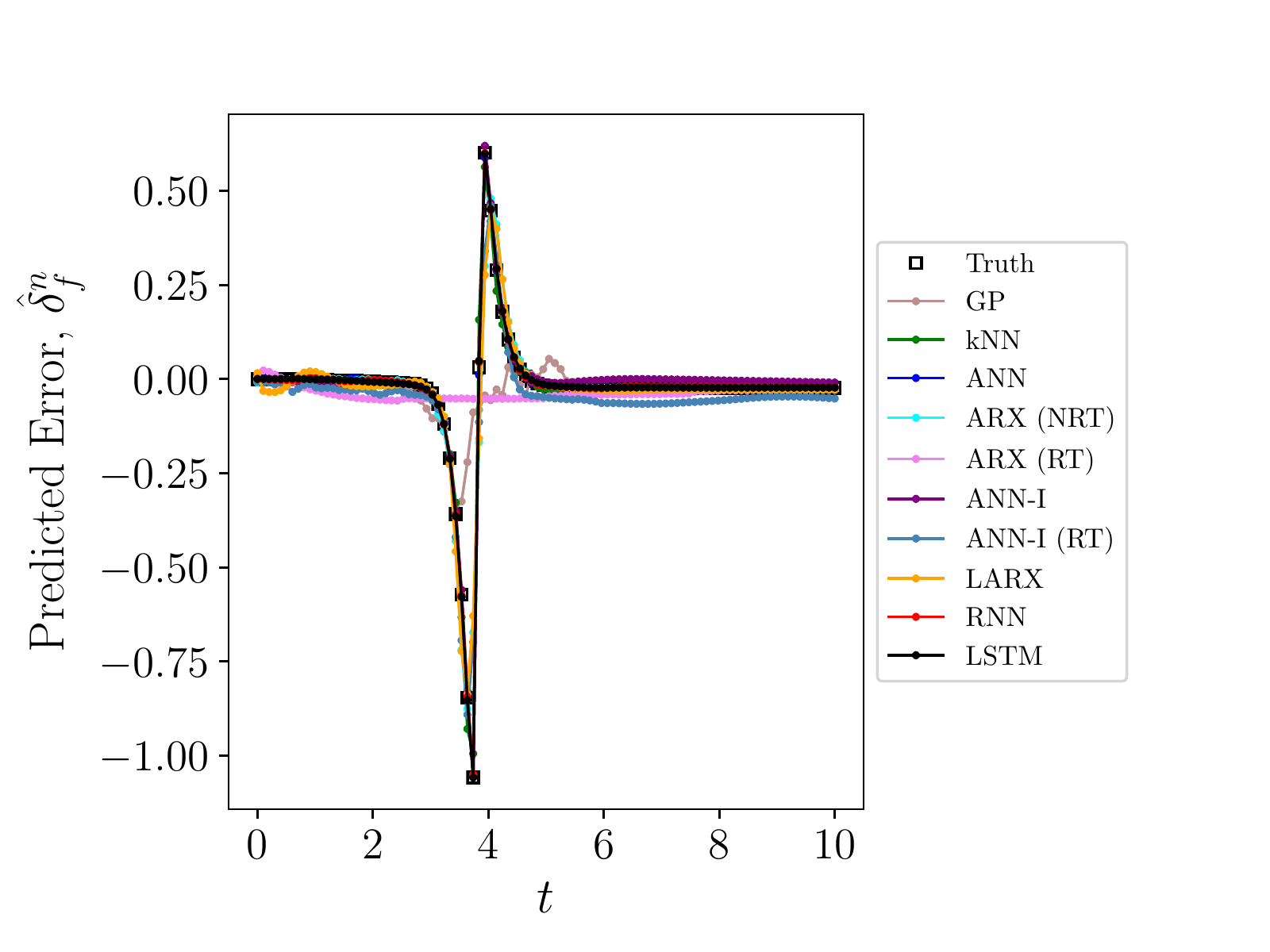}
	\caption{$\card{\paramtrainindex} = 40$}
\end{subfigure}
\end{center}
	\caption{\textit{Inviscid Burgers' equation}. 
	Error response $\qoiError$ 
	predicted by each regression method as a function of time. Results are shown
	for the best performing feature engineering method on the
	$\card{\paramtrainindex} = 8$ case (left) and the 
	$\card{\paramtrainindex} = 40$ case (right).
}
\label{fig:brian_yhat_vs_t_qoi}
\end{figure}

\begin{figure}
\begin{center}
\begin{subfigure}[t]{0.49\textwidth}
\includegraphics[width=1.\linewidth]{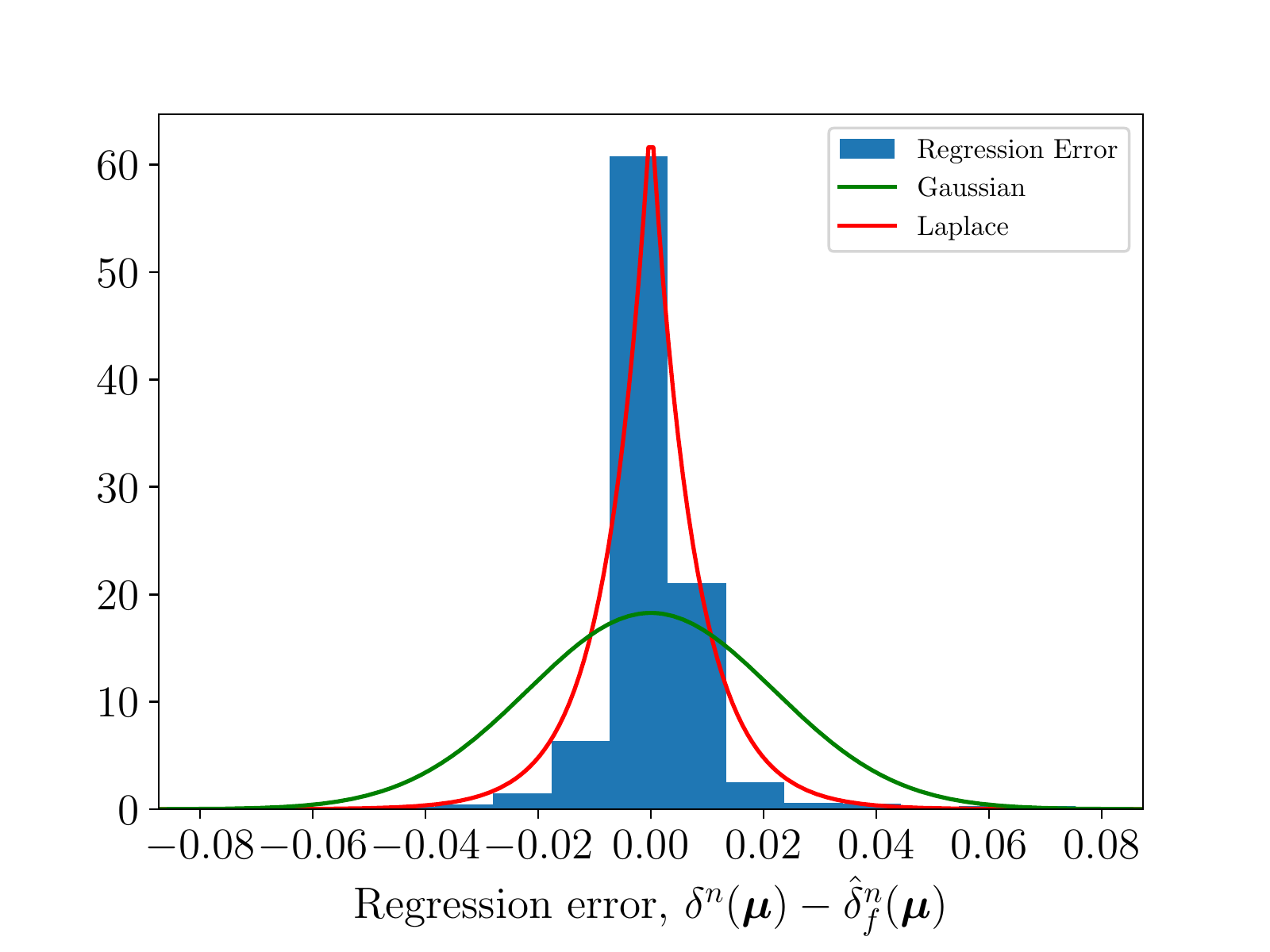}
\caption{QoI Error}
\end{subfigure}
\begin{subfigure}[t]{0.49\textwidth}
\includegraphics[width=1.\linewidth]{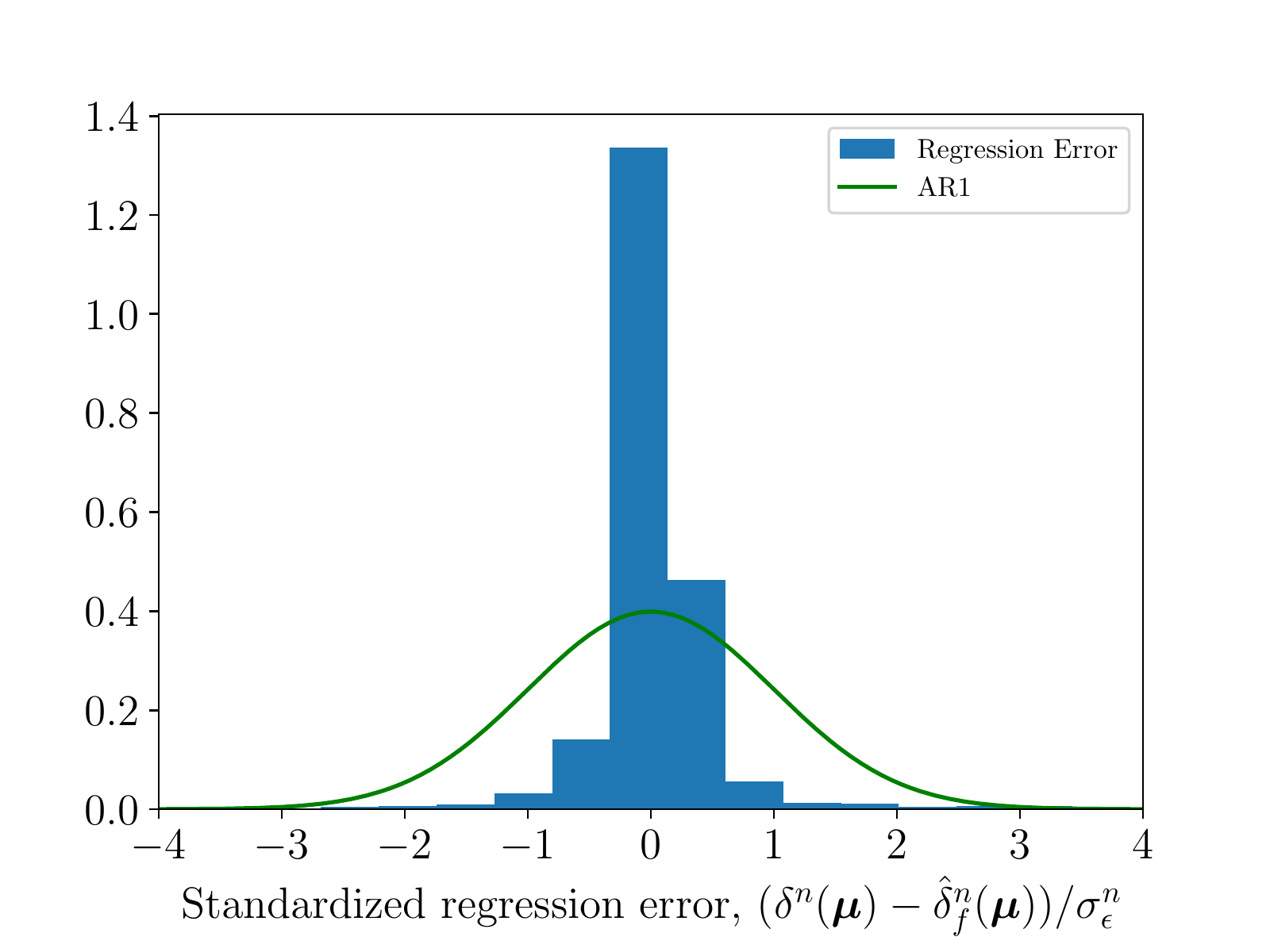}
\caption{Standardized QoI Error}
\end{subfigure}

\end{center}

\caption{\textit{Inviscid Burgers' equation.} Histogram of regression errors on the noise test set, $\testSetNoise$, for the QoI error. On the left, we show absolute errors $(\errorGennmuArg{n} - \errorApproxMeanGennmunArg{n})$, $\parametervec \in \paramtestsetNoise$, $n \in \timeIndexSet$. On the right, we show standardized errors, $(\errorGennmuArg{n} - \errorApproxMeanGennmunArg{n})/\sigma_{\epsilon}^n$, $\parametervec \in \paramtestsetNoise$, $n \in \timeIndexSet$. The true regression errors are compared to the distributions predicted by the Gaussian, Laplacian, and AR1 error models. Regression errors are shown 
for the LSTM deterministic regression function model with the best performing feature-engineering method on the $\card{\paramtrainindex} = 40$ case.
	}
\label{fig:brian_hist}
\end{figure}

\begin{table}[ht]
\begin{center}
	\begin{tabular}{c| c |c |c }
    \hline
    & Gaussian  & Laplacian & AR1 \\
    Prediction interval &  $\qoiError$ &    $\qoiError$ &  $\qoiError$   \\ \hline
 $\omega( 0.68 )$ &  0.943 & 0.810 & 0.933 \\  
 $\omega( 0.95 )$ & 0.970 & 0.946 & 0.959 \\
 $\omega( 0.99 )$ & 0.976 & 0.975 & 0.965\\
    \hline
 K-S Statistic & 0.273 & 0.111 & 0.274\\ \hline
\end{tabular}
\end{center}
\caption{
	\textit{Inviscid Burgers' equation}.
Prediction
	intervals for the various stochastic noise models.
Results correspond to those generated by the
	best performing 
	regression method (i.e., LSTM) with its  best performing feature-engineering method on the
	$\card{\paramtrainindex} = 40$ case, with samples collected over $\testSet$.
	}
\label{tab:brian_hist}
\end{table}

\section{Conclusions}\label{sec:conclude}

This work proposed the \methodName\ (\methodAcronym) method for constructing
error models for approximate solutions to parameterized dynamical systems. The
\methodAcronym\ method extends the machine learning error modeling
(MLEM)~\cite{freno_carlberg_ml} method to dynamical systems. The key contribution of
this work as compared to previous works on error modeling for dynamical
systems is the use of residual-based features and recursive regression methods
(e.g., recurrent neural networks, long short-term memory networks) that employ
latent variables. The use of these latent variables enables these regression
models to model dependence on temporally non-local quantities, which is
essential for modeling errors in this context (as illustrated by classical
approaches for error quantification).

Similar to the MLEM method, the \methodAcronym\ method entails four steps:
feature engineering, data generation, training a deterministic
regression-function model that maps from features to the conditional
expectation of the error, and training a stochastic 
noise model. For feature engineering, we considered a variety of techniques
based on those proposed in Ref.~\cite{freno_carlberg_ml}, along with
several new techniques that include the time variable as a feature. The
majority of these feature-engineering methods employ residual-based quantities
inspired by classic error-quantification approaches. Next, we considered
a hierarchy of regression methods that employ both non-recurrent and recurrent
regression functions as well as linear and nonlinear latent-variable dynamics. These
techniques include classic time-series modeling techniques (e.g., ARX), as
well as time-series modeling techniques emerging from the deep-learning
community (e.g., LSTM).  Lastly, we examined three different stochastic noise
models: Gaussian, Laplacian, and autoregressive Gaussian.

Across three benchmark problems and two different types of approximate
solutions, the numerical experiments illustrated the best overall performance
was obtained using the \methodAcronym\ framework with an LSTM
regression-function model. We attribute this favorable performance to LSTM's
ability to capture dependencies on long-term non-local quantities through its
use of a ``cell state'', as well as its amenability to training.   While the best-performing
feature-engineering method varied across the considered problems, generally
feature-engineering methods that employ residual-based features yielded the
best performance. Lastly, we observed the Laplacian stochastic noise model to
provide the best statistical model for the prediction noise. In
particular, we emphasize that 
the \methodAcronym\ method with an LSTM regression-function model and
residual-based features significantly outperformed both (1) the classical
model-discrepancy method \cite{ken_ohag}, which constructs an error model by applying
Gaussian-process regression in parameter space, and (2) error models
constructed using non-recursive regression models as was proposed in
Ref.~\cite{trehan_ml_error}. 

Future work involves the application of the \methodAcronym\ framework on truly
large-scale problems, as well as in more difficult scenarios, e.g., predicting
errors beyond the training time interval.

\section{Acknowledgements}\label{sec:acknowledge}
E.\ Parish acknowledges an appointment to the Sandia National Laboratories' 
John von Neumann Postdoctoral Research Fellowship in Computational Science. The 
authors thank Brian Freno for providing the data that was used in the third numerical experiment.
This work was partially sponsored by Sandia's Advanced Simulation and Computing (ASC)
Verification and Validation (V\&V) Project/Task \#103723/05.30.02. This paper
describes objective technical results and analysis. Any subjective views or
opinions that might be expressed in the paper do not necessarily represent the
views of the U.S.\ Department of Energy or the United States Government.
Sandia National Laboratories is a multimission laboratory managed and operated
by National Technology and Engineering Solutions of Sandia, LLC., a wholly
owned subsidiary of Honeywell International, Inc., for the U.S.\  Department of
Energy's National Nuclear Security Administration under contract
DE-NA-0003525.

\newpage
\begin{appendices}

\section{Proper orthogonal decomposition}\label{appendix:pod}
	Algorithm \ref{alg:pod} provides the algorithm for computing the POD basis
	used in this work. We note that the basis dimension $K$ can be determined
	from the decay of the singular values; for simplicity, we treat it as an
	algorithm input.
\begin{algorithm}
	\caption{Proper orthogonal decomposition (POD)}
\label{alg:pod}
	\textbf{Input}: Training parameter instances
	$\parameterdomainROM\equiv\{\parametervec_\text{ROM}^i\}\subseteq\parameterdomain$; number of time
	steps between collected snapshots $\nskip\leq\ntimesteps$; 
	reference state $\statevecIntercept(\param)$;
	desired basis
	dimension $K$.\\
	\textbf{Output}: POD basis $\basismat\equiv[\basismatVecArg{1}\ \cdots\
	\basismatVecArg{K}]\in\RRstar{N\times \dimrom}$. \\
	\textbf{Steps}:
\begin{enumerate}
	\item Solve the FOM O$\Delta$E \eqref{eq:fom_discretized} for
		$\parametervec\in\parameterdomainROM$ and collect the snapshot matrix
\begin{equation}\label{eq:snapshotDef}
	\snapshotArg{\parametervec,\nskip}\defeq\left[\statevec^{\nskip}(\parametervec) - 
\statevecIntercept(\parametervec)\ \cdots\ 
	\statevec^{\text{floor}(\ntimesteps/\nskip)\nskip}(\parametervec) - 
\statevecIntercept(\parametervec)\right].
\end{equation} 
\item Compute the (thin) singular value decomposition
\begin{equation} 
	\left[\snapshotArg{\parametervec_\text{ROM}^1,\nskip}\ \cdots\
	\snapshotArg{\parametervec_\text{ROM}^{\ntrainROM},\nskip}\right] = \svdLeft
	\svdMid \svdRight,
\end{equation} 
where
$\svdLeft\equiv\left[\svdLeftVecArg{1}\ \cdots\
\svdLeftVecArg{\ntrainROM\ntimesteps/\nskip}\right]$.
\item Truncate the left singular vectors such that 
$\basismatVecArg{i} = \svdLeftVecArg{i}$, $i=1,\ldots,K$.
\end{enumerate}

\end{algorithm}

\section{Q-sampling}\label{appendix:qsampling}
Feature engineering methods explored in this work that employ either the
	gappy principal components (\ref{feat:params_rgpca}) or
	sampled residual (\ref{feat:params_pr}) require the specification of
	the sampling matrix $\sampMat$. In this work, we compute this sampling matrix 
	via q-sampling~\cite{qdeim_drmac}. 
	Algorithm~\ref{alg:qdeim} provides the associated algorithm.
\begin{algorithm}
\caption{Algorithm for generation of sampling matrix through q-sampling.}
\label{alg:qdeim}
	\textbf{Input}: Residual training set $\residualTrainSet \equiv  \{ \resApproxArg{\tindx} \,|\,
\parametervec \in \parameterdomainTrain \equiv \{\parametervec_i \}_{i=1}^{\card{\paramtrainset} } 
, \tindx \in \timeIndexSetTrain \}$
\newline
	\textbf{Output}: Sampling matrix $\sampMat \in \{0,1\}^{\nsample\times N}$\;
\newline
Steps:
\begin{enumerate}
    \item Compute the residual snapshot matrix:
      $$\residualSnapshotMatrix \defeq \begin{bmatrix} \resApproxArgParam{\timeIndexMapping{1}}{\parametervec_1} & \cdots & 
       \resApproxArgParam{\timeIndexMapping{\ntimestepsCoarse}}{\parametervec_1} &\cdots & \resApproxArgParam{\timeIndexMapping{1}}{\card{\parametervec_{\paramtrainindex}}} & \cdots & \resApproxArgParam{\timeIndexMapping{\ntimestepsCoarse}}{\parametervec_{\card{\paramtrainindex}}}\end{bmatrix}.$$
    \item Compute the column mean of $\residualSnapshotMatrix$
    $$\overline{\resApprox}
\defeq \frac{1}{\card{\residualTrainSet} }
\sum_{\resApprox\in\residualTrainSet}\resApprox$$
    \item Compute the (thin) singular value decomposition:
    \begin{equation*}
     \residualSnapshotMatrix - \overline{\resApprox} \mathbf{e}^T  = \svdLeft \svdMid \svdRight,
    \end{equation*}
    where $\mathbf{e} \in \{1\}^{\card{\residualTrainSet}}$. 
    \item Compute the QR factorization of $\svdLeft^T$ with column pivoting
    \begin{equation*}
			\svdLeft^T [\sampMat^*]^T= \boldsymbol Q \boldsymbol R,
    \end{equation*}
    where $\sampMat^* \equiv \begin{bmatrix} \sampMatvec{1} & \cdots & \sampMatvec{\card{\residualTrainSet}} \end{bmatrix}^T$ , $\sampMatvec{i} \in \{0,1\}^{N}$.
    \item From the permutation matrix $\sampMat^*$, select the first $n_s \le \card{\residualTrainSet}$
			rows to form the sampling matrix
      $$ \sampMat  \defeq \begin{bmatrix} \sampMatvec{1} & \cdots & \sampMatvec{n_s} \end{bmatrix}^T.$$
\end{enumerate}

\end{algorithm}

\section{Gaussian-process regression}\label{appendix:gp}
The numerical experiments consider time-local Gaussian-process regression
models as a benchmark error-modeling method.  This method is analogous to
employing the ``model-discrepancy'' method of Kennedy and
O'Hagan~\cite{ken_ohag} at each time instance.  This approach constructs a
separate GP model to predict the error for each time instnace of interest. As
such, the approach can only be used to make predictions at time instances for
which training data exist, i.e., prediction beyond the training time interval
is not possible.  We note that the use of time-local GP regression was
considered in Ref.~\cite{pagani_enkfrom}, wherein GP models were constructed
in parameter space over time windows. 

We begin by setting
$\parameterdomainTrain\equiv\{\parametervecTrainArg{i}\}_{i=1}^{\nparamTrain}$,
where $\nparamTrain\defeq\card{\parameterdomainTrain}$,
and consider an arbitrary test point $\parametervecTest\in\parameterdomain$.
We define the vector of training responses and the \textit{model} of the test response
at time instance $n\in\timeIndexSet$ as
$$\errorGennAllArg{\tindx}(\parametervecTest) \defeq \begin{bmatrix}
\errorGennTrainArg{\tindx} \\
\errorApproxGennmuArg{\tindx}\end{bmatrix},$$
where
$\errorGennTrainArg{\tindx}\defeq[\errorGennArg{\tindx}(\parametervecTrainArg{1})\
\cdots\ \errorGennArg{\tindx}(\parametervecTrainArg{\nparamTrain})]^T$.
We then assume the data are drawn from the prior distribution
$$\errorGennAllArg{\tindx}(\parametervecTest) \sim \MC{N}(\boldsymbol 0,\gpCovarianceAll + \gpNoise \mathbf{I}),$$
where 
$\gpNoise\in\RR{}_+$ is an additive noise
hyper-parameter and
$\gpCovarianceAll$ is a positive definite
$(\nparamTrain+1)\times(\nparamTrain+1)$
covariance matrix 
of the form
$$
\gpCovarianceAll = \begin{bmatrix}
\gpCovarianceTrain & \gpCovarianceTest^T\\
\gpCovarianceTest & 1
\end{bmatrix},
$$
where we employ radial-basis-function kernels such that
\begin{align}
\begin{split}
&[\gpCovarianceTrain]_{ij} \defeq \exp\frac{
	\norm{\parametervecTrainArg{i} - \parametervecTrainArg{j}}^2
	}{\gpLength^2}, \qquad i,j = 1,\hdots,\nparamTrain
,\\
&
[\gpCovarianceTest]_{1i} = \exp\frac{ \norm{\parametervecTest
- \parametervecTrainArg{i}}^2 }{\gpLength^2}, \quad i
=1,\ldots,\nparamTrain,
\end{split}
\end{align}
Given the training data, the posterior distribution of the model of the test response is
$$
\errorApproxGennmuArg{\tindx}\sim
\normalDistribution \bigg(\gpCovarianceTest( \gpCovarianceTrain + \gpNoise \mathbf{I})^{-1} \errorGennTrainArg{\tindx} ,1 + \gpNoise - \gpCovarianceTest (\gpCovarianceTrain + \gpNoise \mathbf{I})^{-1} ( \gpCovarianceTest )^T \bigg),$$

Comparing with Eq.~\eqref{eq:error_split}, we see that this model corresponds
to the following quantities in the present \methodAcronym\ framework:
\begin{align}
\begin{split}
&\errorApproxMeanGennmunArg{\tindx} = \gpCovarianceTest( \gpCovarianceTrain +
\gpNoise \mathbf{I})^{-1} \errorGennTrainArg{\tindx}\\
&\errorApproxRandGennmunArg{\tindx}  \sim \normalDistribution \bigg(0,1 + \gpNoise - \gpCovarianceTest (\gpCovarianceTrain + \gpNoise \mathbf{I})^{-1} (\gpCovarianceTest)^T \bigg).
\end{split}
\end{align}

This model requires specifying parameters $\gpNoise$ and $\gpLength$. We
compute the
length-scale parameter $\gpLength$ by maximizing the log-marginal
likelihood of the posterior distribution (see Ref.~\cite{gp_book}), while we
treat the noise
parameter $\gpNoise$ as a hyper-parameter that is selected using a validation
procedure with a grid search.

\section{Keras algorithms}
We implement the ANN, ARX, ANN-I, LARX, RNN, and LSTM models using Keras. This appendix
provides the Python code that was used to construct these networks. For ARX and ANN-I, 
only networks corresponding to the RT training formulation are presented.
\subsection{Artificial neural network (ANN)}
\begin{lstlisting}[language=Python,caption=Python code for the initialization
of ANN.,label={listing:NN}]
def build_nn(X,y,hyperparams):
  neurons,depth,reg = hyperparams[0],np.int(hyperparams[1]),hyperparams[2]
  model = Sequential()
  model.add(Dense(neurons,input_shape=(None,np.shape(X)[-1]),activation='relu',use_bias=True,kernel_regularizer=regularizers.l2(10.**reg)))
  for i in range(1,depth):
    model.add(Dense(neurons,activation='relu',use_bias=True,
      kernel_regularizer=regularizers.l2(10**reg)))
  model.add(Dense(np.shape(y)[-1],activation='linear',use_bias=True))
  model.compile(loss='mean_squared_error', optimizer='Adam')
  return model
\end{lstlisting}

\subsection{Autoregressive w/ Exogenous Inputs (ARX)}
\begin{lstlisting}[language=Python,caption=Python code for the initialization of ARX., label={listing:ARX}]
def build_model_arx(X,y,hyperparams):
  neurons,reg = int(hyperparams[0]),np.int(hyperparams[1])
  model = Sequential()
  model.add(SimpleRNN(np.shape(y)[-1],input_shape=(None,np.shape(X)[-1]),return_sequences=True,activation='linear',kernel_regularizer=regularizers.l2(10**reg),recurrent_regularizer=regularizers.l2(10**reg)) )
  model.compile(loss='mean_squared_error', optimizer='Adam')
  return model
\end{lstlisting}

\subsection{Integrated Artificial Neural Network (ANN-I)}
\begin{lstlisting}[language=Python,caption=Python code for the initialization of ANN-I., label={listing:NNI}]
def build_model_nni(X,y,hyperparams):
  neurons,depth,reg = hyperparams[0],np.int(hyperparams[1]),hyperparams[2]
  model = Sequential()
  model.add(Dense(neurons,input_shape=(None,np.shape(X)[-1]),activation='relu',use_bias=True,kernel_regularizer=regularizers.l2(10.**reg)))
  for i in range(1,depth):
    model.add(Dense(neurons,activation='relu',use_bias=True,kernel_regularizer=regularizers.l2(10**reg)))
  model.add(Dense(np.shape(y)[-1],activation='linear',use_bias=True))
  model.add(SimpleRNN(np.shape(y)[-1],return_sequences=True ,activation='linear',use_bias=False,recurrent_initializer='ones',kernel_initializer='ones',name='RNN'))
  model.get_layer('RNN').trainable = False
  model.compile(loss='mean_squared_error', optimizer='Adam')
  return model
\end{lstlisting}

\subsection{Latent Auto-Regressive w/ Exogenous Inputs (LARX)}
\begin{lstlisting}[language=Python,caption=Python code for the initialization
of LARX.,label={listing:LARX}]
def build_model_rnn_linear(X,y,hyperparams):
  neurons,reg = int(hyperparams[0]),hyperparams[1]
  model = Sequential()
  model.add(SimpleRNN(neurons,input_shape=(None,np.shape(X)[-1]),return_sequences=True,activation='linear',kernel_regularizer=regularizers.l2(10**reg),recurrent_regularizer=regularizers.l2(10**reg)) )
  model.add(Dense(np.shape(y)[-1],activation='linear',use_bias=True))
  model.compile(loss='mean_squared_error', optimizer='Adam')
  return model
\end{lstlisting}

\subsection{Recurrent neural network (RNN)}
\begin{lstlisting}[language=Python,caption=Python code for the initialization
of the RNN.,label={listing:RNN}]
def build_rnn(X,y,hyperparams):
  neurons,depth,reg = int(hyperparams[0]),np.int(hyperparams[1]),hyperparams[2]
  model = Sequential()
  model.add(SimpleRNN(neurons,input_shape=(None,np.shape(X)[-1]), 
   return_sequences=True , kernel_regularizer=regularizers.l2(10**reg), 
   recurrent_regularizer=regularizers.l2(10**reg)) )
  for i in range(1,depth):
    model.add(SimpleRNN(neurons,
     return_sequences=True,
     kernel_regularizer=regularizers.l2(10**reg) ,
     recurrent_regularizer=regularizers.l2(10**reg)) )
  model.add(Dense(np.shape(y)[-1],activation='linear',use_bias=True))
  model.compile(loss='mean_squared_error', optimizer='Adam')
  return model
\end{lstlisting}

\subsection{Long short-term memory network (LSTM)}
\begin{lstlisting}[language=Python,caption=Python code for the initialization of LSTM.,label={listing:LSTM}]
def build_lstm(X,y,hyperparams):
  neurons,depth,reg = int(hyperparams[0]),np.int(hyperparams[1]),hyperparams[2]
  model = Sequential()
  model.add(LSTM(neurons,input_shape=(None,np.shape(X)[-1]), return_sequences=True , kernel_regularizer=regularizers.l2(10**reg) , recurrent_regularizer=regularizers.l2(10**reg)) )
  for i in range(1,depth):
    model.add(LSTM(neurons,return_sequences=True,kernel_regularizer=regularizers.l2(10**reg),recurrent_regularizer=regularizers.l2(10**reg)) )
  model.add(Dense(np.shape(y)[-1],activation='linear',use_bias=True))
  model.compile(loss='mean_squared_error', optimizer='Adam')
  return model
\end{lstlisting}

\end{appendices}

\bibliographystyle{siam}
\bibliography{refs}
\end{document}